\title[Stable auto-equivalence]{A generalization of Dugas' construction on stable auto-equivalences for symmetric algebras}
\author{Nengqun Li and Yuming Liu*}
\address{Nengqun Li and Yuming Liu
\newline School of Mathematical Sciences
\newline Laboratory of Mathematics and Complex Systems
\newline Beijing Normal University
\newline Beijing 100875
\newline P.R.China}
\email{ymliu@bnu.edu.cn}
\email{wd0843@163.com}
\date{version of \today}
\newtheorem{Thm}{Theorem}[section]
\newtheorem{Lem}[Thm]{Lemma}
\newtheorem{Def}[Thm]{Definition}
\newtheorem{Cor}[Thm]{Corollary}
\newtheorem{Prop}[Thm]{Proposition}
\newtheorem{Ex1}[Thm]{Example}
\newtheorem{Rem1}[Thm]{Remark}
\newcommand{\lra}{\longrightarrow}
\newcommand{\ra}{\rightarrow}
\newcommand{\sdp}{\times\kern-.2em\vrule height1.1ex depth-.05ex}
\newcommand{\epi}{\lra \kern-.8em\ra}
\begin{document}
\renewcommand{\thefootnote}{\alph{footnote}}
\setcounter{footnote}{-1} \footnote{* Corresponding author.}
\setcounter{footnote}{-1} \footnote{\it{Mathematics Subject
Classification(2020)}: 16G10, 16D50.}
\renewcommand{\thefootnote}{\alph{footnote}}
\setcounter{footnote}{-1} \footnote{ \it{Keywords}: Stable equivalence, Symmetric algebra, Endo-trivial module, Periodic free resolution.}

\maketitle

\begin{abstract}
We give a unified generalization of Dugas' construction on stable auto-equivalences of Morita type from local symmetric algebras to arbitrary symmetric algebras. For group algebras $kP$ of $p$-groups in characteristic $p$, we recover all the stable auto-equivalences corresponding to endo-trivial modules over $kP$ except that $P$ is generalized quaternion of order $2^m$. Moreover, we give many examples of stable auto-equivalences of Morita type for non-local symmetric algebras.
\end{abstract}	

\section{Introduction}

In \cite{Dugas2016}, Dugas gave two methods to construct stable auto-equivalences (of Morita type) for (finite dimensional) local symmetric algebras.  One of particular interests is that such stable auto-equivalences are often not induced by auto-equivalences of the derived category.

The first construction is given as follows.

Let $A$ be an elementary local symmetric $k$-algebra, let $x\in A$ be a nilpotent element. Set $R=k[x]\cong k[X]/(X^m)$ for some integer $m\geq 2$ and $T_A=k\otimes_RA\cong A/xA$. Suppose that $_RA$ and $A_R$ are free modules and that $\underline{\mathrm{End}}_A(T)\cong k[\psi]/(\psi^{2})$, where $\psi$ is an endomorphism of $T$ induced by multiplying some $y\in A$. (As Dugas pointed out that the algebra $\underline{\mathrm{End}}_A(T)$ has a periodic bimodule free resolution of period 2.) Let $C_\mu$ be the kernel of the multiplication map $\mu: A\otimes_RA\rightarrow A$. Then $-\otimes_{A}C_\mu:\underline{\mathrm{mod}}$-$A\rightarrow\underline{\mathrm{mod}}$-$A$ is a stable auto-equivalence of $A$.

Note that $\Omega_{A^e}^{-1}(C_\mu)\cong Cone(\mu)$ in $\underline{\mathrm{mod}}$-$A^e$ and Dugas called the stable auto-equivalence $-\otimes_{A}\Omega_{A^e}^{-1}(C_\mu)$ as a spherical stable twist which is analogous to spherical twist constructed on the derived category by Seidel and Thomas. Under the more general condition $\underline{\mathrm{End}}_A(T)\cong k[\psi]/(\psi^{n+1})$ for some $n\geq 1$, Dugas gave a second construction using a double cone construction, and the induced stable auto-equivalence is called $\mathbb{P}_n$-stable twist since it is analogous to $\mathbb{P}_n$-twist on the derived category of coherent sheaves on a variety by Huybrechts and Thomas.

For group algebras of $p$-groups in characteristic $p$, Dugas recovered many of the stable auto-equivalences corresponding to endo-trivial modules. He also obtained stable auto-equivalences for local algebras of dihedral and semi-dihedral type, which are not group algebras.

In this note, we give a unified generalization of Dugas' construction by greatly relaxing the conditions on both $A$ and $R$ and by adding a new subalgebra $B$ of $A$. The main idea is as follows. For a symmetric $k$-algebra $A$, consider a triple $(A,R,B)$, where $R$, $B$ are subalgebras of $A$ such that $R$ is also symmetric and $B$ (as a $B$-$B$-bimodule) has a periodic free resolution of period $q$. Then, under some commutativity assumptions between $R$, $B$ and $A$, we may construct a complex of left-right projective $A$-$A$-bimodules. Using this complex, we can construct a left-right projective $A$-$A$-bimodule $M_q$ using a multiple cone construction such that the functor $-\otimes_{A}M_q$ induces a stable auto-equivalence of $A$. The main results are Theorem \ref{auto-equivalence} and Theorem \ref{auto-equivalence 2}.

Our construction generalizes Dugas' construction in three ways. Firstly, we dropped the condition that the algebra $A$ is local. Secondly, we don't request the subalgebra $R$ to be local or Nakayama. Thirdly, we use a subalgebra $B$ of $A$ to replace $\underline{\mathrm{End}}_A(T)$ in Dugas' construction, which is more flexible. For a connection between $B$ and $\underline{\mathrm{End}}_A(T)$, we refer to Remark \ref{comparison} below.

For group algebras $kP$ of $p$-groups in characteristic $p$, we recover all the stable auto-equivalences of $kP$ corresponding to endo-trivial modules except that $P$ is generalized quaternion of order $2^m$, see Proposition \ref{generator of T(P)}. Moreover, we can construct many examples of stable auto-equivalences of Morita type (which are not induced by derived equivalences in general) for non-local symmetric algebras, see Section 6.

Our discussion is also related to construct stable equivalences between different algebras. In particular, we will use a method in \cite{LX2007}, which gives a way to construct new stable equivalence between non-Morita equivalent algebras from a given stable auto-equivalence.

This paper is organized as follows. In Section 2, we state some general results on triangulated functors, in particular we recall some results that are useful in establishing that a given triangulated functor is an equivalence. We give the constructions of stable auto-equivalences for (not necessarily local) symmetric algebras in Section 3 and Section 4. We show in Section 5 that our construction recovers all the stable auto-equivalences corresponding to endo-trivial modules over a finite $p$-group algebra $kP$ when $P$ is not generalized quaternion of order $2^m$. In Section 6, we construct various examples of stable auto-equivalences for non-local symmetric algebras.

\section*{Data availability} The datasets generated during the current study are available from the corresponding author on reasonable request.

\section{Preliminary}

Throughout this section, let $k$ be a field and let $\mathscr{T}$ be a Hom-finite triangulated $k$-category with suspension $[1]$. A typical example of this kind of triangulated $k$-category is the stable category $\underline{\mathrm{mod}}$-$A$ of finite-dimensional right $A$-modules, where $A$ is a finite-dimensional self-injective $k$-algebra. Note that the suspension in  $\underline{\mathrm{mod}}$-$A$ is given by the cosyzygy functor $\Omega_{A}^{-1}$ and $\underline{\mathrm{mod}}$-$A$ has a Serre functor $\nu_A\Omega_{A}$, where $\nu_A$ is the Nakayama functor.

We have the following interesting result on triangulated functor.

\begin{Lem}\label{block}
Let $\mathscr{T'}$ and $\mathscr{T}_1$, $\cdots$, $\mathscr{T}_n$ be indecomposable (Hom-finite) Krull-Schmidt triangulated $k$-categories and let $\mathscr{T}=\mathscr{T}_1\times\cdots\times\mathscr{T}_n$. Let $F:\mathscr{T'}\rightarrow \mathscr{T}$ be a fully faithful triangulated functor, which maps some nonzero object $X$ of $\mathscr{T'}$ to an object of $\mathscr{T}_1$. Then the image of $F$ is in $\mathscr{T}_1$.
\end{Lem}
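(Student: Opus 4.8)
The plan is to split $F$ into its components along the product, observe that an indecomposable object of $\mathscr{T'}$ is ``supported'' at exactly one factor $\mathscr{T}_i$, and then use the indecomposability of $\mathscr{T'}$ to force that factor to be $\mathscr{T}_1$ uniformly. For each $i$ let $\pi_i\colon\mathscr{T}\to\mathscr{T}_i$ denote the projection and put $F_i:=\pi_i\circ F$; each $\pi_i$, hence each $F_i$, is triangulated, and $FY$ is identified with the tuple $(F_1Y,\dots,F_nY)$. Since $F$ is fully faithful, $\mathrm{Hom}_{\mathscr{T'}}(Y,Z)\cong\prod_{i=1}^{n}\mathrm{Hom}_{\mathscr{T}_i}(F_iY,F_iZ)$ for all $Y,Z$, and in particular there is a $k$-algebra isomorphism $\mathrm{End}_{\mathscr{T'}}(Y)\cong\prod_{i=1}^{n}\mathrm{End}_{\mathscr{T}_i}(F_iY)$. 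For $Y$ indecomposable the left-hand side is local, and a finite product of rings is local only when exactly one factor is a nonzero local ring and the others vanish; hence exactly one $F_iY$ is nonzero, and I write $\rho(Y)$ for its index. Since $X\neq0$ and $F$ is faithful, $FX\neq0$, and $FX\in\mathscr{T}_1$ gives $F_iX=0$ for $i\neq1$, so every indecomposable summand $Y_0$ of $X$ has $\rho(Y_0)=1$.

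Next I would realize $\mathscr{T'}$ as a product of triangulated categories. Put $\mathscr{U}=\{Y\in\mathscr{T'}\mid F_iY=0\ \text{for all}\ i\neq1\}$ and $\mathscr{V}=\{Y\in\mathscr{T'}\mid F_1Y=0\}$. Because the $F_i$ are triangulated and additive, both $\mathscr{U}$ and $\mathscr{V}$ are thick triangulated subcategories, and $\mathscr{U}\cap\mathscr{V}=0$ by faithfulness. By Krull--Schmidt, any $Y$ decomposes into indecomposables which one sorts according to whether $\rho=1$, giving $Y\cong Y'\oplus Y''$ with $Y'\in\mathscr{U}$ and $Y''\in\mathscr{V}$; moreover the $\mathrm{Hom}$-formula above shows $\mathrm{Hom}_{\mathscr{T'}}(\mathscr{U},\mathscr{V})=0=\mathrm{Hom}_{\mathscr{T'}}(\mathscr{V},\mathscr{U})$ (the $i=1$ factor kills one direction, the factors $i\neq1$ the other). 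Consequently the additive equivalence $\mathscr{U}\times\mathscr{V}\xrightarrow{\ \sim\ }\mathscr{T'}$, $(Y',Y'')\mapsto Y'\oplus Y''$, is triangulated, so $\mathscr{T'}\cong\mathscr{U}\times\mathscr{V}$ as triangulated $k$-categories.

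Finally, as $\mathscr{T'}$ is indecomposable and $0\neq X\in\mathscr{U}$, we must have $\mathscr{V}=0$. Then an indecomposable $Y$ with $\rho(Y)\neq1$ would satisfy $F_1Y=0$, hence lie in $\mathscr{V}=0$, which is impossible; so $\rho\equiv1$ on indecomposables, and by additivity $F_iY=0$ for every object $Y$ and every $i\neq1$, i.e. $FY\in\mathscr{T}_1$ for all $Y$, as claimed. The point requiring the most care is verifying that $\mathscr{U}$ and $\mathscr{V}$ really are triangulated subcategories with the asserted $\mathrm{Hom}$-orthogonality, so that the product decomposition $\mathscr{T'}\cong\mathscr{U}\times\mathscr{V}$ is legitimate and the hypothesis that $\mathscr{T'}$ is indecomposable can be brought to bear; the remainder is routine bookkeeping with the componentwise structure of $\mathscr{T}$.
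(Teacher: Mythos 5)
Your proof is correct and follows essentially the same strategy as the paper's: sort indecomposables according to which factor of $\mathscr{T}$ they land in (the paper deduces this from "fully faithful functors preserve indecomposables" while you use locality of endomorphism rings, which is the same fact), form the two resulting subcategories, establish Hom-orthogonality from full faithfulness, and invoke indecomposability of $\mathscr{T'}$. Your write-up is somewhat more careful than the paper's in checking both directions of the orthogonality and in justifying that the decomposition is one of triangulated categories, but the underlying argument is the same.
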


\begin{proof}
Since $\mathscr{T'}$ and $\mathscr{T}$ are Krull-Schmidt and $F$ is fully faithful, $F$ sends each indecomposable object $Y$ of $\mathscr{T'}$ to an indecomposable object $FY$ of $\mathscr{T}$, therefore $FY\in\mathscr{T}_i$ for some $i$. Let $\mathscr{C}_1$ (resp. $\mathscr{C}_2$) be the full subcategory of $\mathscr{T'}$ which is formed by the objects $Z$ such that $FZ\in\mathscr{T}_1$ (resp. $FZ\in\mathscr{T}_2\times\cdots\times\mathscr{T}_n$). For each object $Z$ of $\mathscr{T'}$, let $Z_i$ be the direct sum of indecomposable summands of $Z$ which belong to $\mathscr{C}_i$, $i=1$, $2$. Then $Z=Z_1\oplus Z_2$ with $Z_i\in\mathscr{C}_i$. For every pair of objects $A_i\in\mathscr{C}_i$ and for each $n\in\mathbb{Z}$, since $FA_1\in\mathscr{T}_1$ and $(FA_2)[n]\in\mathscr{T}_2\times\cdots\times\mathscr{T}_n$, $\mathscr{T'}(A_1,A_2[n])\cong\mathscr{T}(FA_1,(FA_2)[n])=0$. Since $\mathscr{T'}$ is indecomposable, either $\mathscr{C}_1$ or $\mathscr{C}_2$ is zero. Since $0\neq X\in\mathscr{C}_1$, $\mathscr{C}_2$ must be zero. Therefore $\mathscr{C}_1=\mathscr{T'}$.
\end{proof}

\begin{Rem1}\label{block-stable-category}
We will use Lemma \ref{block} in the following situation. Let $A$ be a self-injective $k$-algebra with a decomposition $A=A_1\times\cdots\times A_n$ into indecomposable algebras. Suppose that $M$ is a left-right projective $A$-$A$-bimodule and induces a fully faithful functor $-\otimes_AM: \underline{\mathrm{mod}}$-$A\rightarrow\underline{\mathrm{mod}}$-$A$ on stable category. Suppose that $X$ is a non-projective $A_1$-module such that $X\otimes_AM$ is a $A_i$-module for some $i$. Then $-\otimes_AM$ restricts to a fully faithful functor $\underline{\mathrm{mod}}$-$A_1\rightarrow\underline{\mathrm{mod}}$-$A_i$.
\end{Rem1}

Next we recall from \cite{B1999,Dugas2016} some general results that are useful in establishing that a given triangulated
functor is an equivalence.

Let $\mathscr{T}$ be a triangulated category and let $\mathscr{C}$ be a collection of objects in $\mathscr{T}$. For any $n\in\mathbb{Z}$, define $\mathscr{C}[n]:=\{X[n]\mid X\in\mathscr{C}\}$. Moreover, define ${\mathscr{C}}^{\perp}:=\{Y\in\mathscr{T}\mid \mathscr{T}(X,Y)=0$ for any $X\in\mathscr{C}\}$ and $\prescript{\perp}{}{\mathscr{C}}:=\{Y\in\mathscr{T}\mid \mathscr{T}(Y,X)=0$ for any $X\in\mathscr{C}\}$.

\begin{Def}\label{spanning class and strong spanning class} {\rm(\cite[Definition 2.1]{Dugas2016})}
Let $\mathscr{T}$ be a triangulated category. A collection $\mathscr{C}$ of objects in $\mathscr{T}$ is called a spanning class (resp. strong spanning class) if $(\bigcup_{n\in\mathbb{Z}}\mathscr{C}[n])^{\perp}=0$ and $\prescript{\perp}{}{(\bigcup_{n\in\mathbb{Z}}\mathscr{C}[n])}=0$ (resp. ${\mathscr{C}}^{\perp}=0$ and $\prescript{\perp}{}{\mathscr{C}}=0$).
\end{Def}

\begin{Rem1}
If $\mathscr{T}$ is a triangulated category which has a Serre functor, then for any object $X$ of $\mathscr{T}$, $\mathscr{C}=\{X\}\cup X^{\perp}$ is a strong spanning class of $\mathscr{T}$.
\end{Rem1}

\begin{Prop}\label{fully faithful} {\rm(\cite[Theorem 2.3]{B1999} and \cite[Proposition 2.2]{Dugas2016})}
Let $\mathscr{T}$ and $\mathscr{T}'$ be triangulated categories, and let $F:\mathscr{T}\rightarrow\mathscr{T}'$ be a triangulated functor with a left and a right adjoint. Then $F$ is fully faithful if and only if there exists a strong spanning class $\mathscr{C}$ of $\mathscr{T}$ such that $F$ induces isomorphisms $\mathscr{T}(X,Y[n])\rightarrow\mathscr{T}'(FX,F(Y[n]))$ for any $X$, $Y\in\mathscr{C}$ and for any $n=0$, $1$.
\end{Prop}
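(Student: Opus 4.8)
The plan is to verify the two conditions in \cite[Theorem 2.3]{B1999} directly, namely that $F$ being fully faithful is equivalent to preserving the relevant $\mathrm{Hom}$-groups on a strong spanning class. Since one direction is trivial (if $F$ is fully faithful it certainly induces isomorphisms on all $\mathrm{Hom}$-groups, in particular on those indexed by $\mathscr{C}$ and $n=0,1$), the content is the converse: assuming $F$ induces isomorphisms $\mathscr{T}(X,Y[n])\to\mathscr{T}'(FX,FY[n])$ for $X,Y\in\mathscr{C}$ and $n\in\{0,1\}$, deduce that $F$ is fully faithful.

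First I would fix $X\in\mathscr{C}$ and let $\mathscr{D}_X$ be the full subcategory of $\mathscr{T}$ consisting of those objects $Y$ for which the natural map $\mathscr{T}(X,Y[n])\to\mathscr{T}'(FX,FY[n])$ is an isomorphism for \emph{all} $n\in\mathbb{Z}$. The hypothesis gives $\mathscr{C}\subseteq\mathscr{D}_X$ for the indices $n=0,1$; the key point is to bootstrap from $n\in\{0,1\}$ to all $n\in\mathbb{Z}$ and to close $\mathscr{D}_X$ under the operations generating $\mathscr{T}$. Concretely, $\mathscr{D}_X$ is closed under shifts in both directions (because $F$ is triangulated, so $\mathscr{T}(X,Y[n+1])\cong\mathscr{T}(X,(Y[1])[n])$ and likewise downstairs), it is closed under direct summands, and — using the five lemma applied to the long exact sequences obtained by applying $\mathscr{T}(X,-)$ and $\mathscr{T}'(FX,-)$ to a distinguished triangle and its image — it is a triangulated subcategory. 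Since $\mathscr{C}$ generates enough of $\mathscr{T}$, one shows $\mathscr{D}_X=\mathscr{T}$; the precise mechanism is that the subcategory on which the maps are iso, combined with the right-adjoint of $F$, forces the "defect" to be right-orthogonal to all shifts of $\mathscr{C}$, and the strong spanning condition $\mathscr{C}^\perp=0$ (together with $\prescript{\perp}{}{\mathscr{C}}=0$ on the other side) kills it.

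More precisely, let $G$ be a right adjoint of $F$ and consider the unit $\eta_Y\colon Y\to GFY$. Completing to a triangle $Y\xrightarrow{\eta_Y} GFY\to Z_Y\to Y[1]$, the functor $F$ is fully faithful precisely when $\eta_Y$ is an isomorphism for all $Y$, i.e.\ when $Z_Y=0$ for all $Y$. Applying $\mathscr{T}(X,-)$ for $X\in\mathscr{C}$ and using adjunction $\mathscr{T}(X,GFY[n])\cong\mathscr{T}'(FX,FY[n])$, the hypothesis says that $\mathscr{T}(X,\eta_Y[n])$ is an isomorphism for $n=0,1$ hence (by the shift argument above) for all $n$; so $\mathscr{T}(X,Z_Y[n])=0$ for all $X\in\mathscr{C}$ and all $n$. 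Thus $Z_Y\in(\bigcup_n\mathscr{C}[n])^\perp$. Now the genuinely delicate step is to upgrade from the \emph{spanning}-type vanishing to the \emph{strong} spanning-type vanishing $\mathscr{C}^\perp=0$: one uses the existence of the left adjoint in a symmetric fashion (applying $\mathscr{T}(-,X)$ to the counit triangle, or equivalently working in the opposite category with the left adjoint playing the role of the right adjoint) to conclude that $Z_Y$ also lies in $\prescript{\perp}{}{\mathscr{C}}$, or more directly that $Z_Y$ is both left- and right-orthogonal to $\mathscr{C}$ without shifts, whence $Z_Y=0$ by the definition of strong spanning class. This orthogonality-juggling — keeping careful track of which adjoint gives left- versus right-orthogonality and why $n\in\{0,1\}$ suffices after the shift reduction — is the main obstacle; everything else is the standard five-lemma-and-dévissage bookkeeping. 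I would cite \cite{B1999} for the precise form of this argument and \cite{Dugas2016} for the adaptation to the setting at hand.
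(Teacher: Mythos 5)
The paper does not actually prove this proposition; it is quoted from \cite[Theorem 2.3]{B1999} and \cite[Proposition 2.2]{Dugas2016}. Measured against that standard argument, your sketch assembles the right ingredients (unit/counit, cone, long exact sequence, orthogonality) but goes wrong at exactly the step that carries the content. First, the d\'evissage plan is a dead end: a strong spanning class need not generate $\mathscr{T}$ as a triangulated category, so closure of $\mathscr{D}_X$ under shifts, summands and triangles does not yield $\mathscr{D}_X=\mathscr{T}$. Second, the ``bootstrap from $n\in\{0,1\}$ to all $n$ by the shift argument'' is not a valid move: knowing the comparison maps are isomorphisms for $n=0,1$ says nothing about other shifts. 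The actual mechanism runs in the opposite order: the hypothesis for $n=0,1$ alone, fed into the long exact sequence of $\mathscr{T}(X,-)$ applied to the triangle $Y\xrightarrow{\eta_Y}HFY\to Z_Y\to Y[1]$ (for $X,Y\in\mathscr{C}$, with $H$ the right adjoint), already gives $\mathscr{T}(X,Z_Y)=0$, i.e.\ $Z_Y\in\mathscr{C}^{\perp}=0$ --- this is precisely what ``strong'' buys, since only the \emph{unshifted} orthogonal has to vanish. Hence $\eta_Y$ is an isomorphism for $Y\in\mathscr{C}$, and only \emph{then} does one conclude that $\mathscr{T}(W,X[n])\to\mathscr{T}'(FW,F(X[n]))$ is an isomorphism for all $W\in\mathscr{T}$, all $X\in\mathscr{C}$ and all $n$ (its composite with the adjunction isomorphism is post-composition with the invertible map $\eta_X[n]$). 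Your claim that one must ``upgrade from spanning-type to strong spanning-type vanishing'' and place $Z_Y$ in both orthogonals is backwards: $(\bigcup_n\mathscr{C}[n])^{\perp}\subseteq\mathscr{C}^{\perp}$, and either half of the strong spanning condition alone kills an object lying in it.

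The genuinely open point, which your sketch never resolves, is that the unit argument only applies to $Y\in\mathscr{C}$, whereas full faithfulness requires control over all objects. This is where the left adjoint $G$ enters, and not ``symmetrically on $Z_Y$'': for arbitrary $W$ one completes the counit $\epsilon_W:GFW\to W$ to a triangle with cone $c_W$, observes that $\mathscr{T}(W,X[n])\to\mathscr{T}(GFW,X[n])$ is precomposition with $\epsilon_W$ and is an isomorphism for all $X\in\mathscr{C}$ and all $n$ by the previous step, deduces $\mathscr{T}(c_W,X)=0$ from the contravariant long exact sequence, and concludes $c_W\in\prescript{\perp}{}{\mathscr{C}}=0$. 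Since $F$ is fully faithful if and only if the counit of $G\dashv F$ is invertible, this finishes the proof. Thus the two orthogonality conditions are used in two different places ($\mathscr{C}^{\perp}=0$ for the unit on $\mathscr{C}$, and $\prescript{\perp}{}{\mathscr{C}}=0$ for the counit on all of $\mathscr{T}$), and the passage from $\mathscr{C}$ to all of $\mathscr{T}$ is the gap your proposal leaves open.
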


\begin{Prop}\label{equivalence} {\rm(\cite[Theorem 3.3]{B1999})}
Let $\mathscr{T}$ and $\mathscr{T}'$ be triangulated categories with $\mathscr{T}$ nonzero, $\mathscr{T}'$ indecomposable, and let $F:\mathscr{T}\rightarrow\mathscr{T}'$ be a fully faithful triangulated functor. Then $F$ is an equivalence of categories if and only if $F$ has a left adjoint $G$ and a right adjoint $H$ such that $H(Y)\cong 0$ implies $G(Y)\cong 0$ for any $Y\in\mathscr{T}'$.
\end{Prop}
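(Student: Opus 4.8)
The plan is to analyze the essential image of $F$, call it $\mathscr{D}$ (the full subcategory of objects isomorphic to some $FX$), and to show that under the stated hypothesis $\mathscr{D}$ exhausts $\mathscr{T}'$, so that $F$, being fully faithful and dense, is an equivalence. The ``only if'' direction is immediate: if $F$ is an equivalence then any quasi-inverse is simultaneously a left and a right adjoint, so one may take $G\cong H$, whence $H(Y)\cong 0$ forces $G(Y)\cong 0$.

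For the ``if'' direction, I would first extract the standard consequences of full faithfulness. Since $F$ is fully faithful, the counit $GF\lra\mathrm{id}_{\mathscr{T}}$ of $G\dashv F$ and the unit $\mathrm{id}_{\mathscr{T}}\lra HF$ of $F\dashv H$ are natural isomorphisms. Combined with the adjunction isomorphisms $\mathscr{T}'(FX,Y)\cong\mathscr{T}(X,HY)$ and $\mathscr{T}'(Y,FX)\cong\mathscr{T}(GY,X)$, this gives: $HY\cong 0$ iff $\mathscr{T}'(FX,Y)=0$ for all $X\in\mathscr{T}$, i.e. iff $Y\in\mathscr{D}^{\perp}$; and likewise $GY\cong 0$ iff $Y\in{}^{\perp}\mathscr{D}$. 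So the hypothesis ``$H(Y)\cong 0$ implies $G(Y)\cong 0$'' is precisely the inclusion $\mathscr{D}^{\perp}\subseteq{}^{\perp}\mathscr{D}$. I would also record that $\mathscr{D}$ (taken closed under isomorphism) is a triangulated subcategory of $\mathscr{T}'$, being closed under shifts and, since $F$ is full and triangulated, under cones; that $\mathscr{D}^{\perp}$ is likewise a triangulated subcategory; and that $\mathscr{D}\neq 0$ because $\mathscr{T}$ is nonzero and $F$ is faithful.

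Next I would use the counit $\varepsilon_Y\colon FHY\lra Y$ of $F\dashv H$. For each $Y\in\mathscr{T}'$, embed it in a triangle $FHY\stackrel{\varepsilon_Y}{\lra}Y\lra Z_Y\stackrel{w_Y}{\lra}FHY[1]$. Applying $\mathscr{T}'(FX,-)$ and using that $\varepsilon_Y$ induces an isomorphism $\mathscr{T}'(FX,FHY)\cong\mathscr{T}'(FX,Y)$ (adjunction plus full faithfulness), one gets $Z_Y\in\mathscr{D}^{\perp}$. By the hypothesis $Z_Y\in{}^{\perp}\mathscr{D}$ as well; in particular $w_Y\in\mathscr{T}'(Z_Y,F(HY[1]))=0$, so the triangle splits and $Y\cong FHY\oplus Z_Y$ with $FHY\in\mathscr{D}$ and $Z_Y\in\mathscr{D}^{\perp}$. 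Together with $\mathscr{T}'(\mathscr{D},\mathscr{D}^{\perp})=0$ (by definition of $\mathscr{D}^{\perp}$) and $\mathscr{T}'(\mathscr{D}^{\perp},\mathscr{D})=0$ (the hypothesis again), this exhibits $\mathscr{T}'$ as the product of the triangulated subcategories $\mathscr{D}$ and $\mathscr{D}^{\perp}$. Since $\mathscr{T}'$ is indecomposable and $\mathscr{D}\neq 0$, we conclude $\mathscr{D}^{\perp}=0$; then the splitting gives $Y\cong FHY\in\mathscr{D}$ for every $Y$, so $F$ is dense, hence an equivalence.

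Apart from the routine bookkeeping, the step I expect to require the most care is the passage from the hypothesis to the orthogonality inclusion $\mathscr{D}^{\perp}\subseteq{}^{\perp}\mathscr{D}$, and then verifying that mutual $\mathrm{Hom}$-orthogonality of $\mathscr{D}$ and $\mathscr{D}^{\perp}$ together with the functorial splitting $Y\cong FHY\oplus Z_Y$ genuinely makes $\mathscr{T}'$ a product of triangulated categories — so that indecomposability may be invoked. One must also be attentive to where ``nonzero'' and ``indecomposable'' are used, and that $\mathscr{D}$ is taken closed under isomorphism.
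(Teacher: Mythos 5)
The paper offers no proof of this proposition --- it is quoted verbatim from Bridgeland's article (the cited Theorem 3.3) --- and your argument is a correct reconstruction of Bridgeland's own proof: translating the hypothesis into the orthogonality inclusion $\mathscr{D}^{\perp}\subseteq{}^{\perp}\mathscr{D}$ for the essential image $\mathscr{D}$, splitting the counit triangle $FHY\to Y\to Z_Y$ because the connecting map lands in $\mathscr{D}$, and invoking indecomposability of $\mathscr{T}'$ to kill $\mathscr{D}^{\perp}$. All the individual steps (the counit inducing isomorphisms on $\mathscr{T}'(FX,-)$, closure of $\mathscr{D}$ under cones via fullness, and the splitting criterion for a triangle with vanishing connecting morphism) check out, so there is nothing to correct.
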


Combining Propositions \ref{fully faithful} and \ref{equivalence} we have the following consequence for symmetric algebras (see the definition of a symmetric algebra in Section 3):

\begin{Cor}\label{from strong spanning class to equivalence}
Let $\Lambda$, $\Gamma$ be symmetric $k$-algebras such that $\Lambda$ is not semisimple and $\Gamma$ is indecomposable, and let $M$ be a left-right projective $\Lambda$-$\Gamma$-bimodule. Denote $F$ the stable functor induced by the functor $-\otimes_{\Lambda}M:\mathrm{mod}$-$\Lambda\rightarrow\mathrm{mod}$-$\Gamma$. If there exists a strong spanning class $\mathscr{C}$ of $\underline{\mathrm{mod}}$-$\Lambda$ such that for any $X$, $Y\in\mathscr{C}$ and for any $n=0$, $1$, the homomorphism $F:\underline{\mathrm{Hom}}_{\Lambda}(X,Y[n])\rightarrow\underline{\mathrm{Hom}}_{\Gamma}(FX,F(Y[n]))$ is an isomorphism, then $F$ is an equivalence.
\end{Cor}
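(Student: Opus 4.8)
The plan is to verify the hypotheses of Propositions \ref{fully faithful} and \ref{equivalence} for the stable functor $F$. First I would set $\mathscr{T}=\underline{\mathrm{mod}}$-$\Lambda$ and $\mathscr{T}'=\underline{\mathrm{mod}}$-$\Gamma$, both of which are triangulated $k$-categories (here we use that $\Lambda$ is not semisimple, so $\underline{\mathrm{mod}}$-$\Lambda\neq 0$, and that $\Gamma$ is symmetric, hence self-injective, so $\underline{\mathrm{mod}}$-$\Gamma$ is triangulated and indecomposable since $\Gamma$ is). Since $M$ is left-right projective as a $\Lambda$-$\Gamma$-bimodule, tensoring is exact on the relevant modules and the functor $-\otimes_\Lambda M$ descends to a well-defined triangulated functor $F$ on the stable categories; its left adjoint is $-\otimes_\Gamma \mathrm{Hom}_{\Gamma}(M,\Gamma)$ and, because $\Lambda$ and $\Gamma$ are symmetric, the Nakayama functors are trivial so this same bimodule (or rather $M^\vee=\mathrm{Hom}_\Gamma(M,\Gamma)\cong \mathrm{Hom}_\Lambda(M,\Lambda)$) also provides a right adjoint. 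Thus $F$ has both a left adjoint $G$ and a right adjoint $H$, and here $G\cong H$.

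Next I would apply Proposition \ref{fully faithful}. The hypothesis of the Corollary gives a strong spanning class $\mathscr{C}$ of $\underline{\mathrm{mod}}$-$\Lambda$ on which $F$ induces isomorphisms on $\underline{\mathrm{Hom}}$-spaces in degrees $n=0,1$. Since $F$ has a left and a right adjoint, Proposition \ref{fully faithful} immediately yields that $F$ is fully faithful. (One should remark that $\underline{\mathrm{mod}}$-$\Lambda$ has a Serre functor, namely the syzygy $\Omega_\Lambda$ up to the trivial Nakayama twist, which is what guarantees strong spanning classes exist and makes this criterion applicable; but we simply take the given $\mathscr{C}$.)

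Finally, to upgrade fully faithful to an equivalence I would invoke Proposition \ref{equivalence} with $\mathscr{T}=\underline{\mathrm{mod}}$-$\Lambda$ nonzero and $\mathscr{T}'=\underline{\mathrm{mod}}$-$\Gamma$ indecomposable. We must check: if $H(Y)\cong 0$ in $\underline{\mathrm{mod}}$-$\Lambda$ then $G(Y)\cong 0$, for any $Y\in\underline{\mathrm{mod}}$-$\Gamma$. But since $\Lambda$ and $\Gamma$ are symmetric, we have $G\cong H$ (both are induced by the bimodule dual $M^\vee$, which is again left-right projective), so the implication holds trivially. Hence Proposition \ref{equivalence} applies and $F$ is an equivalence.

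The main point to be careful about — the step I expect to require the most attention — is the identification of the left and right adjoints of $F$ and the verification that they coincide. This rests on the symmetry of both algebras (so that $\nu_\Lambda$ and $\nu_\Gamma$ are isomorphic to the identity on the stable level) and on checking that $M^\vee=\mathrm{Hom}_\Gamma(M,\Gamma)$ is again a left-right projective $\Gamma$-$\Lambda$-bimodule, so that it too descends to a triangulated functor on stable categories. Once $G\cong H$ is established, both remaining inputs (Propositions \ref{fully faithful} and \ref{equivalence}) are applied essentially verbatim, and the conclusion follows.
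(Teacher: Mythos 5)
Your proposal is correct and follows essentially the same route as the paper: identify $-\otimes_{\Gamma}DM$ (equivalently $-\otimes_\Gamma\mathrm{Hom}_\Gamma(M,\Gamma)$ for symmetric algebras) as simultaneously the left and right adjoint of $F$, apply Proposition \ref{fully faithful} with the given strong spanning class to get fully faithfulness, and then apply Proposition \ref{equivalence}, where $G\cong H$ makes the condition $H(Y)\cong 0\Rightarrow G(Y)\cong 0$ automatic. The only cosmetic difference is that the paper cites Dugas' Lemma 3.2 for the adjunction rather than spelling it out.
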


\begin{proof}
Since $\Lambda$, $\Gamma$ are symmetric, by \cite[Lemma 3.2]{Dugas2016}, the functor $-\otimes_{\Gamma}DM:\mathrm{mod}$-$\Gamma\rightarrow\mathrm{mod}$-$\Lambda$ is both the left and the right adjoint of $-\otimes_{\Lambda}M:\mathrm{mod}$-$\Lambda\rightarrow\mathrm{mod}$-$\Gamma$. Therefore the stable functor $G:\underline{\mathrm{mod}}$-$\Gamma\rightarrow\underline{\mathrm{mod}}$-$\Lambda$ induced by $-\otimes_{\Gamma}DM$ is both the left and the right adjoint of $F$. By Proposition \ref{fully faithful}, $F$ is fully faithful. Since $\Lambda$ is not semisimple and $\Gamma$ is indecomposable, $\underline{\mathrm{mod}}$-$\Lambda$ is nonzero and $\underline{\mathrm{mod}}$-$\Gamma$ is indecomposable as a triangulated category. Then it follows from Proposition \ref{equivalence} that $F$ is an equivalence.
\end{proof}

\section{A construction of stable auto-equivalences for symmetric algebras}

In the following, unless otherwise stated, all algebras considered will be
finite dimensional unitary $k$-algebras over a field $k$, and all their modules will be finite dimensional right modules. By a subalgebra $B$ of an algebra $A$, we mean that $B$ is a subalgebra of $A$ with the same identity element.

We denote by $A^e$ the enveloping algebra of $A$, which by definition is $A^{op}\otimes_k A$. We let $D=\mathrm{Hom}_k(-,k)$ be the duality with respect to the ground field $k$.
Recall that an algebra $A$ is symmetric if $A\cong D(A)$ as right $A^e$-modules (or equivalently, as $A$-$A$-bimodules). It is well-known that symmetric algebras are self-injective algebras with identity Nakayama functors.

In this section, we make the following

{\bf Assumption 1:} Let $k$ be a field, $A$ be a symmetric $k$-algebra, $R$ be a non-semisimple symmetric $k$-subalgebra of $A$ such that $A_R$ is projective. Let $B$ be another $k$-subalgebra of $A$, such that the following conditions hold: \\ $(a)$ $br=rb$ for each $b\in B$ and $r\in R$; \\ $(b)$ $B\otimes_{k} (R/radR)\xrightarrow{\phi} (R/radR)\otimes_{R}A$, $b\otimes\overline{1}\mapsto\overline{1}\otimes b$ is an isomorphism in $\underline{\mathrm{mod}}$-$R$; \\ $(c)$ $B$ has a periodic free $B^e$-resolution, that is, there exists an exact sequence
\begin{equation} \label{eq: B-B-complex}
0\rightarrow B\xrightarrow{\delta_q} (B\otimes_{k}B)^{m_{q-1}}\xrightarrow{\delta_{q-1}}\cdots\rightarrow (B\otimes_{k}B)^{m_{1}}\xrightarrow{\delta_1} (B\otimes_{k}B)^{m_{0}}\xrightarrow{\delta_0} B\rightarrow 0
\end{equation}
of $B^e$-modules.

From now on, we fix $(A,R,B)$ as a triple of algebras satisfying Assumption 1.

\begin{Rem1}\label{T is not semisimple}
$(i)$ Let $T_{A}:=(R/radR)\otimes_{R}A_{A}\cong A/(radR)A$. Since $R$ is not semisimple, $R/radR$ is non-projective. Since $B\otimes_{k}(R/radR)\cong T_R$ in $\underline{\mathrm{mod}}$-$R$, $T_R$ is non-projective. Since $A_R$ is projective, $T_A$ is also non-projective. Moreover, it shows that $A$ is not semisimple.\\
$(ii)$ In most examples of this paper, $R$ is a subalgebra of $A$ with the property that $_{R}A_{R}\cong$ $_{R}R_{R}^{n}\oplus(R\otimes R)^{l}$ for some positive integers $n$ and $l$.\\
$(iii)$ The condition $(c)$ implies that $B$ is a self-injective algebra by \cite[Theorem 1.4]{GSS2003}.
\end{Rem1}

\begin{Rem1} \label{comparison}
Since $B\otimes_{k} (R/radR)\xrightarrow{\phi} (R/radR)\otimes_{R}A\cong A/(radR)A$, $b\otimes\overline{1}\mapsto\overline{b}$ is an isomorphism in $\underline{\mathrm{mod}}$-$R$, we have  isomorphisms
\begin{multline}B\otimes_{k}\underline{\mathrm{End}}_{R}(R/radR)\cong \underline{\mathrm{Hom}}_{R}(R/radR,B\otimes_{k}(R/radR))\cong \\ \underline{\mathrm{Hom}}_{R}(R/radR,A/(radR)A)\cong\underline{\mathrm{End}}_{A}(A/(radR)A),
\end{multline}
where the last isomorphism is induced from the adjoint isomorphism given by the adjoint pair $(F,G)$, where $F$ (resp. $G$) is the stable functor $\underline{\mathrm{mod}}$-$R\rightarrow\underline{\mathrm{mod}}$-$A$ (resp. $\underline{\mathrm{mod}}$-$A\rightarrow\underline{\mathrm{mod}}$-$R$) induced from the induction functor $-\otimes_{R}A$ (resp. restriction functor $-\otimes_{A}A_{R}$). Moreover, it can be shown that the composition of these isomorphisms is a $k$-algebra isomorphism from $B\otimes_{k}\underline{\mathrm{End}}_{R}(R/radR)$ to $\underline{\mathrm{End}}_{A}(A/(radR)A)$. Especially, if $R$ is an elementary local symmetric $k$-algebra, then our subalgebra $B$ is isomorphic to $\underline{\mathrm{End}}_{A}(T)=\underline{\mathrm{End}}_{A}(A/(radR)A)$, which give the connection between our construction and Dugas' construction.
\end{Rem1}

\begin{Rem1}
Since $A$ is symmetric, $\prescript{}{A}{A}$ is isomorphic to $D(A_{A})$ as $A$-modules, and $\prescript{}{R}{A}$ is isomorphic to $D(A_{R})$ as $R$-modules. Since $A_R$ is projective and $R$ is self-injective, $A_R$ is injective and therefore $\prescript{}{R}{A}\cong D(A_R)$ is projective.
\end{Rem1}

\medskip
Let $\mathrm{lrp}(A)$ be the category of left-right projective $A$-$A$-bimodules, and let $\underline{\mathrm{lrp}}(A)$ be the stable category of $\mathrm{lrp}(A)$ obtained by factoring out the morphisms that factor through a projective $A^e$-module. Since $A^e$ is self-injective (even symmetric), $\underline{\mathrm{lrp}}(A)$ becomes a triangulated category. Let $\mathrm{sum}$-$B^e$ be the full subcategory of $\mathrm{mod}$-$B^e$ consists of finite direct sum of copies of $B\otimes_{k}B$. For each $B^e$-module homomorphism $f:B\otimes_{k}B\rightarrow B\otimes_{k}B$, $1\otimes 1\mapsto \sum b_i\otimes b'_i$, applies the functor $A\otimes_{B}-\otimes_{B}A$, we have an $A^e$-homomorphism $\widetilde{f}:A\otimes_{k}A\rightarrow A\otimes_{k}A$, $1\otimes 1\mapsto \sum b_i\otimes b'_i$. Since $\widetilde{f}$ is induced from a $B^e$-homomorphism and the elements of $B$ commute with the elements of $R$ under multiplication, $\widetilde{f}$ induces an $A^e$-homomorphism $H(f):A\otimes_{R}A\rightarrow A\otimes_{R}A$, which makes the diagram
$$\xymatrix{
B\otimes_{k}B \ar[r]^{f}\ar[d] & B\otimes_{k}B \ar[d] \\
A\otimes_{R}A \ar[r]^{H(f)} & A\otimes_{R}A \\
}$$
commutes. In general, for each $B^e$-homomorphism $f:(B\otimes_{k}B)^n\rightarrow (B\otimes_{k}B)^m$ in $\mathrm{sum}$-$B^e$, let $H(f)$ be the unique $A^e$-homomorphism $(A\otimes_{R}A)^n\rightarrow (A\otimes_{R}A)^m$ such that the diagram
$$\xymatrix{
(B\otimes_{k}B)^n \ar[r]^{f}\ar[d] & (B\otimes_{k}B)^m \ar[d] \\
(A\otimes_{R}A)^n \ar[r]^{H(f)} & (A\otimes_{R}A)^m \\
}$$
commutes, where the vertical morphisms are the obvious morphisms. Then we have defined a functor $H:\mathrm{sum}$-$B^e\rightarrow\mathrm{lrp}(A)$.

Applying $H$ to the complex $(B\otimes_{k}B)^{m_{q-1}}\xrightarrow{\delta_{q-1}}\cdots\rightarrow (B\otimes_{k}B)^{m_{1}}\xrightarrow{\delta_{1}} (B\otimes_{k}B)^{m_{0}}$ in Equation (\ref{eq: B-B-complex}) we get a complex $$(A\otimes_{R}A)^{m_{q-1}}\xrightarrow{d_{q-1}}\cdots\rightarrow (A\otimes_{R}A)^{m_{1}}\xrightarrow{d_1} (A\otimes_{R}A)^{m_{0}}.$$
 Let $\widetilde{d_0}$ be the composition $(A\otimes_{k}A)^{m_{0}}\xrightarrow{A\otimes_{B}\delta_0\otimes_{B}A}A\otimes_{B}A\xrightarrow{\mu}A$, where $\mu$ is the morphism given by multiplication. Since the elements of $B$ commute with the elements of $R$ under multiplication, $\widetilde{d_0}$ induces an $A^e$-homomorphism $(A\otimes_{R}A)^{m_{0}}\xrightarrow{d_0} A$. It can be shown that $d_0d_1=0$, so the sequence
\begin{equation}\label{eq: A-A-complex}
(A\otimes_{R}A)^{m_{q-1}}\xrightarrow{d_{q-1}}\cdots\rightarrow (A\otimes_{R}A)^{m_{1}}\xrightarrow{d_{1}} (A\otimes_{R}A)^{m_{0}}\xrightarrow{d_{0}} A
\end{equation}
 is again a complex.

\begin{Lem}\label{from complex to triangles}
There exist triangles
$$M_1 \xrightarrow{\underline{i_1}}(A\otimes_{R}A)^{m_{0}}\xrightarrow{\underline{d_{0}}} A\rightarrow,$$
$$M_2 \xrightarrow{\underline{i_2}}(A\otimes_{R}A)^{m_{1}}\xrightarrow{\underline{f_{1}}} M_1 \rightarrow,$$
$$\cdots,$$
$$M_q \xrightarrow{\underline{i_q}}(A\otimes_{R}A)^{m_{q-1}}\xrightarrow{\underline{f_{q-1}}} M_{q-1} \rightarrow$$
in the triangulated category $\underline{\mathrm{lrp}}(A)$ such that $i_p f_p =d_p$ for $1\leq p\leq q-1$.
\end{Lem}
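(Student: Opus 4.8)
The plan is to build the triangles one at a time, keeping everything at the level of actual $A^e$-module homomorphisms for as long as possible and only passing to $\underline{\mathrm{lrp}}(A)$ at the very end. The first point to record is that $d_0\colon (A\otimes_R A)^{m_0}\to A$ is an epimorphism: $\delta_0$ is onto (it is the augmentation of the resolution (\ref{eq: B-B-complex})), hence so is $A\otimes_B\delta_0\otimes_B A$, and $\mu$ is obviously onto, so $\widetilde{d_0}$ is onto; since $\widetilde{d_0}$ factors as $(A\otimes_k A)^{m_0}\twoheadrightarrow (A\otimes_R A)^{m_0}\xrightarrow{d_0}A$, the map $d_0$ is onto as well. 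Thus $M_1:=\ker d_0$ is defined, and in the short exact sequence $0\to M_1\xrightarrow{i_1}(A\otimes_R A)^{m_0}\xrightarrow{d_0}A\to 0$ of $A^e$-modules one has $M_1\in\mathrm{lrp}(A)$: restricting to one-sided modules the sequence splits, because $_AA$ and $A_A$ are free, so $_AM_1$ and $(M_1)_A$ are projective. This short exact sequence represents the first triangle.

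For the inductive step I would carry along, for $p\ge 1$: an object $M_p\in\mathrm{lrp}(A)$, an honest monomorphism $\iota_p\colon M_p\hookrightarrow (A\otimes_R A)^{m_{p-1}}\oplus Q_{p-1}$ with $Q_{p-1}$ a projective $A^e$-module (for $p=1$ take $Q_0=0$, $\iota_1=i_1$), and, once $p\ge 2$, an honest map $f_{p-1}\colon (A\otimes_R A)^{m_{p-1}}\to M_{p-1}$ with $\iota_{p-1}\circ f_{p-1}=(d_{p-1},0)$. Given these, $\iota_p$ fits in a short exact sequence $0\to M_p\xrightarrow{\iota_p}(A\otimes_R A)^{m_{p-1}}\oplus Q_{p-1}\xrightarrow{\bar f_{p-1}}M_{p-1}\to 0$ with $\bar f_{p-1}=(f_{p-1},g_{p-1})$ for a chosen surjection $g_{p-1}\colon Q_{p-1}\twoheadrightarrow M_{p-1}$ (for $p=1$ this is the sequence above, with $M_0:=A$, $\bar f_0:=d_0$). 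Since $A^e$ is self-injective, $Q_{p-1}$ is projective-injective, so $(A\otimes_R A)^{m_{p-1}}\oplus Q_{p-1}\cong (A\otimes_R A)^{m_{p-1}}$ in $\underline{\mathrm{lrp}}(A)$, and the sequence represents the $p$-th triangle of the statement, with $\underline{i_p}$ the composite of $\underline{\iota_p}$ with the projection. To continue, I need an honest $f_p\colon (A\otimes_R A)^{m_p}\to M_p$ with $\iota_p\circ f_p=(d_p,0)$; the key computation is that $\bar f_{p-1}\circ(d_p,0)=f_{p-1}\circ d_p$ and that $\iota_{p-1}\circ(f_{p-1}\circ d_p)=(d_{p-1},0)\circ d_p=(d_{p-1}d_p,0)=0$ because (\ref{eq: A-A-complex}) is a complex; as $\iota_{p-1}$ is an honest monomorphism this forces $f_{p-1}\circ d_p=0$, so $(d_p,0)$ factors through $\ker\bar f_{p-1}=M_p$ and corestricts to the required $f_p$ (for $p=1$ this is just the identity $d_0 d_1=0$). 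Finally pick a projective $A^e$-module $Q_p$ with a surjection $g_p\colon Q_p\twoheadrightarrow M_p$, set $\bar f_p:=(f_p,g_p)\colon (A\otimes_R A)^{m_p}\oplus Q_p\twoheadrightarrow M_p$, put $M_{p+1}:=\ker\bar f_p$, and let $\iota_{p+1}$ be the inclusion; once again $M_{p+1}\in\mathrm{lrp}(A)$ by the one-sided splitting argument, using $M_p\in\mathrm{lrp}(A)$. Running this from $p=1$ to $p=q$ produces all $q$ triangles, and the honest identities $\iota_p\circ f_p=(d_p,0)$ become, after identifying $(A\otimes_R A)^{m_{p-1}}\oplus Q_{p-1}$ with $(A\otimes_R A)^{m_{p-1}}$ in $\underline{\mathrm{lrp}}(A)$, the required identities $\underline{i_p}\,\underline{f_p}=\underline{d_p}$ for $1\le p\le q-1$.

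The main obstacle is precisely the cancellation $f_{p-1}\circ d_p=0$. Working in $\underline{\mathrm{lrp}}(A)$ alone, knowing $\underline{i_{p-1}}\,\underline{f_{p-1}}\,\underline{d_p}=0$ does not give $\underline{f_{p-1}}\,\underline{d_p}=0$, since $\underline{i_{p-1}}$ need not be a monomorphism there; so the cancellation has to be carried out at the level of honest $A^e$-modules, which is why the construction is arranged to keep each $\iota_{p-1}$ an honest monomorphism, and this in turn forces the auxiliary projective summands $Q_{p-1}$ (the maps $f_{p-1}$ produced from (\ref{eq: A-A-complex}) need not be surjective onto $M_{p-1}$, so one cannot simply take $M_p$ to be a submodule of $(A\otimes_R A)^{m_{p-1}}$). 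The remaining ingredients are routine: that every $M_p$ lies in $\mathrm{lrp}(A)$ (one-sided splitting, using that $A$ is symmetric), and that a short exact sequence of objects of $\mathrm{lrp}(A)$ represents a triangle in the triangulated subcategory $\underline{\mathrm{lrp}}(A)$ of $\underline{\mathrm{mod}}$-$A^e$.
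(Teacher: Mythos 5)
Your proposal is correct and follows essentially the same route as the paper: take $M_1=\ker d_0$, then inductively define $M_{p+1}$ as the kernel of a surjection $(f_p,g_p)\colon (A\otimes_RA)^{m_p}\oplus Q_p\to M_p$ with $Q_p$ projective, and deduce $f_{p-1}d_p=0$ from $d_{p-1}d_p=0$ by cancelling the honest monomorphism $\iota_{p-1}$ — exactly the paper's mechanism (the paper uses the projective cover $P_p\to M_p$ where you use an arbitrary projective surjection, which is immaterial). Your explicit verification that $d_0$ is surjective and that each $M_p$ lies in $\mathrm{lrp}(A)$ fills in details the paper leaves implicit.
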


\begin{proof}
Let $i_1:M_1\rightarrow(A\otimes_{R}A)^{m_{0}}$ be the kernel of $d_0:(A\otimes_{R}A)^{m_{0}}\rightarrow A$. Since $d_0$ is surjective, $0\rightarrow M_1\xrightarrow{i_1}(A\otimes_{R}A)^{m_{0}}\xrightarrow{d_0} A\rightarrow 0$ is an exact sequence, which induces a triangle $M_1 \xrightarrow{\underline{i_1}}(A\otimes_{R}A)^{m_{0}}\xrightarrow{\underline{d_{0}}} A\rightarrow$ in $\underline{\mathrm{lrp}}(A)$. Since $d_0 d_1=0$, there exists a morphism $f_1:(A\otimes_{R}A)^{m_{1}}\rightarrow M_1$ such that $d_1=i_1 f_1$. Let $v_1:P_1\rightarrow M_1$ be the projective cover of $M_1$ as an $A^e$-module, and let
\begin{math}
\left[ \begin{smallmatrix}
i_2 \\ u_1
\end{smallmatrix} \right] :M_2\rightarrow (A\otimes_{R}A)^{m_{1}}\oplus P_1
\end{math}
be the kernel of
\begin{math}
\left[ \begin{smallmatrix}
f_1 & v_1
\end{smallmatrix} \right] :(A\otimes_{R}A)^{m_{1}}\oplus P_1\rightarrow M_1
\end{math}.
Since the morphism
\begin{math}
\left[ \begin{smallmatrix}
f_1 & v_1
\end{smallmatrix} \right]
\end{math}
is surjective, the short exact sequence
\begin{math}
0\rightarrow M_2\xrightarrow{\left[ \begin{smallmatrix}
i_2 \\ u_1
\end{smallmatrix} \right]} (A\otimes_{R}A)^{m_{1}}\oplus P_1\xrightarrow{\left[ \begin{smallmatrix}
f_1 & v_1
\end{smallmatrix} \right]} M_1\rightarrow 0
\end{math}
induces a triangle $M_2 \xrightarrow{\underline{i_2}}(A\otimes_{R}A)^{m_{1}}\xrightarrow{\underline{f_{1}}} M_1 \rightarrow$ in $\underline{\mathrm{lrp}}(A)$.
Since $i_1 f_1 d_2=d_1 d_2=0$ and $i_1$ is injective, $f_1 d_2=0$. Since the morphism
\begin{math}
\left[ \begin{smallmatrix}
d_2 \\ 0
\end{smallmatrix} \right]: (A\otimes_{R}A)^{m_{2}}\rightarrow (A\otimes_{R}A)^{m_{1}}\oplus P_1
\end{math}
satisfies
\begin{math}
\left[ \begin{smallmatrix}
f_1 & v_1
\end{smallmatrix} \right]\left[ \begin{smallmatrix}
d_2 \\ 0
\end{smallmatrix} \right]=f_1 d_2=0
\end{math},
there exists a morphism $f_2:(A\otimes_{R}A)^{m_{2}}\rightarrow M_2$ such that $d_2=i_2 f_2$ and $u_1 f_2=0$.

Using the same method, we can construct morphisms $i_p:M_p \rightarrow (A\otimes_{R}A)^{m_{p-1}}$ for $1\leq p\leq q$, and morphisms $f_{p'}:(A\otimes_{R}A)^{m_{p'}}\rightarrow M_{p'}$, $u_{p'}:M_{p'+1}\rightarrow P_{p'}$, $v_{p'}:P_{p'}\rightarrow M_{p'}$ for $1\leq p'\leq q-1$ with $P_{p'}$ projective as $A^e$-modules, such that the following conditions hold: \\ $(i)$ $i_p f_p =d_p$ for $1\leq p\leq q-1$; \\ $(ii)$ $u_p f_{p+1}=0$ for $1\leq p\leq q-2$; \\ $(iii)$ $0\rightarrow M_1\xrightarrow{i_1}(A\otimes_{R}A)^{m_{0}}\xrightarrow{d_0} A\rightarrow 0$ and
\begin{math}
0\rightarrow M_{p+1}\xrightarrow{\left[ \begin{smallmatrix}
i_{p+1} \\ u_p
\end{smallmatrix} \right]} (A\otimes_{R}A)^{m_{p}}\oplus P_p\xrightarrow{\left[ \begin{smallmatrix}
f_p & v_p
\end{smallmatrix} \right]} M_p\rightarrow 0
\end{math}
are short exact sequences for $1\leq p\leq q-1$.

Since each $P_p$ is a projective $A^e$-module, these short exact sequences induce triangles
$$M_1 \xrightarrow{\underline{i_1}}(A\otimes_{R}A)^{m_{0}}\xrightarrow{\underline{d_{0}}} A\rightarrow,$$
$$M_2 \xrightarrow{\underline{i_2}}(A\otimes_{R}A)^{m_{1}}\xrightarrow{\underline{f_{1}}} M_1 \rightarrow,$$
$$\cdots,$$
$$M_q \xrightarrow{\underline{i_q}}(A\otimes_{R}A)^{m_{q-1}}\xrightarrow{\underline{f_{q-1}}} M_{q-1} \rightarrow$$
in $\underline{\mathrm{lrp}}(A)$.
\end{proof}

\begin{Thm}\label{auto-equivalence}
Let $(A,R,B)$ be the triple that satisfies Assumption 1. If $M_q$ is the $A$-$A$-bimodule defined in Lemma \ref{from complex to triangles}, then $-\otimes_{A}M_q:\underline{\mathrm{mod}}$-$A\rightarrow\underline{\mathrm{mod}}$-$A$ is a stable auto-equivalence of $A$.
\end{Thm}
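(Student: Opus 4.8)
The plan is to verify the hypotheses of Corollary \ref{from strong spanning class to equivalence} with $\Lambda=\Gamma=A$ and $M=M_q$. First I would check that $M_q$ is a left-right projective $A$-$A$-bimodule: by construction it sits in a sequence of triangles in $\underline{\mathrm{lrp}}(A)$, so $M_q$ is an object of $\mathrm{lrp}(A)$, hence the functor $-\otimes_A M_q$ makes sense on stable categories and has $-\otimes_A DM_q$ as a two-sided adjoint since $A$ is symmetric. Since $A$ is not semisimple (Remark \ref{T is not semisimple}) it suffices, by Corollary \ref{from strong spanning class to equivalence} applied to each indecomposable block of $A$ together with Lemma \ref{block} / Remark \ref{block-stable-category}, to exhibit a strong spanning class $\mathscr{C}$ of $\underline{\mathrm{mod}}$-$A$ on which $F:=-\otimes_A M_q$ induces isomorphisms on $\underline{\mathrm{Hom}}(X,Y[n])$ for $n=0,1$.

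Second, I would choose the spanning class. The natural candidate is $\mathscr{C}=\{T\}\cup T^{\perp}$ where $T=T_A=A/(\mathrm{rad}R)A$, which is a strong spanning class because $\underline{\mathrm{mod}}$-$A$ has a Serre functor (it is the identity up to syzygy shift since $A$ is symmetric), by the Remark following Definition \ref{spanning class and strong spanning class}. For objects $Y\in T^{\perp}$ one expects $F$ to act essentially as a syzygy shift: the complex (\ref{eq: A-A-complex}) has all middle terms $(A\otimes_R A)^{m_i}$ which, tensored with such $Y$, become projective because $Y\otimes_A(A\otimes_R A)=Y\otimes_R A$ and the relevant Ext-vanishing (encoded by $Y\in T^\perp$, using condition $(b)$ relating $B\otimes_k(R/\mathrm{rad}R)$ to $T_R$) forces $Y_R$ to be projective, hence $Y\otimes_R A$ projective over $A$. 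Then the triangles of Lemma \ref{from complex to triangles}, tensored with $Y$, collapse to show $Y\otimes_A M_q\cong \Omega_A^{q}(Y)$ (up to the shift conventions), so $F$ restricted to $T^{\perp}$ is isomorphic to $\Omega_A^{q}$, which is an equivalence onto its image and in particular induces the required Hom-isomorphisms among objects of $T^{\perp}$; one then checks $F(T^\perp)\subseteq (FT)^\perp$ and handles the mixed pairs $\underline{\mathrm{Hom}}(T,Y[n])$ and $\underline{\mathrm{Hom}}(Y,T[n])$ using adjunction and the explicit triangles.

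Third, the case $X=Y=T$: I would compute $T\otimes_A M_q$ and the induced map on $\underline{\mathrm{End}}_A(T)$ and on $\underline{\mathrm{Hom}}_A(T,T[1])$. Here one uses Remark \ref{comparison}: $\underline{\mathrm{End}}_A(T)\cong B\otimes_k\underline{\mathrm{End}}_R(R/\mathrm{rad}R)$, and the periodic resolution (\ref{eq: B-B-complex}) of $B$ has period $q$. Tensoring the triangles of Lemma \ref{from complex to triangles} with $T$ and using $T\otimes_A(A\otimes_R A)=T\otimes_R A$ together with $\phi$ from condition $(b)$, the complex $T\otimes_A(\ref{eq: A-A-complex})$ should be identified (in the stable category, via $\phi$) with $B\otimes_B(\text{resolution }(\ref{eq: B-B-complex}))\otimes_B T'$ for the appropriate module, i.e. with a complex computing $\Omega^{q}$ over $B$-ish data; periodicity of period $q$ then gives $T\otimes_A M_q\cong T$ up to a shift that matches, and one tracks that $F$ acts on $\underline{\mathrm{End}}_A(T)$ and on $\underline{\mathrm{Hom}}_A(T,\Omega_A^{-1}T)$ as an isomorphism.

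The main obstacle I expect is precisely this last point: proving that $F$ induces an isomorphism on $\underline{\mathrm{Hom}}_A(T,T[n])$ for $n=0,1$. This requires genuinely understanding how the multiple-cone object $M_q$ interacts with $T$ — unwinding the iterated triangles of Lemma \ref{from complex to triangles} after applying $T\otimes_A-$, identifying the resulting complex with the periodic $B^e$-resolution via the isomorphism $\phi$ of Assumption 1$(b)$ and the algebra isomorphism of Remark \ref{comparison}, and checking that the connecting maps are the expected ones so that period-$q$ periodicity yields both the isomorphism $T\otimes_AM_q\cong \Omega_A^{?}T$ and the correct action on self-extensions. Everything else (adjunction, the $T^\perp$ computation, reduction to blocks) is comparatively routine once this is in place.
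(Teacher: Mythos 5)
Your proposal is correct and follows essentially the same route as the paper: the strong spanning class $\mathscr{C}=\{T\}\cup T^{\perp}$, the identification of $-\otimes_A M_q$ with $\Omega_A^{q}$ on $T^{\perp}$ (where the paper also observes both mixed Hom-spaces simply vanish), the identification of $T\otimes_A(\ref{eq: A-A-complex})$ with the split-exact complex obtained from the periodic $B^e$-resolution via $\phi$ to get $T\otimes_A M_q\cong T$, and reduction to blocks via Lemma \ref{block} in the decomposable case. The ``main obstacle'' you flag is exactly where the paper spends its effort (the commutativity checks in Figures 1--6, done by writing $\delta_q$ componentwise and using that $\delta_q^i$ is a $B^e$-map together with the commutation of $B$ with $R$), and your outline of how to resolve it is the right one.
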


\begin{proof}
Let $F=-\otimes_{R}A_{A}$ and $G=-\otimes_{A}A_{R}$ be the induction and the restriction functors respectively. Since $A$ and $R$ are symmetric and $\prescript{}{R}{A}_{A}$ is left-right projective, both $(F,G)$ and $(G,F)$ are adjoint pairs. Since both $F$ and $G$ map projectives to projectives, they induce stable functors (which are also denoted by $F$ and $G$). Moreover, $G$ is both the left and the right adjoint of $F$ as stable functors. Let $T_{A}=F(R/radR)=(R/radR)\otimes_{R}A_{A}\cong A/(radR)A$. According to Remark \ref{T is not semisimple}, $T_A$ is a nonzero object in $\underline{\mathrm{mod}}$-$A$. Since the elements of $B$ commute with the elements of $R$ under multiplication, $T\cong A/(radR)A$ becomes a $B$-$A$-bimodule.

Under the above notations, we now prove that $-\otimes_{A}M_q:\underline{\mathrm{mod}}$-$A\rightarrow\underline{\mathrm{mod}}$-$A$ is a stable auto-equivalence of $A$. We will consider two cases.

\medskip
{\it Case 1: Assume that $A$ (as an algebra) is indecomposable.}

Choose a strong spanning class $\mathscr{C}=\{T\}\cup T^{\perp}$ of $\underline{\mathrm{mod}}$-$A$, where $T^{\perp}=\{X\in\underline{\mathrm{mod}}$-$A\mid \underline{\mathrm{Hom}}_{A}(T,X)=0\}$. According to Corollary \ref{from strong spanning class to equivalence}, it suffices to show that $-\otimes_{A}M_q$ induces bijections between $\underline{\mathrm{Hom}}_{A}(X,Y[i])$ and $\underline{\mathrm{Hom}}_{A}(X\otimes_{A}M_q,(Y[i])\otimes_{A}M_q)$ for all $X$, $Y\in\mathscr{C}$ and for all $i=0$, $1$. We will divide the proof of Case 1 into four steps.

\medskip
{\it Step 1.1: To show that $-\otimes_{A}M_q$ induces a bijection between $\underline{\mathrm{Hom}}_{A}(T,T)$ and $\underline{\mathrm{Hom}}_{A}(T\otimes_{A}M_q,T\otimes_{A}M_q)$.}

Since $\phi:B\otimes_{k} (R/radR)\rightarrow A/(radR)A$, $b\otimes\overline{1}\mapsto\overline{b}$ is an isomorphism in $\underline{\mathrm{mod}}$-$R$, $\phi\otimes 1:B\otimes_{k}T\cong B\otimes_{k} (R/radR)\otimes_{R}A\rightarrow A/(radR)A\otimes_{R}A=T\otimes_{R}A$ is an isomorphism in $\underline{\mathrm{mod}}$-$A$. Applying the functors $-\otimes_{B}T_{A}$ and $T\otimes_{A}-$ to the complex $0\rightarrow B\xrightarrow{\delta_q} (B\otimes_{k}B)^{m_{q-1}}\xrightarrow{\delta_{q-1}}\cdots\rightarrow (B\otimes_{k}B)^{m_{1}}\xrightarrow{\delta_1} (B\otimes_{k}B)^{m_{0}}\xrightarrow{\delta_0} B\rightarrow 0$ and the complex $(A\otimes_{R}A)^{m_{q-1}}\xrightarrow{d_{q-1}}\cdots\rightarrow (A\otimes_{R}A)^{m_{1}}\xrightarrow{d_{1}} (A\otimes_{R}A)^{m_{0}}\xrightarrow{d_{0}} A$ respectively, we get a commutative diagram in $\mathrm{mod}$-$A$:

$$\xymatrix@R=2pc@C=2pc@W=0mm{
 0 \ar[r] & T \ar[r]^(0.3){\delta_{q}\otimes 1} & (B\otimes_{k}T)^{m_{q-1}}\ar[r]^(0.7){\delta_{q-1}\otimes 1}\ar[d]_{(\phi\otimes 1)^{m_{q-1}}} & \cdots\ar[r] & (B\otimes_{k}T)^{m_{1}}\ar[r]^{\delta_{1}\otimes 1}\ar[d]_{(\phi\otimes 1)^{m_{1}}} & (B\otimes_{k}T)^{m_{0}}\ar[r]^(0.7){\delta_{0}\otimes 1}\ar[d]_{(\phi\otimes 1)^{m_{0}}} & T\ar[r]\ar@{=}[d] & 0 \\
  & & (T\otimes_{R}A)^{m_{q-1}}\ar[r]^(0.7){1\otimes d_{q-1}} & \cdots\ar[r] & (T\otimes_{R}A)^{m_{1}}\ar[r]^{1\otimes d_{1}} & (T\otimes_{R}A)^{m_{0}}\ar[r]^(0.7){1\otimes d_{0}} & T &  \\
 }$$

Since $0\rightarrow B\xrightarrow{\delta_q} (B\otimes_{k}B)^{m_{q-1}}\xrightarrow{\delta_{q-1}}\cdots\rightarrow (B\otimes_{k}B)^{m_{1}}\xrightarrow{\delta_1} (B\otimes_{k}B)^{m_{0}}\xrightarrow{\delta_0} B\rightarrow 0$ is split exact as a complex of right $B$-modules, the first row of this commutative diagram is also split exact. Therefore we have split exact sequences $0\rightarrow K_1\xrightarrow{j_1}(B\otimes_{k}T)^{m_{0}}\xrightarrow{\delta_{0}\otimes 1}T\rightarrow 0$, $0\rightarrow K_2\xrightarrow{j_2}(B\otimes_{k}T)^{m_{1}}\xrightarrow{p_1}K_1\rightarrow 0$, $\cdots$, $0\rightarrow K_{q-1}\xrightarrow{j_{q-1}}(B\otimes_{k}T)^{m_{q-2}}\xrightarrow{p_{q-2}}K_{q-2}\rightarrow 0$, $0\rightarrow T\xrightarrow{\delta_{q}\otimes 1}(B\otimes_{k}T)^{m_{q-1}}\xrightarrow{p_{q-1}}K_{q-1}\rightarrow 0$ in $\mathrm{mod}$-$A$ such that $j_l p_l=\delta_{l}\otimes 1$ for $1\leq l\leq q-1$.

There is a commutative diagram
$$\xymatrix@R=2pc@C=2pc@W=0mm{
 K_1\ar[r]^(0.3){\underline{j_1}} & (B\otimes_{k}T)^{m_{0}}\ar[r]^(0.7){\underline{\delta_{0}\otimes 1}}\ar[d]_{\underline{(\phi\otimes 1)^{m_{0}}}} &T\ar[r]\ar@{=}[d] & \\
 T\otimes_{A}M_1 \ar[r]^{\underline{1\otimes i_1}} & (T\otimes_{R}A)^{m_{0}}\ar[r]^(0.7){\underline{1\otimes d_{0}}} & T\ar[r] & \\
 }$$
in $\underline{\mathrm{mod}}$-$A$, where its two rows are triangles and $\underline{(\phi\otimes 1)^{m_{0}}}$ is an isomorphism in $\underline{\mathrm{mod}}$-$A$. Therefore we have an isomorphism $\underline{\alpha_1}:K_1\rightarrow T\otimes_{A}M_1$ in $\underline{\mathrm{mod}}$-$A$ such that $\underline{(\phi\otimes 1)^{m_{0}}j_1}=\underline{(1\otimes i_{1})\alpha_1}$. Since $\underline{j_1}$ is a split monomorphism in $\underline{\mathrm{mod}}$-$A$, so does $\underline{1\otimes i_{1}}$. Since \begin{multline} \underline{(1\otimes i_{1}) \alpha_1 p_1}=\underline{(\phi\otimes 1)^{m_{0}}j_1 p_1}=\underline{(\phi\otimes 1)^{m_{0}}(\delta_{1}\otimes 1)}= \\ \underline{(1\otimes d_{1})(\phi\otimes 1)^{m_{1}}}=\underline{(1\otimes i_1) (1\otimes f_1) (\phi\otimes 1)^{m_{1}}} \end{multline} in $\underline{\mathrm{mod}}$-$A$ and since $\underline{1\otimes i_{1}}$ is a split monomorphism in $\underline{\mathrm{mod}}$-$A$, we have $\underline{\alpha_1 p_1}=\underline{(1\otimes f_1) (\phi\otimes 1)^{m_{1}}}$ in $\underline{\mathrm{mod}}$-$A$. Then we have a commutative diagram
$$\xymatrix@R=2pc@C=2pc@W=0mm{
 K_2\ar[r]^(0.3){\underline{j_2}} & (B\otimes_{k}T)^{m_{1}}\ar[r]^(0.7){\underline{p_1}}\ar[d]_{\underline{(\phi\otimes 1)^{m_{1}}}} &K_1\ar[r]\ar[d]^{\underline{\alpha_1}} & \\
 T\otimes_{A}M_2 \ar[r]^{\underline{1\otimes i_2}} & (T\otimes_{R}A)^{m_{1}}\ar[r]^{\underline{1\otimes f_{1}}} & T\otimes_{A}M_1\ar[r] & \\
 }$$
in $\underline{\mathrm{mod}}$-$A$, whose rows are triangles and vertical morphisms are isomorphisms. So we have an isomorphism $\underline{\alpha_2}:K_2\rightarrow T\otimes_{A}M_2$ in $\underline{\mathrm{mod}}$-$A$ such that $\underline{(\phi\otimes 1)^{m_{1}}j_2}=\underline{(1\otimes i_{2})\alpha_2}$. Inductively, we have isomorphisms $\underline{\alpha_l}:K_l\rightarrow T\otimes_{A}M_l$ in $\underline{\mathrm{mod}}$-$A$ for $1\leq l\leq q$ (let $K_q=T$), such that
$$\xymatrix@R=2pc@C=2pc@W=0mm{
 K_1\ar[r]^(0.3){\underline{j_1}}\ar[d]^{\underline{\alpha_1}} & (B\otimes_{k}T)^{m_{0}}\ar[r]^(0.7){\underline{\delta_{0}\otimes 1}}\ar[d]_{\underline{(\phi\otimes 1)^{m_{0}}}} &T\ar[r]\ar@{=}[d] & \\
 T\otimes_{A}M_1 \ar[r]^{\underline{1\otimes i_1}} & (T\otimes_{R}A)^{m_{0}}\ar[r]^(0.7){\underline{1\otimes d_{0}}} & T\ar[r] & \\
 }$$
is an isomorphism of triangles and
$$\xymatrix@R=2pc@C=2pc@W=0mm{
 K_{l+1}\ar[r]^(0.3){\underline{j_{l+1}}}\ar[d]^{\underline{\alpha_{l+1}}} & (B\otimes_{k}T)^{m_{l}}\ar[r]^(0.7){\underline{p_l}}\ar[d]_{\underline{(\phi\otimes 1)^{m_{l}}}} &K_l\ar[r]\ar[d]^{\underline{\alpha_l}} & \\
 T\otimes_{A}M_{l+1} \ar[r]^{\underline{1\otimes i_{l+1}}} & (T\otimes_{R}A)^{m_{l}}\ar[r]^{\underline{1\otimes f_{l}}} & T\otimes_{A}M_l\ar[r] & \\
 }$$
are isomorphisms of triangles for $1\leq l\leq q-1$ (let $j_q=\delta_{q}\otimes 1:T\rightarrow (B\otimes_{k}T)^{m_{q-1}}$).

Since $\underline{\alpha_q}:T\rightarrow T\otimes_{A}M_q$ is an isomorphism in $\underline{\mathrm{mod}}$-$A$, to show $-\otimes_{A}M_q$ induces a bijection between $\underline{\mathrm{Hom}}_{A}(T,T)$ and $\underline{\mathrm{Hom}}_{A}(T\otimes_{A}M_q,T\otimes_{A}M_q)$, it suffices to show that for each $\underline{f}\in\underline{\mathrm{End}}_{A}(T)$, the diagram
$$\xymatrix{
T\ar[r]^{\underline{f}}\ar[d]^{\underline{\alpha_q}} & T\ar[d]^{\underline{\alpha_q}} \\
T\otimes_{A}M_q\ar[r]^{\underline{f\otimes 1}} & T\otimes_{A}M_q \\
}$$
is commutative. We have an isomorphism $\underline{\mathrm{End}}_{A}(T)\cong\underline{\mathrm{Hom}}_{R}(R/radR,T_{R})\cong\underline{\mathrm{Hom}}_{R}(R/radR,B\otimes_{k}(R/radR))$, where the second isomorphism is induced from the isomorphism $\phi:B\otimes_{k} (R/radR)\rightarrow A/(radR)A$, $b\otimes\overline{1}\mapsto\overline{b}$ in $\underline{\mathrm{mod}}$-$R$. For $\underline{f}\in\underline{\mathrm{End}}_{A}(T)$, suppose the isomorphism $\underline{\mathrm{End}}_{A}(T)\rightarrow\underline{\mathrm{Hom}}_{R}(R/radR,B\otimes_{k}(R/radR))$ maps $\underline{f}$ to $\underline{g}$, where $g(\overline{1})=\sum_{j}\beta_j\otimes\overline{r_j}$ with $\beta_j\in B$, $r_j\in R$. Then $\underline{f}=\underline{h}$, where $h:T_{A}\rightarrow T_{A}$, $\overline{1}\mapsto \overline{\sum_{j}\beta_j r_j}$. Consider the diagram
$$\xymatrix@R=3pc@C=3pc@W=0mm{
&T\ar[dl]_{\underline{h}}\ar[dd]_{\underline{\alpha_q}}\ar[ddrrrr]^(0.7){\underline{(\phi\otimes 1)^{m_{q-1}}(\delta_{q}\otimes 1)}}&&&& \\
T\ar[dd]_{\underline{\alpha_q}}\ar[ddrrrr]^(0.7){\underline{(\phi\otimes 1)^{m_{q-1}}(\delta_{q}\otimes 1)}}&&&&& \\
&T\otimes_{A}M_q\ar[dl]^{\underline{h\otimes 1}}\ar[rrrr]^{\underline{1\otimes i_q}}&&&&(T\otimes_{R}A)^{m_{q-1}}\ar[dl]^{\underline{(h\otimes 1)^{m_{q-1}}}} \\
T\otimes_{A}M_q\ar[rrrr]^{\underline{1\otimes i_q}}&&&&(T\otimes_{R}A)^{m_{q-1}}& \\
}$$
$$\mbox{Figure }1$$
in $\underline{\mathrm{mod}}$-$A$, where $(\phi\otimes 1)^{m_{q-1}}(\delta_{q}\otimes 1)$ denotes the composition $T\xrightarrow{\delta_{q}\otimes 1}(B\otimes_{k}T)^{m_{q-1}}\xrightarrow{(\phi\otimes 1)^{m_{q-1}}}(T\otimes_{R}A)^{m_{q-1}}$. Since
$$\xymatrix@R=2pc@C=2pc@W=0mm{
T\ar[r]^(0.3){\underline{\delta_{q}\otimes 1}}\ar[d]^{\underline{\alpha_{q}}} & (B\otimes_{k}T)^{m_{q-1}}\ar[r]^(0.7){\underline{p_{q-1}}}\ar[d]_{\underline{(\phi\otimes 1)^{m_{q-1}}}} &K_{q-1}\ar[r]\ar[d]^{\underline{\alpha_{q-1}}} & \\
T\otimes_{A}M_{k} \ar[r]^(0.4){\underline{1\otimes i_{q}}} & (T\otimes_{R}A)^{m_{q-1}}\ar[r]^(0.52){\underline{1\otimes f_{q-1}}} & T\otimes_{A}M_{q-1}\ar[r] & \\
}$$
is an isomorphism of triangles in $\underline{\mathrm{mod}}$-$A$, and since $\underline{\delta_{q}\otimes 1}$ is a split monomorphism in $\underline{\mathrm{mod}}$-$A$,  $\underline{1\otimes i_{q}}$ is also a split monomorphism in $\underline{\mathrm{mod}}$-$A$. Since the bottom face, the front face, the back face of Figure $1$ are commutative, and since $\underline{1\otimes i_{q}}$ is a split monomorphism, to show the left face of Figure $1$ commutes, it suffices to show the diagram
$$\xymatrix@R=3pc@C=4pc@W=0mm{
T\ar[r]^{\underline{h}}\ar[d]_{\underline{\delta_{q}\otimes 1}}&T\ar[d]^{\underline{\delta_{q}\otimes 1}} \\
(B\otimes_{k}T)^{m_{q-1}}\ar[d]_{\underline{(\phi\otimes 1)^{m_{q-1}}}}&(B\otimes_{k}T)^{m_{q-1}}\ar[d]^{\underline{(\phi\otimes 1)^{m_{q-1}}}} \\
(T\otimes_{R}A)^{m_{q-1}}\ar[r]^{\underline{(h\otimes 1)^{m_{q-1}}}}&(T\otimes_{R}A)^{m_{q-1}} \\
}$$
$$\mbox{Figure }2$$
is commutative in $\underline{\mathrm{mod}}$-$A$.

Since $\delta_q:B\rightarrow(B\otimes_{k}B)^{m_{q-1}}$ is a $B^e$-homomorphism, we may write $\delta_q$ as $(\delta_{q}^{1},\cdots,\delta_{q}^{m_{q-1}})'$, where $\delta_{q}^{i}:B\rightarrow B\otimes_{k}B$, $1\mapsto \sum_{l} b_{il}\otimes b'_{il}$ for $1\leq i\leq m_{q-1}$. To show that the diagram in Figure $2$ commutes, it suffices to show for each $1\leq i\leq m_{q-1}$, the diagram
$$\xymatrix@R=3pc@C=4pc@W=0mm{
T\ar[r]^{h}\ar[d]_{\delta_{q}^{i}\otimes 1} & T\ar[d]^{\delta_{q}^{i}\otimes 1} \\
B\otimes_{k}T\ar[d]_{\phi\otimes 1} & B\otimes_{k}T\ar[d]^{\phi\otimes 1} \\
T\otimes_{R}A\ar[r]^{h\otimes 1} & T\otimes_{R}A \\
}$$
$$\mbox{Figure }3$$
is commutative in $\mathrm{mod}$-$A$.

For $\overline{1}\in T=A/(radR)A$, $(h\otimes 1)(\phi\otimes 1)(\delta_{q}^{i}\otimes 1)(\overline{1})=(h\otimes 1)(\phi\otimes 1)(\sum_{l} b_{il}\otimes \overline{b'_{il}})=(h\otimes 1)(\sum_{l} \overline{b_{il}}\otimes b'_{il})=\sum_{l} \overline{(\sum_{j}\beta_j r_j)b_{il}}\otimes b'_{il}=\sum_{j} (\sum_{l}\overline{\beta_j b_{il}}\otimes b'_{il})r_j$, where the last identity follows from the fact that the elements of $B$ commute with the elements of $R$ under multiplication. Moreover, $(\phi\otimes 1)(\delta_{q}^{i}\otimes 1)h(\overline{1})=(\phi\otimes 1)(\delta_{q}^{i}\otimes 1)(\overline{\sum_{j}\beta_j r_j})=(\phi\otimes 1)(\sum_{l} b_{il}\otimes \overline{b'_{il}(\sum_{j}\beta_j r_j)})=\sum_{l} \overline{b_{il}}\otimes b'_{il}(\sum_{j}\beta_j r_j)=\sum_{j}(\sum_{l} \overline{b_{il}}\otimes b'_{il}\beta_j) r_j$. Since $\delta_{q}^{i}:B\rightarrow B\otimes_{k}B$ is a $B^e$-homomorphism, $\sum_{l}\beta_j b_{il}\otimes b'_{il}=\beta_j(\sum_{l} b_{il}\otimes b'_{il})=\beta_j\delta_{q}^{i}(1)=\delta_{q}^{i}(\beta_j)=\delta_{q}^{i}(1)\beta_j=(\sum_{l} b_{il}\otimes b'_{il})\beta_j=\sum_{l} b_{il}\otimes b'_{il}\beta_j$ in $B\otimes_{k}B$. Since $\sum_{l}\overline{\beta_j b_{il}}\otimes b'_{il}\in T\otimes_{R}A$ (resp. $\sum_{l} \overline{b_{il}}\otimes b'_{il}\beta_j\in T\otimes_{R}A$) is the image of $\sum_{l}\beta_j b_{il}\otimes b'_{il}$ (resp. $\sum_{l} b_{il}\otimes b'_{il}\beta_j$) under the composition of morphisms $B\otimes_{k}B\rightarrow A\otimes_{k}A\rightarrow A\otimes_{R}A\rightarrow T\otimes_{R}A$, $\sum_{l}\overline{\beta_j b_{il}}\otimes b'_{il}=\sum_{l} \overline{b_{il}}\otimes b'_{il}\beta_j$ in $T\otimes_{R}A$. Therefore $(h\otimes 1)(\phi\otimes 1)(\delta_{q}^{i}\otimes 1)(\overline{1})=\sum_{j} (\sum_{l}\overline{\beta_j b_{il}}\otimes b'_{il})r_j=\sum_{j}(\sum_{l} \overline{b_{il}}\otimes b'_{il}\beta_j) r_j=(\phi\otimes 1)(\delta_{q}^{i}\otimes 1)h(\overline{1})$ and the diagram in Figure $3$ commutes.

\medskip
{\it Step 1.2: To show that $-\otimes_{A}M_q$ induces a bijection between $\underline{\mathrm{Hom}}_{A}(T,T[1])$ and $\underline{\mathrm{Hom}}_{A}(T\otimes_{A}M_q,T[1]\otimes_{A}M_q)$.}

Since the functor $-\otimes_{A}M_q:\underline{\mathrm{mod}}$-$A\rightarrow\underline{\mathrm{mod}}$-$A$ commutes with the functor $[1]=\Omega_{A}^{-1}:\underline{\mathrm{mod}}$-$A\rightarrow\underline{\mathrm{mod}}$-$A$ up to natural isomorphism, it suffices to show $-\otimes_{A}M_q$ induces a bijection between $\underline{\mathrm{Hom}}_{A}(\Omega_{A}T,T)$ and $\underline{\mathrm{Hom}}_{A}(\Omega_{A}T\otimes_{A}M_q,T\otimes_{A}M_q)$.

There is a commutative diagram
$$\xymatrix{
0\ar[r] & B\otimes_{k}radR\ar[r]\ar[d]^{\nu} & B\otimes_{k}R\ar[r]\ar[d]^{\mu} & B\otimes_{k}(R/radR)\ar[r]\ar[d]^{\phi} & 0 \\
0\ar[r] & (radR)A\ar[r] & A\ar[r] & A/(radR)A\ar[r] & 0 \\
}$$
in $\mathrm{mod}$-$R$ with exact rows, where $\mu$ and $\nu$ are induced by the multiplication of $A$. Since $R$ is symmetric and $A_{R}$ is projective, $\underline{\nu}=\Omega_{R}(\underline{\phi})$ is an isomorphism in $\underline{\mathrm{mod}}$-$R$. Therefore $B\otimes_{k}\Omega_{A}T=B\otimes_{k}(radR)A\cong B\otimes_{k}radR\otimes_{R}A\xrightarrow{\nu\otimes 1}(radR)A\otimes_{R}A=\Omega_{A}T\otimes_{R}A$ is an isomorphism in $\underline{\mathrm{mod}}$-$A$.

Since the elements of $B$ commute with the elements of $R$ under multiplication, $\Omega_{A}T=(radR)A$ becomes a $B$-$A$-bimodule. Applies the functors $-\otimes_{B}(\Omega_{A}T)_{A}$ and $\Omega_{A}T\otimes_{A}-$ to the complex $0\rightarrow B\xrightarrow{\delta_q} (B\otimes_{k}B)^{m_{q-1}}\xrightarrow{\delta_{q-1}}\cdots\rightarrow (B\otimes_{k}B)^{m_{1}}\xrightarrow{\delta_1} (B\otimes_{k}B)^{m_{0}}\xrightarrow{\delta_0} B\rightarrow 0$ and the complex $(A\otimes_{R}A)^{m_{q-1}}\xrightarrow{d_{q-1}}\cdots\rightarrow (A\otimes_{R}A)^{m_{1}}\xrightarrow{d_{1}} (A\otimes_{R}A)^{m_{0}}\xrightarrow{d_{0}} A$ respectively, we get a commutative diagram in $\mathrm{mod}$-$A$:

$$\xymatrix@R=2pc@C=1.4pc@W=0mm{
 0 \ar[r] & \Omega_{A}T \ar[r]^(0.3){\delta_{q}\otimes 1} & (B\otimes_{k}\Omega_{A}T)^{m_{q-1}}\ar[r]^(0.7){\delta_{q-1}\otimes 1}\ar[d]_{(\nu\otimes 1)^{m_{q-1}}} & \cdots\ar[r] & (B\otimes_{k}\Omega_{A}T)^{m_{1}}\ar[r]^{\delta_{1}\otimes 1}\ar[d]_{(\nu\otimes 1)^{m_{1}}} & (B\otimes_{k}\Omega_{A}T)^{m_{0}}\ar[r]^(0.7){\delta_{0}\otimes 1}\ar[d]_{(\nu\otimes 1)^{m_{0}}} & \Omega_{A}T\ar[r]\ar@{=}[d] & 0 \\
  & & (\Omega_{A}T\otimes_{R}A)^{m_{q-1}}\ar[r]^(0.7){1\otimes d_{q-1}} & \cdots\ar[r] & (\Omega_{A}T\otimes_{R}A)^{m_{1}}\ar[r]^{1\otimes d_{1}} & (\Omega_{A}T\otimes_{R}A)^{m_{0}}\ar[r]^(0.7){1\otimes d_{0}} & \Omega_{A}T &  \\
 }$$

By the same argument as in Step 1.1, we have isomorphisms of split triangles
$$\xymatrix@R=2pc@C=2pc@W=0mm{
 L_{l+1}\ar[r]^(0.3){\underline{\iota_{l+1}}}\ar[d]^{\underline{\beta_{l+1}}} & (B\otimes_{k}\Omega_{A}T)^{m_{l}}\ar[r]^(0.7){\underline{q_l}}\ar[d]_{\underline{(\nu\otimes 1)^{m_{l}}}} &L_l\ar[r]\ar[d]^{\underline{\beta_l}} & \\
 \Omega_{A}T\otimes_{A}M_{l+1} \ar[r]^{\underline{1\otimes i_{l+1}}} & (\Omega_{A}T\otimes_{R}A)^{m_{l}}\ar[r]^{\underline{1\otimes f_{l}}} & \Omega_{A}T\otimes_{A}M_l\ar[r] & \\
 }$$
in $\underline{\mathrm{mod}}$-$A$ for $0\leq l\leq q-1$, where $L_0=L_q=\Omega_{A}T$, $q_0=\delta_0\otimes 1:(B\otimes_{k}\Omega_{A}T)^{m_{0}}\rightarrow \Omega_{A}T$, $f_0=d_0:(A\otimes_{R}A)^{m_0}\rightarrow A$, $\iota_q=\delta_q\otimes 1:\Omega_{A}T\rightarrow(B\otimes_{k}\Omega_{A}T)^{m_{q-1}}$.

To show $-\otimes_{A}M_q$ induces a bijection between $\underline{\mathrm{Hom}}_{A}(\Omega_{A}T,T)$ and $\underline{\mathrm{Hom}}_{A}(\Omega_{A}T\otimes_{A}M_q,T\otimes_{A}M_q)$, it suffices to show that for each $\underline{f}\in\underline{\mathrm{Hom}}_{A}(\Omega_{A}T,T)$, the diagram
$$\xymatrix{
\Omega_{A}T\ar[r]^{\underline{f}}\ar[d]^{\underline{\beta_q}} & T\ar[d]^{\underline{\alpha_q}} \\
\Omega_{A}T\otimes_{A}M_q\ar[r]^{\underline{f\otimes 1}} & T\otimes_{A}M_q \\
}$$
is commutative. We have isomorphisms \begin{equation}\underline{\mathrm{Hom}}_{A}(\Omega_{A}T,T)=\underline{\mathrm{Hom}}_{A}(F(radR),T)\cong\underline{\mathrm{Hom}}_{R}(radR,T_{R}) \\
\cong\underline{\mathrm{Hom}}_{R}(radR,B\otimes_{k}(R/radR)),\end{equation} where the second isomorphism is induced from the isomorphism $\phi:B\otimes_{k} (R/radR)\rightarrow A/(radR)A$, $b\otimes\overline{1}\mapsto\overline{b}$ in $\underline{\mathrm{mod}}$-$R$. Choose a $k$-basis $x_1$, $\cdots$, $x_n$ of $B$, then each $g\in\mathrm{Hom}_{R}(radR,B\otimes_{k}(R/radR))$ can be written as a column vector $(g_1,\cdots,g_n)'$, where $g_i\in\mathrm{Hom}_{R}(radR,R/radR)$ for $1\leq i\leq n$. For $\underline{f}\in\underline{\mathrm{Hom}}_{A}(\Omega_{A}T,T)$, suppose the isomorphism $\underline{\mathrm{Hom}}_{A}(\Omega_{A}T,T)\rightarrow\underline{\mathrm{Hom}}_{R}(radR,B\otimes_{k}(R/radR))$ maps $\underline{f}$ to $\underline{g}$, where $g=(g_1,\cdots,g_n)'$ with $g_i\in\mathrm{Hom}_{R}(radR,R/radR)$. Suppose for each $r\in radR$, $g_{i}(r)=\overline{\gamma_i}$ with $\gamma_i\in R$. Then $\underline{f}=\underline{h}$, where $h\in\mathrm{Hom}_{A}(\Omega_{A}T,T)$ with $h(r)=\overline{\sum_{i=1}^{n}x_i \gamma_i}$ for each $r\in radR$. Consider the diagram
$$\xymatrix@R=3pc@C=3pc@W=0mm{
&\Omega_{A}T\ar[dl]_{\underline{h}}\ar[dd]_{\underline{\beta_q}}\ar[ddrrrr]^(0.7){\underline{(\nu\otimes 1)^{m_{q-1}}(\delta_{q}\otimes 1)}}&&&& \\
T\ar[dd]_{\underline{\alpha_q}}\ar[ddrrrr]^(0.7){\underline{(\phi\otimes 1)^{m_{q-1}}(\delta_{q}\otimes 1)}}&&&&& \\
&\Omega_{A}T\otimes_{A}M_q\ar[dl]^{\underline{h\otimes 1}}\ar[rrrr]^{\underline{1\otimes i_q}}&&&&(\Omega_{A}T\otimes_{R}A)^{m_{q-1}}\ar[dl]^{\underline{(h\otimes 1)^{m_{q-1}}}} \\
T\otimes_{A}M_q\ar[rrrr]^{\underline{1\otimes i_q}}&&&&(T\otimes_{R}A)^{m_{q-1}}& \\
}$$
$$\mbox{Figure }4$$
in $\underline{\mathrm{mod}}$-$A$, where $(\phi\otimes 1)^{m_{q-1}}(\delta_{q}\otimes 1)$ denotes the composition $T\xrightarrow{\delta_{q}\otimes 1}(B\otimes_{k}T)^{m_{q-1}}\xrightarrow{(\phi\otimes 1)^{m_{q-1}}}(T\otimes_{R}A)^{m_{q-1}}$ and $(\nu\otimes 1)^{m_{q-1}}(\delta_{q}\otimes 1)$ denotes the composition $\Omega_{A}T\xrightarrow{\delta_{q}\otimes 1}(B\otimes_{k}\Omega_{A}T)^{m_{q-1}}\xrightarrow{(\nu\otimes 1)^{m_{q-1}}}(\Omega_{A}T\otimes_{R}A)^{m_{q-1}}$. Since the bottom face, the front face, the back face of Figure $4$ are commutative, and since $\underline{1\otimes i_{q}}$ is a split monomorphism in $\underline{\mathrm{mod}}$-$A$, to show the left face of Figure $4$ commutes, it suffices to show the diagram
$$\xymatrix@R=3pc@C=4pc@W=0mm{
\Omega_{A}T\ar[r]^{\underline{h}}\ar[d]_{\underline{\delta_{q}\otimes 1}}&T\ar[d]^{\underline{\delta_{q}\otimes 1}} \\
(B\otimes_{k}\Omega_{A}T)^{m_{q-1}}\ar[d]_{\underline{(\nu\otimes 1)^{m_{q-1}}}}&(B\otimes_{k}T)^{m_{q-1}}\ar[d]^{\underline{(\phi\otimes 1)^{m_{q-1}}}} \\
(\Omega_{A}T\otimes_{R}A)^{m_{q-1}}\ar[r]^{\underline{(h\otimes 1)^{m_{q-1}}}}&(T\otimes_{R}A)^{m_{q-1}} \\
}$$
$$\mbox{Figure }5$$
is commutative in $\underline{\mathrm{mod}}$-$A$.

Since $\delta_q:B\rightarrow(B\otimes_{k}B)^{m_{q-1}}$ is a $B^e$-homomorphism, we may write $\delta_q$ as $(\delta_{q}^{1},\cdots,\delta_{q}^{m_{q-1}})'$, where $\delta_{q}^{i}:B\rightarrow B\otimes_{k}B$, $1\mapsto \sum_{l} b_{il}\otimes b'_{il}$ for $1\leq i\leq m_{q-1}$. To show the diagram in Figure $5$ commutes, it suffices to show for each $1\leq i\leq m_{q-1}$, the diagram
$$\xymatrix@R=3pc@C=4pc@W=0mm{
\Omega_{A}T\ar[r]^{h}\ar[d]_{\delta_{q}^{i}\otimes 1} & T\ar[d]^{\delta_{q}^{i}\otimes 1} \\
B\otimes_{k}\Omega_{A}T\ar[d]_{\nu\otimes 1} & B\otimes_{k}T\ar[d]^{\phi\otimes 1} \\
\Omega_{A}T\otimes_{R}A\ar[r]^{h\otimes 1} & T\otimes_{R}A \\
}$$
$$\mbox{Figure }6$$
is commutative in $\mathrm{mod}$-$A$.

For each $r\in radR\subseteq (radR)A=T$, $(h\otimes 1)(\nu\otimes 1)(\delta_{q}^{i}\otimes 1)(r)=(h\otimes 1)(\nu\otimes 1)(\sum_{l} b_{il}\otimes b'_{il}r)=(h\otimes 1)(\nu\otimes 1)(\sum_{l} b_{il}\otimes rb'_{il})=(h\otimes 1)(\sum_{l} b_{il}r\otimes b'_{il})=(h\otimes 1)(\sum_{l} rb_{il}\otimes b'_{il})=\sum_{l} \overline{(\sum_{j=1}^{n}x_j \gamma_j)b_{il}}\otimes b'_{il}=\sum_{j=1}^{n}(\sum_{l}\overline{x_jb_{il}}\otimes b'_{il})\gamma_j$ and $(\phi\otimes 1)(\delta_{q}^{i}\otimes 1)h(r)=(\phi\otimes 1)(\delta_{q}^{i}\otimes 1)(\overline{\sum_{j=1}^{n}x_j \gamma_j})=(\phi\otimes 1)(\sum_{l} b_{il}\otimes \overline{b'_{il}(\sum_{j=1}^{n}x_j \gamma_j)})=\sum_{l}\sum_{j=1}^{n}\overline{b_{il}}\otimes b'_{il}x_j \gamma_j=\sum_{j=1}^{n}(\sum_{l}\overline{b_{il}}\otimes b'_{il}x_j)\gamma_j$. Here we use the fact that the elements of $B$ commute with the elements of $R$ under multiplication. Since $\delta_{q}^{i}:B\rightarrow B\otimes_{k}B$ is a $B^e$-homomorphism, $\sum_{l}x_j b_{il}\otimes b'_{il}=x_j(\sum_{l} b_{il}\otimes b'_{il})=x_j\delta_{q}^{i}(1)=\delta_{q}^{i}(x_j)=\delta_{q}^{i}(1)x_j=(\sum_{l} b_{il}\otimes b'_{il})x_j=\sum_{l} b_{il}\otimes b'_{il}x_j$ in $B\otimes_{k}B$. Since $\sum_{l}\overline{x_j b_{il}}\otimes b'_{il}\in T\otimes_{R}A$ (resp. $\sum_{l} \overline{b_{il}}\otimes b'_{il}x_j\in T\otimes_{R}A$) is the image of $\sum_{l}x_j b_{il}\otimes b'_{il}$ (resp. $\sum_{l} b_{il}\otimes b'_{il}x_j$) under the composition of morphisms $B\otimes_{k}B\rightarrow A\otimes_{k}A\rightarrow A\otimes_{R}A\rightarrow T\otimes_{R}A$, $\sum_{l}\overline{x_j b_{il}}\otimes b'_{il}=\sum_{l} \overline{b_{il}}\otimes b'_{il}x_j$ in $T\otimes_{R}A$. Therefore $(h\otimes 1)(\nu\otimes 1)(\delta_{q}^{i}\otimes 1)(r)=\sum_{j=1}^{n}(\sum_{l}\overline{x_jb_{il}}\otimes b'_{il})\gamma_j=\sum_{j=1}^{n}(\sum_{l}\overline{b_{il}}\otimes b'_{il}x_j)\gamma_j=(\phi\otimes 1)(\delta_{q}^{i}\otimes 1)h(r)$ and the diagram in Figure $6$ commutes.

\medskip
{\it Step 1.3: To show that $-\otimes_{A}M_q$ induces bijections between $\underline{\mathrm{Hom}}_{A}(X,Y[i])$ and $\underline{\mathrm{Hom}}_{A}(X\otimes_{A}M_q,Y[i]\otimes_{A}M_q)$ for $X$, $Y\in T^{\perp}$ and $i=0$, $1$.}

For each $X\in\underline{\mathrm{mod}}$-$A$, $\underline{\mathrm{Hom}}_{A}(T,X)=\underline{\mathrm{Hom}}_{A}(F(R/radR),X)\cong\underline{\mathrm{Hom}}_{R}(R/radR,X_R)$. Since $R$ is symmetric, $T^{\perp}=\{X\in\underline{\mathrm{mod}}$-$A\mid X_R$ projective$\}$. Since $A_R$ is projective, $T^{\perp}$ is closed under $[n]=\Omega_{A}^{-n}:\underline{\mathrm{mod}}$-$A\rightarrow\underline{\mathrm{mod}}$-$A$ for all $n\in\mathbb{Z}$. Therefore it is suffice to show that $-\otimes_{A}M_q$ is fully faithful when is restricted to $T^{\perp}$. Since there exists a triangle $\Omega_{A^{e}}(A)\xrightarrow{\underline{w_1}} M_1\xrightarrow{\underline{i_1}}(A\otimes_{R}A)^{m_0}\xrightarrow{\underline{d_0}}A$ in $\underline{\mathrm{lrp}}(A)$, and since $X\otimes_{A}(A\otimes_{R}A)^{m_0}=0$ in $\underline{\mathrm{mod}}$-$A$ for $X\in T^{\perp}$, $\underline{w_1}$ induces a natural isomorphism between functors $-\otimes_{A}\Omega_{A^{e}}(A):T^{\perp}\rightarrow\underline{\mathrm{mod}}$-$A$ and $-\otimes_{A}M_1:T^{\perp}\rightarrow\underline{\mathrm{mod}}$-$A$. Similarly, the functors $-\otimes_{A}(M_i[-1]):T^{\perp}\rightarrow\underline{\mathrm{mod}}$-$A$ and $-\otimes_{A}M_{i+1}:T^{\perp}\rightarrow\underline{\mathrm{mod}}$-$A$ are natural isomorphic for $1\leq i\leq q-1$. Therefore $-\otimes_{A}M_q:T^{\perp}\rightarrow\underline{\mathrm{mod}}$-$A$ is natural isomorphic to $\Omega_{A}^{q}(-)\cong-\otimes_{A}\Omega_{A^{e}}^{q}(A):T^{\perp}\rightarrow\underline{\mathrm{mod}}$-$A$, which implies that $-\otimes_{A}M_q$ is fully faithful when is restricted to $T^{\perp}$.

\medskip
{\it Step 1.4: To show that $-\otimes_{A}M_q$ induces bijections between $\underline{\mathrm{Hom}}_{A}(T,X[i])$ (resp. $\underline{\mathrm{Hom}}_{A}(X,T[i])$) and $\underline{\mathrm{Hom}}_{A}(T\otimes_{A}M_q,X[i]\otimes_{A}M_q)$ (resp. $\underline{\mathrm{Hom}}_{A}(X\otimes_{A}M_q,T[i]\otimes_{A}M_q)$) for $X\in T^{\perp}$ and for $i=0$, $1$.}

For each $X\in\underline{\mathrm{mod}}$-$A$, we have $$\underline{\mathrm{Hom}}_{A}(X,T)=\underline{\mathrm{Hom}}_{A}(X,F(R/radR))\cong\underline{\mathrm{Hom}}_{R}(X_R,R/radR).$$ Therefore $\prescript{\perp}{}{T}=\{X\in\underline{\mathrm{mod}}$-$A\mid X_R$ is projective$\}=T^{\perp}$. Since $T^{\perp}=\prescript{\perp}{}{T}$ is closed under $[n]=\Omega_{A}^{-n}:\underline{\mathrm{mod}}$-$A\rightarrow\underline{\mathrm{mod}}$-$A$ for all $n\in\mathbb{Z}$, $\underline{\mathrm{Hom}}_{A}(T,X[i])=0$ and $\underline{\mathrm{Hom}}_{A}(X,T[i])=0$ for $X\in T^{\perp}$ and for $i=0$, $1$. Since $T\otimes_{A}M_q\cong T$ in $\underline{\mathrm{mod}}$-$A$ and $Y\otimes_{A}M_q\cong Y[-q]$ in $\underline{\mathrm{mod}}$-$A$ for every $Y\in T^{\perp}$, $\underline{\mathrm{Hom}}_{A}(T\otimes_{A}M_q,X[i]\otimes_{A}M_q)=0$ and $\underline{\mathrm{Hom}}_{A}(X\otimes_{A}M_q,T[i]\otimes_{A}M_q)=0$ for $X\in T^{\perp}$ and for $i=0$, $1$.

By Step 1.1 $\sim$ Step 1.4, we have shown that $-\otimes_{A}M_q:\underline{\mathrm{mod}}$-$A\rightarrow\underline{\mathrm{mod}}$-$A$ is a stable auto-equivalence of $A$ when $A$ is indecomposable.

\medskip
{\it Case 2: Assume that $A$ is decomposable.}

Let $A=A_1\times\cdots\times A_p\times A_{p+1}\times\cdots\times A_r$ be the decomposition of $A$ into indecomposable blocks, where $A_{p+1}$, $\cdots$, $A_r$ are all semisimple blocks of $A$. Let $T_A=(R/radR)\otimes_{A}A\cong A/(radR)A$ and suppose $A_{1}$, $\cdots$, $A_m$ ($m\leq p$) be all indecomposable blocks of $A$ such that there exists an indecomposable non-projective summand of $T_A$ which belongs to the block. Then $\underline{\mathrm{mod}}$-$A_i$ is contained in $T^{\perp}$ for each $m+1\leq i\leq p$. Let $\mathscr{C}=\{T\}\cup T^{\perp}$ be a strong spanning class of $\underline{\mathrm{mod}}$-$A$.

Similar to Case 1, the following statements are still true: \\ $(i)$ $T^{\perp}=\prescript{\perp}{}{T}$ is closed under $[n]=\Omega_{A}^{-n}:\underline{\mathrm{mod}}$-$A\rightarrow\underline{\mathrm{mod}}$-$A$ for all $n\in\mathbb{Z}$; \\ $(ii)$ $T\otimes_{A}M_q\cong T$ in $\underline{\mathrm{mod}}$-$A$ and $X\otimes_{A}M_q\cong X[-q]$ in $\underline{\mathrm{mod}}$-$A$ for every $X\in T^{\perp}$; \\ $(iii)$ $-\otimes_{A}M_q$ induces bijections between $\underline{\mathrm{Hom}}_{A}(X,Y[i])$ and $\underline{\mathrm{Hom}}_{A}(X\otimes_{A}M_q,(Y[i])\otimes_{A}M_q)$ for all $X$, $Y\in\mathscr{C}$ and for all $i=0$, $1$.

Since the functor $-\otimes_{A}M_q:\underline{\mathrm{mod}}$-$A\rightarrow\underline{\mathrm{mod}}$-$A$ has both left and right adjoints, by statement $(iii)$ and Proposition \ref{fully faithful} it is fully faithful.

Let $T\cong \oplus_{i=1}^{m} T_i$ in $\underline{\mathrm{mod}}$-$A$, where $T_i\in\underline{\mathrm{mod}}$-$A_i$. Then $T_i\neq 0$ in $\underline{\mathrm{mod}}$-$A_i$ for each $1\leq i\leq m$. Since the functor $-\otimes_{A}M_q:\underline{\mathrm{mod}}$-$A\rightarrow\underline{\mathrm{mod}}$-$A$ is fully faithful and since $\underline{\mathrm{mod}}$-$A_i$ is an indecomposable triangulated category for $1\leq i\leq p$, by Lemma \ref{block}, for each $1\leq i\leq m$, $T_i\otimes_{A}M_q\in\underline{\mathrm{mod}}$-$A_{\sigma(i)}$ for some $1\leq \sigma(i)\leq p$. Since $T\otimes_{A}M_q\cong T$ in $\underline{\mathrm{mod}}$-$A$, we implies that $\sigma$ is a permutation of $\{1$, $\cdots$, $m\}$ and $T_i\otimes_{A}M_q\cong T_{\sigma(i)}$ for each $1\leq i\leq m$. By Lemma \ref{block}, $-\otimes_{A}M_q$ induces functors $\underline{\mathrm{mod}}$-$A_i\rightarrow\underline{\mathrm{mod}}$-$A_{\sigma(i)}$ for each $1\leq i\leq m$. Since $X\otimes_{A}M_q\cong X[-q]$ in $\underline{\mathrm{mod}}$-$A$ for every $X\in T^{\perp}$ and since $\underline{\mathrm{mod}}$-$A_i$ is contained in $T^{\perp}$ for each $m+1\leq i\leq p$, $-\otimes_{A}M_q$ induces functors $\underline{\mathrm{mod}}$-$A_i\rightarrow\underline{\mathrm{mod}}$-$A_{i}$ for each $m+1\leq i\leq p$.

Let $\tau$ be a permutation of $\{1$, $\cdots$, $p\}$ such that $\tau(i)=\sigma(i)$ for $1\leq i\leq m$ and $\tau(i)=i$ for $m+1\leq i\leq p$. Since $-\otimes_{A}M_q$ induces functors $\underline{\mathrm{mod}}$-$A_i\rightarrow\underline{\mathrm{mod}}$-$A_{\tau(i)}$ for each $1\leq i\leq p$, to show $-\otimes_{A}M_q:\underline{\mathrm{mod}}$-$A\rightarrow\underline{\mathrm{mod}}$-$A$ is a triangulated equivalence, it suffices to show each $-\otimes_{A}M_q:\underline{\mathrm{mod}}$-$A_i\rightarrow\underline{\mathrm{mod}}$-$A_{\tau(i)}$ is a triangulated equivalence for each $1\leq i\leq p$.

Let $1=\sum_{i=1}^{r} e_i$, where $e_i\in A_i$. For each $1\leq i\leq p$, $-\otimes_{A}M_q$ is natural isomorphic to $-\otimes_{A}e_i M_q$ as functors from $\underline{\mathrm{mod}}$-$A_i$ to $\underline{\mathrm{mod}}$-$A_{\tau(i)}$. For each $X\in\underline{\mathrm{mod}}$-$A_i$, $X\otimes_{A}e_i M_q\cong \oplus_{j=1}^{p}(X\otimes_{A}e_i M_q e_j)$ in $\underline{\mathrm{mod}}$-$A$. Since $X\otimes_{A}e_i M_q\in\underline{\mathrm{mod}}$-$A_{\tau(i)}$, $X\otimes_{A}e_i M_q e_j=0$ in $\underline{\mathrm{mod}}$-$A_j$ for $j\neq \tau(i)$. Then $-\otimes_{A}M_q$ is natural isomorphic to $-\otimes_{A}e_i M_q e_{\tau(i)}$ as functors from $\underline{\mathrm{mod}}$-$A_i$ to $\underline{\mathrm{mod}}$-$A_{\tau(i)}$ for each $1\leq i\leq p$. Since $e_i M_q e_{\tau(i)}$ is a summand of $e_i M_i$ as left $A_i$-module, and since $e_i M_i$ is projective as a left $A_i$-module, so is $e_i M_q e_{\tau(i)}$. Similarly, $e_i M_q e_{\tau(i)}$ is also projective as a right $A_{\tau(i)}$-module. Therefore $e_i M_q e_{\tau(i)}$ is a left-right projective $A_i$-$A_{\tau(i)}$-bimodule. Since both $A_i$ and $A_{\tau(i)}$ are symmetric, $-\otimes_{A}D(e_i M_q e_{\tau(i)}):\underline{\mathrm{mod}}$-$A_{\tau(i)}\rightarrow\underline{\mathrm{mod}}$-$A_i$ is both the left and the right adjoint of $-\otimes_{A}e_i M_q e_{\tau(i)}:\underline{\mathrm{mod}}$-$A_i\rightarrow\underline{\mathrm{mod}}$-$A_{\tau(i)}$. Since $-\otimes_{A}e_i M_q e_{\tau(i)}:\underline{\mathrm{mod}}$-$A_i\rightarrow\underline{\mathrm{mod}}$-$A_{\tau(i)}$ is fully faithful, $\underline{\mathrm{mod}}$-$A_i$ is nonzero, and $\underline{\mathrm{mod}}$-$A_{\tau(i)}$ is indecomposable as a triangulated category, it follows from Proposition \ref{equivalence} that $-\otimes_{A}e_i M_q e_{\tau(i)}:\underline{\mathrm{mod}}$-$A_i\rightarrow\underline{\mathrm{mod}}$-$A_{\tau(i)}$ is a triangulated equivalence. Therefore $-\otimes_{A}M_q:\underline{\mathrm{mod}}$-$A_i\rightarrow\underline{\mathrm{mod}}$-$A_{\tau(i)}$ is a triangulated equivalence.
\end{proof}

\section{A variation of the construction in previous section}

There exist some examples of stable equivalences (cf. Subsection 6.1) which do not satisfies Assumptions 1 in last section, however if we modify some conditions, we may obtain a similar proposition, which will include these examples.

In this section, we make the following

{\bf Assumption 2:} Let $k$ be a field, $A$ be a symmetric $k$-algebra, $R$ be a non-semisimple symmetric subalgebra of $A$ such that $A_R$ is projective. Let $B$ be another subalgebra of $A$, such that the following conditions hold: \\ $(a')$ $(radR)B=B(radR)$; \\ $(b)$ $B\otimes_{k} (R/radR)\xrightarrow{\phi} A/(radR)A$, $b\otimes\overline{1}\mapsto\overline{b}$ is an isomorphism in $\underline{\mathrm{mod}}$-$R$;\\ $(c)$ $B$ has a periodic free $B^e$-resolution

$0\rightarrow B\xrightarrow{\delta_q} (B\otimes_{k}B)^{m_{q-1}}\xrightarrow{\delta_{q-1}}\cdots\rightarrow (B\otimes_{k}B)^{m_{1}}\xrightarrow{\delta_1} (B\otimes_{k}B)^{m_{0}}\xrightarrow{\delta_0} B\rightarrow 0;$ \\
$(d)$ The image $x$ of $\delta_q(1)$ in $(A\otimes_{R}A)^{m_{q-1}}$ satisfies $rx=xr$ for all $r\in R$; \\ $(e)$ There exists a complex

$(A\otimes_{R}A)^{m_{q-1}}\xrightarrow{d_{q-1}}(A\otimes_{R}A)^{m_{q-2}}\xrightarrow{d_{q-2}}\cdots\rightarrow (A\otimes_{R}A)^{m_{1}}\xrightarrow{d_1} (A\otimes_{R}A)^{m_{0}}\xrightarrow{d_0} A\rightarrow 0;$ \\
of $A^e$-modules such that the diagram
$$\xymatrix@R=3pc@C=2pc@W=0mm{
(B\otimes_{k}B)^{m_{q-1}}\ar[r]^{\delta_{q-1}}\ar[d]& (B\otimes_{k}B)^{m_{q-2}}\ar[r]^(0.7){\delta_{q-2}}\ar[d]&\cdots\ar[r] &(B\otimes_{k}B)^{m_{1}}\ar[r]^{\delta_1}\ar[d] & (B\otimes_{k}B)^{m_{0}}\ar[r]^(0.7){\delta_0}\ar[d] & B\ar[r]\ar[d] & 0 \\
(A\otimes_{R}A)^{m_{q-1}}\ar[r]^{d_{q-1}}& (A\otimes_{R}A)^{m_{q-2}}\ar[r]^(0.7){d_{q-2}}&\cdots\ar[r] &(A\otimes_{R}A)^{m_{1}}\ar[r]^{d_1} & (A\otimes_{R}A)^{m_{0}}\ar[r]^(0.7){d_0} & A\ar[r] & 0 \\
}$$
is commutative, where the vertical morphisms are the obvious morphisms.

Note that the condition $(a')$ is a generalization of $(a)$ in Assumption 1, the conditions $(b)$ and $(c)$ are the same as in Assumption 1, and the conditions $(d)$ and $(e)$ are new. Clearly, if the triple $(A,R,B)$ satisfies Assumption 1, then it also satisfies Assumption 2.

Similar to Lemma \ref{from complex to triangles}, there exist triangles $M_1 \xrightarrow{\underline{i_1}}(A\otimes_{R}A)^{m_{0}}\xrightarrow{\underline{d_{0}}} A\rightarrow$, $M_2 \xrightarrow{\underline{i_2}}(A\otimes_{R}A)^{m_{1}}\xrightarrow{\underline{f_{1}}} M_1 \rightarrow$, $\cdots$, $M_q \xrightarrow{\underline{i_q}}(A\otimes_{R}A)^{m_{q-1}}\xrightarrow{\underline{f_{q-1}}} M_{q-1} \rightarrow$ of $\underline{\mathrm{lrp}}(A)$ such that $i_p f_p =d_p$ for $1\leq p\leq q-1$. We have following proposition, which is an analogy of Theorem \ref{auto-equivalence}.

\begin{Thm}\label{auto-equivalence 2}
Let $(A,R,B)$ be the triple that satisfies Assumption 2. If $M_q$ is the $A$-$A$-bimodule defined above, then $-\otimes_{A}M_q:\underline{\mathrm{mod}}$-$A\rightarrow\underline{\mathrm{mod}}$-$A$ is a stable auto-equivalence of $A$.
\end{Thm}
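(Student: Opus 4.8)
The plan is to mirror the proof of Theorem \ref{auto-equivalence}, isolating precisely where Assumption 1 was used and checking that the weaker Assumption 2 still suffices. As before, let $F = -\otimes_R A_A$ and $G = -\otimes_A A_R$ be the induction and restriction functors; since $R$ is symmetric and $_RA_A$ is left-right projective, $(F,G)$ and $(G,F)$ are adjoint pairs, and they descend to stable functors with $G$ both left and right adjoint to $F$. Set $T_A = F(R/\mathrm{rad}R) \cong A/(\mathrm{rad}R)A$, which by Remark \ref{T is not semisimple} is a nonzero object of $\underline{\mathrm{mod}}$-$A$. The strategy is again to use the strong spanning class $\mathscr{C} = \{T\} \cup T^{\perp}$ of $\underline{\mathrm{mod}}$-$A$ (or its block-wise refinement in the decomposable case) and to verify via Corollary \ref{from strong spanning class to equivalence} that $-\otimes_A M_q$ induces bijections on $\underline{\mathrm{Hom}}_A(X, Y[i])$ for $X, Y \in \mathscr{C}$ and $i = 0, 1$, organized into the same four steps (1.1--1.4) as in Theorem \ref{auto-equivalence}, followed by the decomposable-case reduction of Case 2.

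Steps 1.3 and 1.4 carry over essentially verbatim: they rest only on the identification $T^{\perp} = {}^{\perp}T = \{X \mid X_R \text{ projective}\}$, the fact that $A_R$ is projective (so $T^{\perp}$ is closed under all shifts), and the triangles $\Omega_{A^e}(A) \to M_1 \to (A\otimes_R A)^{m_0} \to A$ etc.\ coming from the complex in condition $(e)$; none of these used condition $(a)$ per se, only that $(A\otimes_R A)^{m_j}$ restricts to zero in $\underline{\mathrm{mod}}$-$A$ over $T^{\perp}$, which remains true. Likewise Case 2 is a purely formal consequence of full faithfulness plus Lemma \ref{block} and Proposition \ref{equivalence}, so it transfers unchanged. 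Steps 1.1 and 1.2 are where Assumption 1$(a)$ (the elementwise commutation $br = rb$) was used, both to build the isomorphism $\phi \otimes 1 \colon B\otimes_k T \xrightarrow{\sim} T\otimes_R A$ in $\underline{\mathrm{mod}}$-$A$ and, crucially, in the final commutativity computations of Figures 3 and 6. Under Assumption 2 one instead has only $(\mathrm{rad}R)B = B(\mathrm{rad}R)$ together with the ad hoc commutation condition $(d)$ on the single element $x = \delta_q(1) \in (A\otimes_R A)^{m_{q-1}}$; the observation to exploit is that the entire argument in Steps 1.1--1.2 only ever needs to move $x$ (equivalently the components $\delta_q^i(1)$, pushed into $A\otimes_R A$) past elements of $R$, which is exactly what $(d)$ grants, while the rest of the diagram chase in $\underline{\mathrm{mod}}$-$A$ proceeds as before. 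One must also recheck that the rows of the relevant commutative diagrams are still (split) exact: the first row is obtained by applying $-\otimes_B T_A$ to the split-exact (over $B$) resolution (\ref{eq: B-B-complex}), so it stays split exact, and $(e)$ supplies the bottom complex; the vertical comparison maps are well-defined because of $(a')$ and the compatibility square postulated in $(e)$.

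I would therefore write the proof as: ``The proof proceeds exactly as that of Theorem \ref{auto-equivalence}, and we only indicate the modifications needed to accommodate Assumption 2 in place of Assumption 1.'' Then spell out (i) that $\phi\otimes 1$ and $\nu\otimes 1$ are still isomorphisms in $\underline{\mathrm{mod}}$-$A$ — using $(b)$ and, for $\nu$, the standard syzygy argument $\underline{\nu} = \Omega_R(\underline{\phi})$ together with $(a')$ to ensure $(\mathrm{rad}R)A$ is a $B$-$A$-bimodule; (ii) that the split short exact sequences $0 \to K_{l+1} \to (B\otimes_k T)^{m_l} \to K_l \to 0$ and their $A^e$-counterparts exist as before, now using the complex from $(e)$; (iii) that the inductive construction of the isomorphisms $\underline{\alpha_l}\colon K_l \xrightarrow{\sim} T\otimes_A M_l$ (and $\underline{\beta_l}$ in Step 1.2) goes through word for word; and (iv) that in the final verification — the analogues of Figures 3 and 6 — one substitutes the elementwise commutation $br = rb$ with an appeal to condition $(d)$: the two sides of the square differ by moving $x$ (resp.\ its image in $T\otimes_R A$, resp.\ $\Omega_A T\otimes_R A$) past elements $r_j \in R$, and $rx = xr$ makes them agree. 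The main obstacle, and the only place requiring genuine care rather than bookkeeping, is step (iv): one has to confirm that the computation genuinely only invokes commutation of $R$ with the \emph{specific} element $x = \delta_q(1)$ and not with arbitrary elements of $B$ — i.e.\ that replacing the global hypothesis $(a)$ by the pointwise hypothesis $(d)$ really is enough at every occurrence in Figures 3 and 6 and in the passage from Figure 2 to Figure 3 (and the analogous 5-to-6). Everything else is a routine transcription of the Assumption 1 argument.
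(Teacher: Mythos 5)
Your proposal is correct and follows essentially the same route as the paper: the paper's own proof likewise declares that everything transfers from Theorem \ref{auto-equivalence} except the commutativity of Figures 3 and 6, and then verifies those squares by combining condition $(d)$ (to move elements of $R$ past the image of $\delta_q(1)$ in $A\otimes_R A$) with the $B^e$-linearity of $\delta_q^i$ (to move the $\beta_j\in B$ past), exactly the two ingredients you identify. Your step (iv) is precisely the computation the paper carries out, so the proposal matches the intended argument.
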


\begin{proof}
Since $(radR)B=B(radR)$, $T_A=A/(radR)A$ and $\Omega_{A}T=(radR)A$ becomes $B$-$A$-bimodules. The proof is similar to the proof of Theorem \ref{auto-equivalence}. The only difficulty is to show the diagrams in Figure $3$ and Figure $6$ are commutative.

\medskip
{\it To show that the diagrams in Figure $3$ are commutative.}

Since the image $x$ of $\delta_q(1)$ in $(A\otimes_{R}A)^{m_{q-1}}$ satisfies $rx=xr$ for all $r\in R$, we have $\sum_{l} rb_{il}\otimes b'_{il}=\sum_{l} b_{il}\otimes b'_{il}r$ in $A\otimes_{R}A$ for all $r\in R$. Therefore $\sum_{l} \overline{rb_{il}}\otimes b'_{il}=\sum_{l} \overline{b_{il}}\otimes b'_{il}r$ in $T\otimes_{R}A$ for all $r\in R$. Moreover, since $\delta_{q}^{i}$ is a $B^e$-homomorphism, $\sum_{l} bb_{il}\otimes b'_{il}=\sum_{l} b_{il}\otimes b'_{il}b$ in $B\otimes_{k}B$ for all $b\in B$, and therefore $\sum_{l} \overline{bb_{il}}\otimes b'_{il}=\sum_{l} \overline{b_{il}}\otimes b'_{il}b$ in $T\otimes_{R}A$ for all $b\in B$. We have $(h\otimes 1)(\phi\otimes 1)(\delta_{q}^{i}\otimes 1)(\overline{1})=\sum_{l,j}\overline{\beta_j r_j b_{il}}\otimes b'_{il}=\sum_{j}\beta_j\cdot(\sum_{l} \overline{r_j b_{il}}\otimes b'_{il})=\sum_{j}\beta_j\cdot(\sum_{l} \overline{b_{il}}\otimes b'_{il} r_j)=\sum_{j}\sum_{l} (\overline{\beta_j b_{il}}\otimes b'_{il})\cdot r_j=\sum_{j}\sum_{l} (\overline{b_{il}}\otimes b'_{il}\beta_j)\cdot r_j=\sum_{j}\sum_{l} \overline{b_{il}}\otimes b'_{il}\beta_j r_j=(\phi\otimes 1)(\delta_{q}^{i}\otimes 1)h(\overline{1})$ and the diagram in Figure $3$ commutes.

\medskip
{\it To show that the diagrams in Figure $6$ are commutative.}

For $r\in radR\subseteq (radR)A=\Omega_{A}T$, $(\delta_{q}^{i}\otimes 1)(r)=\sum_{l} b_{il}\otimes b'_{il}r$. There is a commutative diagram
$$\xymatrix@R=3pc@C=2pc@W=0mm{
B\otimes_{k}(radR)A\ar[r]^(0.6){u}\ar[d]^{\nu\otimes 1} & A\otimes_{k}A\ar[d]^{p} \\
(radR)A\otimes_{R}A\ar[r]^(0.6){v} & A\otimes_{R}A \\
}$$
in $\mathrm{mod}$-$A$, where $u$, $v$, $p$ are the obvious morphisms. Since $\sum_{l} rb_{il}\otimes b'_{il}=\sum_{l} b_{il}\otimes b'_{il}r$ in $A\otimes_{R}A$, $(pu)(\sum_{l} b_{il}\otimes b'_{il}r)=\sum_{l} rb_{il}\otimes b'_{il}=v(\sum_{l} rb_{il}\otimes b'_{il})$. Since $v$ is injective and $pu=v(\nu\otimes 1)$, $(\nu\otimes 1)(\sum_{l} b_{il}\otimes b'_{il}r)=\sum_{l} rb_{il}\otimes b'_{il}$. Then $(h\otimes 1)(\nu\otimes 1)(\delta_{q}^{i}\otimes 1)(r)=(h\otimes 1)(\sum_{l} rb_{il}\otimes b'_{il})=\sum_{l,j}\overline{x_j \gamma_j b_{il}}\otimes b'_{il}=\sum_{j}x_j\cdot(\sum_{l}\overline{\gamma_j b_{il}}\otimes b'_{il})=\sum_{j}x_j\cdot(\sum_{l}\overline{b_{il}}\otimes b'_{il}\gamma_j)=\sum_{j}(\sum_{l}\overline{x_j b_{il}}\otimes b'_{il})\cdot\gamma_j=\sum_{j}(\sum_{l}\overline{b_{il}}\otimes b'_{il}x_j)\cdot\gamma_j=\sum_{l,j}\overline{b_{il}}\otimes b'_{il}x_j\gamma_j=(\phi\otimes 1)(\delta_{q}^{i}\otimes 1)h(r)$. So the diagram in Figure $6$ is commutative.
\end{proof}

Recall that an $A$-module $X$ is called a relatively $R$-projective module if $X$ is isomorphic to a direct summand of $X\otimes_RA_A$. For $A$-modules $X$, $Y$ with $Y$ relatively $R$-projective, an $A$-homomorphism $f:Y\rightarrow X$ is called a relatively $R$-projective cover of $X$ if any $A$-homomorphism $g:Z\rightarrow X$ with $Z$ relatively $R$-projective factors through $f$. This is equivalent to the fact that $f$ is a split epimorphism as an $R$-homomorphism.

\begin{Prop} {\rm(Compare to \cite[Proposition 6.5]{Dugas2016})} \label{relative syzygy}
Let $\rho=-\otimes_{A}M_q:\underline{\mathrm{mod}}$-$A\rightarrow\underline{\mathrm{mod}}$-$A$ be the stable auto-equivalence of $A$ in Theorem \ref{auto-equivalence 2}. If both $A$, $R$, $B$ are elementary local $k$-algebras, then $\rho(k)$ is isomorphic to $\Omega_{R}^{q}(k)$ up to a summand of a relatively $R$-projective module. (Note that $\Omega_{R}(X)$ denotes the kernel of some relatively $R$-projective cover of $_AX$ and it is determined up to a summand of a relatively $R$-projective module.)
\end{Prop}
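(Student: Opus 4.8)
The plan is to apply $k\otimes_A-$ to the exact sequence underlying the construction of $M_q$ and to recognise the result as the start of a relative $(A,R)$-projective resolution of $k$.

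First I would splice the short exact sequences $0\to M_{p+1}\to(A\otimes_RA)^{m_p}\oplus P_p\to M_p\to 0$ coming from (the Assumption~2 analogue of) Lemma~\ref{from complex to triangles}, with $M_0=A$ and the $P_p$ projective over $A^e$, into a single exact sequence of $A$-$A$-bimodules
\begin{equation*}
0\to M_q\to(A\otimes_RA)^{m_{q-1}}\oplus P_{q-1}\to\cdots\to(A\otimes_RA)^{m_0}\to A\to 0 .
\end{equation*}
Every term other than $M_q$ is projective as a left $A$-module, hence (as $A$ is self-injective) injective as a left $A$-module, so each short exact piece is split over the left and the whole sequence is left-$A$-split. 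Therefore $k\otimes_A-$ leaves it exact, giving in $\mathrm{mod}$-$A$
\begin{equation*}
0\to\rho(k)=k\otimes_AM_q\to Q_{q-1}\to\cdots\to Q_0\to k\to 0,\qquad Q_p=(k\otimes_RA)^{m_p}\oplus(k\otimes_AP_p).
\end{equation*}
Since $R$ is local, $k\cong R/\mathrm{rad}R$ as a right $R$-module, so $k\otimes_RA\cong(R/\mathrm{rad}R)\otimes_RA=T$, and $k\otimes_AP_p$ is projective over $A$. Thus each $Q_p$ is a sum of copies of $T$ and a projective $A$-module, and in particular is relatively $R$-projective: $T$ is induced from $R$, and every projective $A$-module is relatively $R$-projective because the multiplication $A\otimes_RA\to A$ has the right $A$-linear section $a\mapsto 1\otimes a$, making $A_A$ a summand of the induced module $(A\otimes_RA)_A$. (These short exact sequences are, up to projective bimodule summands, the ones whose images under the triangulated functor $k\otimes_A-\colon\underline{\mathrm{lrp}}(A)\to\underline{\mathrm{mod}}$-$A$ are the triangles of Lemma~\ref{from complex to triangles}.)

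Second I would show that the displayed sequence is split as a sequence of right $R$-modules. Granting this, the sequence is the initial segment of a relative $(A,R)$-projective resolution of $k$, so by the standard relative comparison (a relative Schanuel's lemma) its $q$-th relative syzygy $\rho(k)$ agrees with $\Omega_R^q(k)$ up to a relatively $R$-projective summand, which is the assertion. For the splitness, recall that $T_R=(R/\mathrm{rad}R)\otimes_RA_R$ is semisimple, being a sum of copies of the simple $R$-module $R/\mathrm{rad}R$ (here $A_R$ is free over the local ring $R$), while projective $A$-modules restrict to projective, hence injective, $R$-modules. In the situations where all $P_p$ can be taken to be $0$ — in particular under Assumption~1, and whenever the defining complex $(A\otimes_RA)^{m_{q-1}}\to\cdots\to A$ extends on the left to an exact complex, which makes the maps $f_p$ of Lemma~\ref{from complex to triangles} surjective — this is immediate: then $Q_p=T^{m_p}$ and $k\otimes_AM_{p+1}\hookrightarrow T^{m_p}$, so every intermediate syzygy $N_p:=k\otimes_AM_p$ is semisimple over $R$, and each surjection $T^{m_p}\to N_p$ restricts to a surjection of semisimple $R$-modules, which splits. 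Replacing the $M_p$ by projective $A^e$-summands in this reduction does not change $\rho$.

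For an explicit form of $\Omega_R^q(k)$ one can observe that $T=A/(\mathrm{rad}R)A\twoheadrightarrow A/\mathrm{rad}A=k$ is a relative $R$-projective cover: it is $R$-split because $T_R$ and $k_R$ are semisimple, and (as $A$ is local) any $A$-endomorphism of $T$ compatible with it is left multiplication by a unit, hence an automorphism; thus $\Omega_R(k)\cong\mathrm{rad}A/(\mathrm{rad}R)A$, and one iterates.

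The step I expect to be the main obstacle is the right-$R$-splitness of $0\to\rho(k)\to Q_{q-1}\to\cdots\to k\to 0$. It is transparent once the bimodules $P_p$ vanish, since the intermediate syzygies are then semisimple over $R$; carrying out this reduction — or, failing it, controlling the right $R$-module structure of the $P_p$ and of the resulting intermediate syzygies — in the full generality of Assumption~2, using conditions $(a')$, $(d)$ and $(e)$, is where the technical work concentrates.
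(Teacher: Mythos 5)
Your overall strategy --- apply $k\otimes_A-$ to the spliced bimodule sequence, recognise the result as the start of a relative $(A,R)$-projective resolution of $k$, and conclude by relative Schanuel --- is the right one, but the proof is not complete: the step you yourself flag as ``the main obstacle'' (right-$R$-splitness of the resulting sequence) is precisely where the content of the proposition lies, and your sketch of it does not go through. Two of the supporting claims are also incorrect. First, $T_R=(A/(\mathrm{rad}R)A)_R$ need not be semisimple: that would require $A\cdot\mathrm{rad}R\subseteq(\mathrm{rad}R)A$, which is not part of Assumption~1 or~2 (only $B$, not all of $A$, is assumed to commute suitably with $R$). Condition $(b)$ only gives that $T_R$ is \emph{stably} isomorphic to the semisimple module $B\otimes_k(R/\mathrm{rad}R)$, i.e.\ semisimple up to projective summands, and a submodule of such a module need not be semisimple, so the intermediate syzygies $N_p$ need not be semisimple over $R$ and your ``surjection of semisimple modules splits'' argument breaks down even when all $P_p=0$. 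Second, the $P_p$ cannot in general be taken to be zero under Assumption~1: Lemma~\ref{from complex to triangles} introduces them exactly because the maps $f_p$ need not be surjective, so the reduction you hope for is not available.

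The missing idea, which is how the paper proceeds, is to obtain the splitness by \emph{comparison with the $B$-side} rather than by inspecting the $R$-module structure of the terms directly. Applying $k\otimes_B-$ to the periodic resolution of $B$ gives a complex $0\to k\to B^{m_{q-1}}\to\cdots\to B^{m_0}\to k\to 0$ that is split exact over $R$ (the $R$-action factors through $R\to k$), and the isomorphism $\phi:B\to k\otimes_RA$ of condition $(b)$, which is an isomorphism in $\underline{\mathrm{mod}}$-$R$, identifies the triangles $k\otimes_AM_{l+1}\to(k\otimes_RA)^{m_l}\to k\otimes_AM_l\to$ (restricted to $\underline{\mathrm{mod}}$-$R$, where the projective terms $k\otimes_AP_l$ disappear) with the split triangles coming from the $B$-complex. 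Hence each $\underline{1\otimes f_l}$ is a split epimorphism in $\underline{\mathrm{mod}}$-$R$; adjoining the projective cover $\pi_l:P_l\to k\otimes_AM_l$ then upgrades this to an honest split epimorphism of $R$-modules, i.e.\ $(1\otimes f_l,\pi_l)$ is a relatively $R$-projective cover, and the iteration $k\otimes_AM_{l+1}\cong\Omega_R(k\otimes_AM_l)$ follows. Without this comparison step (or an equivalent argument), your proof does not establish the statement.
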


\begin{proof}
Since $R/radR=k$, we have an isomorphism $\phi:B\rightarrow k\otimes_{R}A$, $b\mapsto 1\otimes b$ in $\underline{\mathrm{mod}}$-$R$, where the $R$-module structure of $B$ is induced from the epimorphism $R\rightarrow k$. Applies the functors $k\otimes_{B}-$ and $k\otimes_{A}-$ to the complex $0\rightarrow B\xrightarrow{\delta_q} (B\otimes_{k}B)^{m_{q-1}}\xrightarrow{\delta_{q-1}}\cdots\rightarrow (B\otimes_{k}B)^{m_{1}}\xrightarrow{\delta_1} (B\otimes_{k}B)^{m_{0}}\xrightarrow{\delta_0} B\rightarrow 0$ and the complex $(A\otimes_{R}A)^{m_{q-1}}\xrightarrow{d_{q-1}}\cdots\rightarrow (A\otimes_{R}A)^{m_{1}}\xrightarrow{d_{1}} (A\otimes_{R}A)^{m_{0}}\xrightarrow{d_{0}} A$ respectively, we get a commutative diagram in $\mathrm{mod}$-$R$:

$$\xymatrix@R=2pc@C=2pc@W=0mm{
 0 \ar[r] & k \ar[r]^(0.3){1\otimes \delta_{q}} & B^{m_{q-1}}\ar[r]^(0.7){1\otimes \delta_{q-1}}\ar[d]_{\phi^{m_{q-1}}} & \cdots\ar[r] & B^{m_{1}}\ar[r]^{1\otimes \delta_{1}}\ar[d]_{\phi^{m_{1}}} & B^{m_{0}}\ar[r]^(0.6){1\otimes \delta_{0}}\ar[d]_{\phi^{m_{0}}} & k\ar[r]\ar@{=}[d] & 0 \\
  & & (k\otimes_{R}A)^{m_{q-1}}\ar[r]^(0.7){1\otimes d_{q-1}} & \cdots\ar[r] & (k\otimes_{R}A)^{m_{1}}\ar[r]^{1\otimes d_{1}} & (k\otimes_{R}A)^{m_{0}}\ar[r]^(0.7){1\otimes d_{0}} & k &  \\
 }.$$
Since the first row of the diagram is split exact as a complex of $k$-modules, it is also split exact as a complex of $R$-modules. Similar to the argument in Step 1.1 of the proof of Theorem \ref{auto-equivalence}, we have isomorphisms of split triangles
$$\xymatrix@R=2pc@C=2pc@W=0mm{
 L_{l+1}\ar[r]\ar[d] & B^{m_{l}}\ar[r]\ar[d]_{\underline{\phi^{m_{l}}}} &L_l\ar[r]\ar[d] & \\
 k\otimes_{A}M_{l+1} \ar[r]^{\underline{1\otimes i_{l+1}}} & (k\otimes_{R}A)^{m_{l}}\ar[r]^{\underline{1\otimes f_{l}}} & k\otimes_{A}M_l\ar[r] & \\
 }$$
in $\underline{\mathrm{mod}}$-$R$ for $0\leq l\leq q-1$, where $L_0=L_q=k$, $M_0=A$, $f_0=d_0$. Therefore $\underline{1\otimes f_{l}}:(k\otimes_{R}A)^{m_{l}}\rightarrow k\otimes_{A}M_l$ are split epimorphisms in $\underline{\mathrm{mod}}$-$R$ for $0\leq l\leq q-1$.

For every $0\leq l\leq q-1$ and for every $R$-module $X_R$, we have a commutative diagram
$$\xymatrix@R=2pc@C=6pc@W=0mm{
 \underline{\mathrm{Hom}}_{A}(FX,(k\otimes_{R}A)^{m_{l}})\ar[d]\ar[r]^{\underline{\mathrm{Hom}}_{A}(FX,\underline{1\otimes f_{l}})}& \underline{\mathrm{Hom}}_{A}(FX,k\otimes_{A}M_l)\ar[d] \\
 \underline{\mathrm{Hom}}_{R}(X,(k\otimes_{R}A)_{R}^{m_{l}})\ar[r]^{\underline{\mathrm{Hom}}_{R}(X,\underline{1\otimes f_{l}})}& \underline{\mathrm{Hom}}_{R}(X,(k\otimes_{A}M_l)_{R}) \\
 },$$
where the vertical arrows are isomorphisms. Since $\underline{1\otimes f_{l}}:(k\otimes_{R}A)^{m_{l}}\rightarrow k\otimes_{A}M_l$ is a split epimorphism in $\underline{\mathrm{mod}}$-$R$, $\underline{\mathrm{Hom}}_{R}(X,(k\otimes_{R}A)_{R}^{m_{l}})\rightarrow \underline{\mathrm{Hom}}_{R}(X,(k\otimes_{A}M_l)_{R})$ is surjective, therefore $\underline{\mathrm{Hom}}_{A}(FX,(k\otimes_{R}A)^{m_{l}})\rightarrow \underline{\mathrm{Hom}}_{A}(FX,k\otimes_{A}M_l)$ is surjective. Then the morphism $\underline{1\otimes f_{l}}:(k\otimes_{R}A)^{m_{l}}\rightarrow k\otimes_{A}M_l$ is a right $F(\underline{\mathrm{mod}}$-$R)$-approximation. It follows that the $A$-homomorphism $(1\otimes f_{l},\pi_l):(k\otimes_{R}A)^{m_{l}}\oplus P_l\rightarrow k\otimes_{A}M_l$ is a relatively $R$-projective cover of $k\otimes_{A}M_l$, where $\pi_l:P_l\rightarrow k\otimes_{A}M_l$ is the projective cover of $k\otimes_{A}M_l$. By the triangle $k\otimes_{A}M_{l+1} \xrightarrow{\underline{1\otimes i_{l+1}}}  (k\otimes_{R}A)^{m_{l}}\xrightarrow{\underline{1\otimes f_{l}}} k\otimes_{A}M_l\rightarrow$ in $\underline{\mathrm{mod}}$-$A$, we see that $k\otimes_{A}M_{l+1}\cong\Omega_{R}(k\otimes_{A}M_l)$. Therefore $\rho(k)=k\otimes_{A}M_{q}\cong\Omega_{R}(k\otimes_{A}M_{q-1})\cong\cdots\cong \Omega_{R}^{q}(k)$.
\end{proof}

\begin{Rem1}\label{remark}
Since the stable auto-equivalence in Theorem \ref{auto-equivalence} is a special case of the stable auto-equivalence in Theorem \ref{auto-equivalence 2}, it also satisfies Proposition \ref{relative syzygy}.
\end{Rem1}

\section{Endo-trivial modules over finite p-groups}

Let $k$ be a field of characteristic $p$ with $p$ prime, $P$ be a finite $p$-group and $kP$ be its group algebra. A $kP$-module $M$ is called endo-trivial if $\mathrm{End}_{k}(M)\cong k\oplus P$ for some projective module $P$. Two endo-trivial modules $M$, $N$ are said to be equivalent if $M\oplus P\cong N\oplus Q$ for some projective modules $P$, $Q$. The group $T(P)$ has elements consisting of equivalence classes $[M]$ of endo-trivial modules $M$. The operation is given by $[M]+[N]=[M\otimes_{k}N]$, see \cite[Section 3]{CT2004}.

Note that the stable auto-equivalences of Morita type of $kP$ are precisely induced by endo-trivial modules. The next proposition shows that in most cases, our construction recovers all the stable auto-equivalences of $kP$ corresponding to endo-trivial modules.

Let $A=kP$ and $R=kS$, $B=kL$ for some subgroups $S$, $L$ of $P$. Suppose that the triple $(A,R,B)$ satisfies Assumption 1 of Section 3. Let $\rho_{S,L}:=-\otimes_{A}M_q:\underline{\mathrm{mod}}$-$A\rightarrow\underline{\mathrm{mod}}$-$A$ be the stable auto-equivalence of $A$ in Theorem \ref{auto-equivalence}. Since $\underline{\mathrm{End}}_{A}(\rho_{S,L}(k))\cong\underline{\mathrm{End}}_{A}(k)\cong k$, by \cite[Theorem 1]{C1998}, $\rho_{S,L}(k)$ is an endo-trivial module.

\begin{Prop}\label{generator of T(P)}
Let $P$ be a finite $p$-group which is not generalized quaternion. Then there exist finitely many pairs ($S_i$, $L_i$) of subgroups of $P$ such that the following conditions hold: \\
(1) Each pair ($S_i$, $L_i$) gives a triple $(A,kS_i,kL_i)$ satisfying Assumption 1; \\
(2) $T(P)$ is generated by $[\Omega_{kP}(k)]$ and elements of the form $[\rho_{S_i,L_i}(k)]$, where $\rho_{S_i,L_i}$ is the stable auto-equivalence of $A=kP$ defined as above.
\end{Prop}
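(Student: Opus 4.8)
The plan is to reduce, via the classification of endo-trivial modules, to a realization problem, and then to solve it with the triples of Section~3 together with Proposition~\ref{relative syzygy}. Since $P$ is not generalized quaternion, the Carlson--Th\'evenaz classification furnishes a \emph{finite} set of endo-trivial $kP$-modules generating $T(P)$: the class $[\Omega_{kP}(k)]$, together with the classes of (Green correspondents of) relative syzygies $\Omega_{P/Q_j}(k)=\ker(k[P/Q_j]\to k)$ for finitely many subgroups $Q_j$, each of which may be taken to be a trivial-intersection subgroup with $[N_P(Q_j):Q_j]$ small; for $p$ odd, and for many $2$-groups, the second family is empty (or torsion of order prime to $2$), the genuinely interesting case being dihedral, semidihedral and similar $2$-groups. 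So it suffices to produce, for each $Q_j$, a pair $(S_j,L_j)$ of subgroups of $P$ such that $(kP,kS_j,kL_j)$ satisfies Assumption~1 and $[\rho_{S_j,L_j}(k)]$ is the prescribed generator.

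\textbf{Choosing the triples.} For a given $Q_j$ I would take $S_j=Q_j$ and choose $L_j\le C_P(Q_j)$ to be a direct complement of $Q_j$ inside its normalizer, that is, $N_P(Q_j)=Q_j\times L_j$ with $L_j$ cyclic or (when $p=2$) generalized quaternion. Condition $(a)$ is then the centralizing hypothesis; condition $(c)$ holds because a finite $p$-group has a periodic free bimodule resolution precisely when it is cyclic or generalized quaternion; and condition $(b)$ holds because, $Q_j$ being a trivial-intersection subgroup, the right $kQ_j$-module $A/(\mathrm{rad}(kQ_j))A\cong k[Q_j\backslash P]$ splits as a free $kQ_j$-module plus $[N_P(Q_j):Q_j]$ copies of the trivial module, and $\phi$ carries $B=kL_j$ onto the trivial part, which forces $\dim_k L_j=[N_P(Q_j):Q_j]$ and is arranged by the choice of $L_j$. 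Whenever $[N_P(Q_j):Q_j]=2$, I would take $L_j\cong C_2$, whose bimodule resolution has period $q_j=1$. This settles $(1)$.

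\textbf{Identification and the main obstacle.} By Proposition~\ref{relative syzygy} together with Remark~\ref{remark} --- applicable since $kP$, $kS_j$, $kL_j$ are elementary local --- one has $\rho_{S_j,L_j}(k)\cong\Omega_{kS_j}^{\,q_j}(k)$ up to a relatively $kS_j$-projective summand, where $\Omega_{kS_j}$ is the syzygy operator relative to $kS_j$; this endo-trivial module is precisely of the kind appearing in the classification, and for $p=2$ with $L_j\cong C_2$ one has $q_j=1$ and recovers $\Omega_{P/Q_j}(k)$ itself. Thus $[\Omega_{kP}(k)]$ and the classes $[\rho_{S_j,L_j}(k)]$ exhaust a generating set of $T(P)$, which is $(2)$. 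The main obstacle is the group theory underlying the second paragraph: one must run through the Carlson--Th\'evenaz classification and, for each generator, genuinely exhibit a trivial-intersection subgroup $Q_j$ carrying that generator whose normalizer splits as $Q_j\times(\text{cyclic or generalized quaternion})$ --- and, when $p=2$, with $[N_P(Q_j):Q_j]=2$ so that $q_j=1$; for $p$ odd, where only cyclic $L_j$ (hence $q_j=2$) are available, one relies on the extra generators there being torsion of order prime to $2$, on which multiplication by $2$ is invertible. This case check is the crux, and it is exactly where the exclusion of generalized quaternion $P$ enters: for $P$ generalized quaternion the torsion of $T(P)$ is not carried by trivial-intersection relative syzygies admitting such a normalizer decomposition, and the construction cannot reach it.
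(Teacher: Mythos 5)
Your overall skeleton matches the paper's: invoke the Carlson--Th\'evenaz classification to get a finite generating set of $T(P)$, realize each extra generator by a triple $(kP,kS_i,kL_i)$ with $S_i$ of order $p$ and $C_P(S_i)=S_i\times L_i$ ($L_i$ cyclic or generalized quaternion), verify Assumption~1 via the double-coset/trivial-intersection computation, and identify $\rho_{S_i,L_i}(k)$ with a relative syzygy via Proposition~\ref{relative syzygy}. The verification of condition (1) in your second paragraph is essentially the paper's argument and is fine.

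There is, however, a genuine gap in your treatment of (2). You assert that the extra generators are the relative syzygies $\Omega_{P/Q_j}(k)$ themselves, that for $p$ odd ``the second family is empty (or torsion of order prime to $2$)'', and you then bridge the mismatch between $\Omega_{P/Q_j}(k)$ and the module $\Omega_{P/Q_j}^{q_j}(k)$ your construction actually produces by inverting multiplication by $2$ on torsion. This is false: by \cite[Theorem 7.1]{CT2004}, when $P$ has a maximal elementary abelian subgroup of rank $2$ (which happens for odd $p$, e.g.\ extraspecial groups), $T(P)$ is \emph{free abelian} of rank $r$, so the extra generators have infinite order and $[\Omega_{P/S}^{2}(k)]=2[\Omega_{P/S}(k)]$ modulo $\langle[\Omega_{kP}(k)]\rangle$ does not generate $[\Omega_{P/S}(k)]$. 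The resolution is that the Carlson--Th\'evenaz generators are not the relative syzygies but the modules $N_i=\Gamma(M_i^{\otimes n_i})$ with $M_i=\Omega_{kP}^{-1}(k)\otimes_k\Omega_{P/S_i}(k)$ and $n_i=2,1,4$ according as $L_i$ is cyclic of order $\geq 3$, cyclic of order $2$, or generalized quaternion --- and these exponents coincide exactly with the bimodule period $q_i$ of $kL_i$, so $\rho_{S_i,L_i}(k)\cong\Omega_{P/S_i}^{n_i}(k)$ yields $[N_i]=[\Omega_{kP}^{-n_i}(k)]+[\rho_{S_i,L_i}(k)]$ directly, with no divisibility argument. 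You would also need to treat separately the semidihedral case (where \cite[Theorem 7.1]{CT2004} does not apply and one uses \cite[Theorem 7.1]{CT2000}, the torsion generator being $[\Omega_{kP}(L)]$ with $L=\Omega_{P/S}(k)$) and the case where $P$ is cyclic or every maximal elementary abelian subgroup has rank at least $3$ (where $T(P)=\langle[\Omega_{kP}(k)]\rangle$ by Dade and \cite{CT2005}); you gesture at both but do not carry them out.
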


In the following, for a subgroup $H$ of a group $G$, we denote by $N_{G}(H)$ and $C_{G}(H)$ the normalizer and the centralizer of $H$ in $G$ respectively.

\begin{Lem}\label{double coset}
Let $G$ be a group, $H$ be a subgroup of $G$ of order $p$ with $p$ prime. Then for every $g\in G$,
\begin{equation}
|HgH|= \begin{cases}
p, & \text{if } g\in N_{G}(H); \\
p^2, & otherwise.
\end{cases}
\end{equation}
\end{Lem}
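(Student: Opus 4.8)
The plan is to reduce everything to the standard counting formula for a double coset together with the fact that a group of prime order has no proper nontrivial subgroups.

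First I would establish (or recall) that for any $g\in G$ the set $HgH$ is a finite union of left cosets of $H$ and that $|HgH|=|H|^{2}/|H\cap gHg^{-1}|$. This is a routine orbit--stabilizer argument: let $H$ act by left multiplication on the set of left cosets $\{xH : x\in HgH\}$. This action is transitive, and the stabilizer of the coset $gH$ consists of those $h\in H$ with $hgH=gH$, i.e. with $g^{-1}hg\in H$, i.e. with $h\in gHg^{-1}$; so the stabilizer is $H\cap gHg^{-1}$. Hence the orbit has $|H|/|H\cap gHg^{-1}|$ elements, each contributing a left coset of size $|H|=p$, and $|HgH|=p^{2}/|H\cap gHg^{-1}|$. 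Note that only the finiteness of $H$ is used here, so $G$ itself need not be finite.

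Next I would analyze the subgroup $H\cap gHg^{-1}$ of $H$. Since $|H|=p$ is prime, Lagrange's theorem forces $|H\cap gHg^{-1}|\in\{1,p\}$, with the value $p$ occurring exactly when $H\cap gHg^{-1}=H$, that is, when $H\subseteq gHg^{-1}$. As both $H$ and $gHg^{-1}$ have order $p$, the inclusion $H\subseteq gHg^{-1}$ is equivalent to $H=gHg^{-1}$, i.e. to $g\in N_{G}(H)$. Substituting back into the formula $|HgH|=p^{2}/|H\cap gHg^{-1}|$ gives $|HgH|=p$ when $g\in N_{G}(H)$ and $|HgH|=p^{2}$ otherwise, which is the claim.

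There is no real obstacle here; the only point requiring a little care is the justification of the double coset cardinality formula, and even that is a short orbit--stabilizer computation. The essential input is simply that $H$ has prime order, so that the intersection $H\cap gHg^{-1}$ is forced to be either trivial or all of $H$.
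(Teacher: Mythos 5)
Your proof is correct and follows essentially the same route as the paper: both reduce to the identity $|HgH|=|H|^{2}/|H\cap gHg^{-1}|$ and then use primality of $|H|$ to force the intersection to be trivial or all of $H$. The only cosmetic difference is that the paper obtains this identity by translating to $|g^{-1}Hg\cdot H|$ and invoking the product formula for two subgroups, whereas you derive the double coset formula directly by orbit--stabilizer; both are fine.
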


\begin{proof}
If $g\notin N_{G}(H)$, then $g^{-1}Hg\neq H$. Since $|g^{-1}Hg|=|H|=p$, we have $|g^{-1}Hg\cap H|=1$. Therefore $|HgH|=|g^{-1}HgH|=\frac{|g^{-1}Hg||H|}{|g^{-1}Hg\cap H|}=p^2$.
\end{proof}

\begin{Lem}\label{centralizer and normalizer}
Let $P$ be a finite $p$-group and $H$ be a subgroup of $P$ order $p$, then $C_{P}(H)=N_{P}(H)$.
\end{Lem}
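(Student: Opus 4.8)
The plan is to invoke the standard $N/C$ argument. Since the centralizer is always contained in the normalizer, we have $C_{P}(H)\subseteq N_{P}(H)$ automatically, so it suffices to prove the reverse inclusion $N_{P}(H)\subseteq C_{P}(H)$.

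First I would let $N_{P}(H)$ act on $H$ by conjugation. Because $N_{P}(H)$ normalizes $H$, this action is well defined and yields a group homomorphism $c\colon N_{P}(H)\to\mathrm{Aut}(H)$, whose kernel is, by definition, precisely $C_{P}(H)$. Hence $N_{P}(H)/C_{P}(H)$ embeds into $\mathrm{Aut}(H)$.

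Next, since $|H|=p$, the group $H$ is cyclic of order $p$, so $\mathrm{Aut}(H)$ is cyclic of order $p-1$. On the other hand $N_{P}(H)$ is a subgroup of the finite $p$-group $P$, hence is itself a $p$-group, and therefore so is its quotient $N_{P}(H)/C_{P}(H)$. A $p$-group that embeds into a group of order $p-1$ must be trivial, since $\gcd(p,p-1)=1$ forces its order (a power of $p$ dividing $p-1$ by Lagrange) to be $1$. Thus $N_{P}(H)/C_{P}(H)=1$, i.e.\ $N_{P}(H)=C_{P}(H)$.

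There is essentially no serious obstacle here: the argument is the classical $N/C$ lemma specialized to a subgroup of prime order inside a $p$-group. The only point requiring (minimal) care is the final coprimality observation, which is what makes the $p$-group $N_{P}(H)/C_{P}(H)$ collapse to the trivial group.
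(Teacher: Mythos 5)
Your proof is correct and follows exactly the same route as the paper: the conjugation homomorphism $N_{P}(H)\to\mathrm{Aut}(H)$ with kernel $C_{P}(H)$, the fact that $|\mathrm{Aut}(H)|=p-1$, and the coprimality of the $p$-power index $[N_{P}(H):C_{P}(H)]$ with $p-1$. Nothing to add.
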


\begin{proof}
There is a group homomorphism $\phi:N_{P}(H)\rightarrow Aut(H)$ such that $\phi(g)(h)=ghg^{-1}$ for all $g\in N_{P}(H)$ and $h\in H$. Moreover, the kernel of $\phi$ is $C_{P}(H)$. Since $Aut(H)\cong Aut(\mathbb{Z}/p\mathbb{Z})\cong\mathbb{Z}/p\mathbb{Z}^{\times}$, $|Aut(H)|=p-1$. Therefore $[N_{P}(H):C_{P}(H)]$ divides $p-1$. Since $[N_{P}(H):C_{P}(H)]$ is a power of $p$, it must equal to $1$.
\end{proof}

\begin{Lem}\label{periodic resolution}
Let $G$ be a finite group. If the trivial $G$-module $k$ has a periodic free resolution of periodic $n$, then $kG$ has a periodic free resolution as $kG$-$kG$-bimodule of the same periodic.
\end{Lem}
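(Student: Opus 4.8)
The plan is to identify the regular $kG$-bimodule with an induced module for the group algebra $k[G\times G]$ and then push forward a periodic free resolution of the trivial module along induction. Recall the standard facts: the enveloping algebra $(kG)^e=(kG)^{op}\otimes_k kG$ is isomorphic as a $k$-algebra to $k[G\times G]$ (use the anti-isomorphism $g\mapsto g^{-1}$ to identify $(kG)^{op}$ with $kG$, and then $kG\otimes_k kG\cong k[G\times G]$), and under this identification the regular bimodule ${}_{kG}kG_{kG}$, equivalently $kG$ viewed as a right $(kG)^e$-module, corresponds to the permutation module $k[(G\times G)/\Delta]\cong\mathrm{Ind}_{\Delta}^{G\times G}(k)$, where $\Delta=\{(g,g)\mid g\in G\}\leq G\times G$ is the diagonal subgroup. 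This holds because the stabilizer of $1\in kG$ under the left-right translation action of $G\times G$ is exactly $\Delta$, and $|G\times G:\Delta|=|G|=\dim_k kG$.

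First I would fix an isomorphism $\Delta\cong G$ and transport the given periodic free resolution $0\to k\to (kG)^{m_{n-1}}\to\cdots\to (kG)^{m_0}\to k\to 0$ of the trivial right $kG$-module into a periodic free resolution of the trivial right $k\Delta$-module of the same period $n$. I would then apply the induction functor $\mathrm{Ind}_{\Delta}^{G\times G}=-\otimes_{k\Delta}k[G\times G]$. Since $k[G\times G]$ is free, hence flat, as a left $k\Delta$-module of rank $|G\times G:\Delta|=|G|$, this functor is exact, so it preserves exactness of the resolution; moreover it sends the regular module $k\Delta$ to $k[G\times G]$, hence sends each free term $(k\Delta)^{m_i}$ to the free $k[G\times G]$-module $(k[G\times G])^{m_i}$, and it sends the trivial module $k$ to $\mathrm{Ind}_{\Delta}^{G\times G}(k)\cong kG$ by the identification above.

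Assembling these facts, I obtain an exact sequence $0\to kG\to (k[G\times G])^{m_{n-1}}\to\cdots\to (k[G\times G])^{m_0}\to kG\to 0$ of $k[G\times G]$-modules whose middle terms are free. Translating back through the algebra isomorphisms $k[G\times G]\cong (kG)^e\cong kG\otimes_k kG$, this is exactly a periodic free resolution of $kG$ as a $kG$-$kG$-bimodule, of period $n$, which is the asserted conclusion.

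The content here is entirely routine; the one point that requires care is the very first step, namely the precise identification of the regular bimodule with the induced module $\mathrm{Ind}_{\Delta}^{G\times G}(k)$, together with keeping the left/right module conventions consistent when passing between right $(kG)^e$-modules, $kG$-bimodules, and $k[G\times G]$-modules. Once that bookkeeping is in place, the exactness and the free-preserving property of the induction functor do all the work, so I do not expect a genuine obstacle.
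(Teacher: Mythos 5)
Your proof is correct and is essentially the paper's argument in different clothing: the paper's functor $\Phi(X)=X\otimes_k kG$ with the twisted bimodule structure is precisely the induction functor $\mathrm{Ind}_{\Delta}^{G\times G}$ under the standard identification of $kG$-bimodules with $k[G\times G]$-modules, and both proofs rest on the same three facts (exactness, $\Phi(kG)\cong kG\otimes_k kG$ free, and $\Phi(k)\cong kG$). No gap; the bookkeeping point you flag about the diagonal stabilizer is exactly the isomorphism $g\otimes h\mapsto hg^{-1}\otimes g$ that the paper writes out explicitly.
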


\begin{proof}
For $X\in \mathrm{mod}$-$kG$, define a $kG$-$kG$-bimodule structure on $X\otimes_{k}kG$ by the formulas $g\cdot (x\otimes\mu)=x\otimes g\mu$ and $(x\otimes\mu)\cdot g=xg\otimes\mu g$. It can be shown that the map $X\mapsto X\otimes_{k}kG$ defines a functor $\Phi$ from $\mathrm{mod}$-$kG$ to $kG$-$\mathrm{mod}$-$kG$. Since the trivial $G$-module $k$ has a periodic free resolution, there exists an exact sequence $0\rightarrow k\rightarrow F_{n-1}\rightarrow\cdots\rightarrow F_1\rightarrow F_0\rightarrow k\rightarrow 0$ of $kG$-modules, where $F_0$, $\cdots$, $F_{n-1}$ are free $kG$-modules. Let $M=kG\otimes_{k}kG$ be the free $kG$-$kG$-bimodule of rank 1. Then the map $\Phi(kG)\rightarrow M$, $g\otimes h\mapsto hg^{-1}\otimes g$ is an isomorphism of $kG$-$kG$-bimodules. So $\Phi$ sends free $kG$-modules to free $kG$-$kG$-bimodules. Applies the functor $\Phi$ to the exact sequence $0\rightarrow k\rightarrow F_{n-1}\rightarrow\cdots\rightarrow F_1\rightarrow F_0\rightarrow k\rightarrow 0$, we get an exact sequence $0\rightarrow \Phi(k)\rightarrow \Phi(F_{n-1})\rightarrow\cdots\rightarrow \Phi(F_1)\rightarrow \Phi(F_0)\rightarrow \Phi(k)\rightarrow 0$ of $kG$-$kG$-bimodules with $\Phi(F_0)$, $\cdots$, $\Phi(F_{n-1})$ free. Note that $\Phi(k)\cong kG$ as $kG$-$kG$-bimodules.
\end{proof}

\begin{proof}[{\bf Proof of Proposition \ref{generator of T(P)}}]

\medskip
{\it Case 1: Assume that $P$ is a finite $p$-group having a maximal elementary abelian subgroup of rank $2$.}

\medskip
{\it Case 1.1: $P$ is not semi-dihedral.}

By \cite[Theorem 7.1]{CT2004}, $T(P)$ is a free abelian group generated by the classes of the modules $\Omega_{kP}(k)$, $N_2$, $\cdots$, $N_r$, where $r$ is the number of conjugacy classes of connected components of the poset of all elementary abelian subgroups of $P$ of rank at least 2 and the $N_i$ are defined as follows. For $2\leq i\leq r$, let $S_i$ be the subgroups of $P$ of order $p$ in \cite[Lemma 2.2(b)]{CT2004} with $C_{P}(S_i)=S_i\times L_i$, where $L_i$ either cyclic or generalized quaternion. Let $M_i=\Omega_{kP}^{-1}(k)\otimes_{k}\Omega_{P/S_i}(k)$, where $\Omega_{P/S_i}(k)$ denotes the kernel of a relatively $kS_i$-projective cover of the trivial $kP$-module $k$. Define
\begin{equation}
N_i= \begin{cases}
\Gamma(M_{i}^{\otimes 2}), & \text{if } L_i \text{ is cyclic of order }\geq 3; \\
M_i, & \text{if } p=2 \text{ and } L_i \text{ is cyclic of order } 2; \\
\Gamma(M_{i}^{\otimes 4}), & \text{if } p=2 \text{ and } L_i \text{ is generalized quaternion},
\end{cases}
\end{equation}
where $\Gamma(M)$ denotes the sum of all the indecomposable summands of $M$ having vertex $P$. Let $A=kP$ and $R_i=kS_i$, $B_i=kL_i$ for $2\leq i\leq r$. Note that $R_i/radR_i\cong k$. Since $L_i\leq C_{P}(S_i)$, we have $br=rb$ for any $b\in B_i$ and $r\in R_i$. Let $h_1$, $\cdots$, $h_q$ be a complete set of double coset representatives for $S_i$ in $P$ which not belong to $N_{P}(S_i)$. Since $P$ is a $p$-group and $S_i$ is a subgroup of $P$ of order $p$, by Lemma \ref{centralizer and normalizer}, $N_{P}(S_i)=C_{P}(S_i)$. Therefore $P$ is a disjoint union of double cosets $S_igS_i=gS_i$ with $g\in L_i$ and double cosets $S_i h_n S_i$ with $1\leq n\leq q$. By Lemma \ref{double coset}, $|S_i h_n S_i|=p^2$ for $1\leq n\leq q$, therefore the $R_i$-$R_i$-subbimodule $kS_i h_n S_i$ of $A$ is isomorphic to $R_i\otimes_{k}R_i$. We have $A/(radR_i)A\cong (R_i/radR_i)\otimes_{R_i}A=k\otimes_{R_i}A\cong \bigoplus_{g\in L_i}k\otimes_{R_i}kgS_i\oplus\bigoplus_{n=1}^{q}k\otimes_{R_i}kS_ih_nS_i\cong k^{|L_i|}\oplus R_{i}^{q}$ as $R_i$-modules. Moreover, the $R_i$-homomorphism $\phi_i:B_i\otimes_{R_i}(R_i/radR_i)\rightarrow A/(radR_i)A$, $b\otimes 1\mapsto \overline{b}$ is isomorphic to the inclusion morphism $k^{|L_i|}\rightarrow k^{|L_i|}\oplus R_{i}^{q}$. Therefore $\phi_i$ is an isomorphism in $\underline{\mathrm{mod}}$-$R_i$.

Let $k$ denotes the trivial $L_i$-module. When $L_i$ is cyclic, then $\Omega_{kL_i}^{2}(k)\cong k$. Moreover, when $L_i$ is cyclic of order $2$, then $\Omega_{kL_i}(k)\cong k$. When $L_i$ is generalized quaternion, by \cite[Proposition 3.16]{D1972}, $\Omega_{kL_i}^{4}(k)\cong k$. Since $B_i=kL_i$ is local, the periodic projective resolution of $k$ is also a periodic free resolution. By Lemma \ref{periodic resolution}, $B_i$ has a periodic free resolution as a $B_i$-$B_i$-bimodule of periodic $n_i$, where
\begin{equation}
n_i= \begin{cases}
2, & \text{if } L_i \text{ is cyclic of order }\geq 3; \\
1, & \text{if } p=2 \text{ and } L_i \text{ is cyclic of order } 2; \\
4, & \text{if } p=2 \text{ and } L_i \text{ is generalized quaternion}.
\end{cases}
\end{equation}
Therefore the triple $(A,R_i,B_i)$ satisfies Assumption 1 in Section $3$. By Proposition \ref{relative syzygy} and Remark \ref{remark}, $\rho_{S_i,L_i}(k)\cong\Omega_{P/S_i}^{n_i}(k)$. Since $\Omega_{P/S_i}(k)^{\otimes n_i}\oplus V\cong \Omega_{P/S_i}^{n_i}(k)\oplus W$ for some relatively $kS_i$-projective modules $V$, $W$,
\begin{equation}
N_i= \begin{cases}
\Gamma(\Omega_{kP}^{-n_i}(k)\otimes_{k}\rho_{S_i,L_i}(k)), & \text{if } L_i \text{ is cyclic of order }\geq 3, \\
& \text{ or } p=2 \text{ and } L_i \text{ is generalized quaternion}; \\
\Omega_{kP}^{-n_i}(k)\otimes_{k}\rho_{S_i,L_i}(k), & \text{if } p=2 \text{ and } L_i \text{ is cyclic of order } 2.
\end{cases}
\end{equation}
When $L_i$ is cyclic of order $\geq 3$, or when $p=2$ and $L_i$ is generalized quaternion, since both $\Omega_{kP}^{-n_i}(k)\otimes_{k}\rho_{S_i,L_i}(k)$ and $\Gamma(\Omega_{kP}^{-n_i}(k)\otimes_{k}\rho_{S_i,L_i}(k))$ are endo-trivial modules, $\Omega_{kP}^{-n_i}(k)\otimes_{k}\rho_{S_i,L_i}(k)\cong\Gamma(\Omega_{kP}^{-n_i}(k)\otimes_{k}\rho_{S_i,L_i}(k))\oplus V$ for some projective $kP$-module $V$. Therefore $[\Omega_{kP}^{-n_i}(k)\otimes_{k}\rho_{S_i,L_i}(k)]=[\Gamma(\Omega_{kP}^{-n_i}(k)\otimes_{k}\rho_{S_i,L_i}(k))]$ in $T(P)$. So $T(P)$ is generated by $[\Omega_{kP}(k)]$ and $[\rho_{S_i,L_i}(k)]$ for $2\leq i\leq r$.

\medskip
{\it Case 1.2: $P$ is semi-dihedral.}

The semi-dihedral of order $2^n$ ($n\geq 4$) is given by $SD_{2^n}=\langle x,y\mid x^{2^{n-1}}=y^2=1,yxy=x^{2^{n-2}-1}\rangle$. Let $S=\langle y\rangle$ be a subgroup of $P=SD_{2^n}$. Then $C_{P}(S)=S\times S'$, where $S'=\langle x^{2^{n-2}}\rangle$. Let $A=kP$, $R=kS$, $B=kS'$. Similar to Case 1.1, the triple $(A,R,B)$ satisfies Assumption 1. Since $B$ has a free resolution of periodic $1$ as a $B$-$B$-bimodule, by Proposition \ref{relative syzygy} and Remark \ref{remark}, $\rho_{S,S'}(k)\cong\Omega_{P/S}(k)$, which is exactly the module $L$ defined in \cite[Section 7]{CT2000}. By \cite[Theorem 7.1]{CT2000}, $T(P)$ is isomorphic to $\mathbb{Z}\oplus\mathbb{Z}/2\mathbb{Z}$, generated by $[\Omega_{kP}(k)]$ and $[\Omega_{kP}(L)]$, where the element $[\Omega_{kP}(L)]$ has order 2. Therefore $[\Omega_{kP}(k)]$ together with $[\rho_{S,L}(k)]$ generates $T(P)$.

\medskip
{\it Case 2: Assume that $P$ is a finite $p$-group which do not have a maximal elementary abelian subgroup of rank $2$.}

Since $P$ is not generalized quaternion, either $P$ is cyclic or every maximal elementary abelian subgroup of $P$ has rank at least $3$ (cf. \cite[Introduction]{CT2004}). By \cite[Corollary 8.8]{D1978} and \cite[Corollary 1.3]{CT2005}, $T(P)$ is generated by $[\Omega_{kP}(k)]$. So the conclusion also holds in this case.
\end{proof}

\begin{Rem1}
An example of $p$-group which has a maximal elementary abelian subgroup of rank $2$ and which is not semi-dihedral is the dihedral group $D_8=\langle x,y\mid x^4=y^2=1$, $yxy=x^{-1}\rangle$ of order $8$, where $E=\{1,x^2,y,x^2 y\}$ is a maximal elementary abelian subgroup of $Q_8$ of rank $2$.  An example of $p$-group whose maximal elementary abelian subgroups have rank at least $3$ is $D_8*D_8=(D_8\times D_8)/\langle (x^2,x^2)\rangle$, see \cite[Section 6]{CT2004}.
\end{Rem1}

\begin{Rem1}
For every positive integer $n\geq 2$, the generalized quaternion group $Q_{4n}$ of order $4n$ is defined by the presentation $\langle x,y\mid x^{2n}=1$, $y^2=x^{n}$, $yxy^{-1}=x^{-1}\rangle$. When $n=2$ it is the usual quaternion group. The generalized quaternion group $Q_{4n}$ is a $p$-group if and only if $n$ is a power of $2$. The reason why we exclude generalized quaternion groups in Proposition \ref{generator of T(P)} is that the endo-trivial module $L$ constructed in \cite[Section 6]{CT2000} may not be a relative syzygy of the trivial $kP$-module.
\end{Rem1}

\section{Examples in non-local case}

\subsection{}

In this subsection, let $G$ be a finite group and $N$, $H$ be subgroups of $G$ such that $N_{G}(N)=N\rtimes H$ and $|NgN|=|N|^2$ for any $g\in G-N_{G}(N)$. Let $k$ be a field whose characteristic divides $|N|$, and let $A=kG$, $R=kN$, $B=kH$. Assume that the trivial $kH$-module $k$ has a periodic free resolution.

\begin{Prop}\label{auto-equivalence given by semidirect product}
The triple $(A,R,B)$ as above satisfies Assumption 2 of Section 4, so it defines a stable auto-equivalence of $A$ by Theorem \ref{auto-equivalence 2}.
\end{Prop}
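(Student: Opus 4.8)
The plan is to verify the standing hypotheses together with the five conditions $(a')$–$(e)$ of Assumption~2 for the triple $(A,R,B)=(kG,kN,kH)$ and then invoke Theorem~\ref{auto-equivalence 2}. The standing hypotheses are routine: $kG$ and $kN$ are symmetric, $kN$ is non-semisimple because $\mathrm{char}\,k$ divides $|N|$, and $kG$ is free as a right $kN$-module on a set of coset representatives, so $A_R$ is projective. For $(a')$, since $H$ normalizes $N$, conjugation by each $h\in H$ is a $k$-algebra automorphism of $kN$ and hence preserves $\mathrm{rad}(kN)$, so $h\,\mathrm{rad}(kN)=\mathrm{rad}(kN)\,h$; summing over $H$ gives $B(\mathrm{rad}\,R)=(\mathrm{rad}\,R)B$. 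For $(c)$, the trivial $kH$-module $k$ has a periodic free resolution by hypothesis, so Lemma~\ref{periodic resolution} yields a periodic free $B^e$-resolution of $B=kH$, which I fix once and for all for use in $(d)$ and $(e)$.

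For $(b)$ I would use the double coset decomposition $kG=\bigoplus_{NgN}k[NgN]$ as $kN$-$kN$-bimodules. Because $N$ is normal in $N_G(N)=N\rtimes H$ and $N\cap H=1$, the double cosets inside $N_G(N)$ are precisely the distinct cosets $Nh$, $h\in H$, and $k[Nh]$ is the regular right $kN$-module twisted by conjugation by $h$; an algebra automorphism permutes the simple modules without changing dimensions, so twisting $R/\mathrm{rad}\,R$ by it returns $R/\mathrm{rad}\,R$, whence $k[Nh]/(\mathrm{rad}\,R)k[Nh]\cong R/\mathrm{rad}\,R$. Every remaining double coset has a representative $g\notin N_G(N)$, so $|NgN|=|N|^2$, forcing $N\cap gNg^{-1}=1$ and hence $k[NgN]\cong kN\otimes_k kN$ as bimodules; its quotient by $\mathrm{rad}\,R$ acting on the left is $(R/\mathrm{rad}\,R)\otimes_k kN$, a free (hence projective) right $R$-module. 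Therefore $A/(\mathrm{rad}\,R)A\cong (R/\mathrm{rad}\,R)^{|H|}\oplus P$ with $P$ projective over $R$, and $\phi$ is the inclusion of $B\otimes_k(R/\mathrm{rad}\,R)\cong(R/\mathrm{rad}\,R)^{|H|}$ as the first summand (the same bookkeeping shows $\phi$ is well defined); its cokernel $P$ being projective, $\phi$ is an isomorphism in $\underline{\mathrm{mod}}$-$R$.

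The substance is $(d)$ and $(e)$. I would unwind the construction behind Lemma~\ref{periodic resolution}: starting from a periodic free resolution $0\to k\to F_{q-1}\to\cdots\to F_0\to k\to 0$ of the trivial $kH$-module and applying the functor $\Phi(-)=(-)\otimes_k kH$ together with the isomorphism $\Phi(kH)\cong kH\otimes_k kH$, $g\otimes h\mapsto hg^{-1}\otimes g$, one checks that for $1\le i\le q$ the differential $\delta_i$ sends a free generator to a tuple each of whose entries has the shape $\widehat a:=\sum_{t\in H}c_t\,(t^{-1}\otimes t)$ for some $a=\sum_{t\in H}c_t t\in kH$, while $\delta_0$ is the multiplication map. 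The key point is that the image of any such $\widehat a$ in $A\otimes_R A=kG\otimes_{kN}kG$ commutes with $R$: using $rt^{-1}=t^{-1}({}^{t}r)$ and $({}^{t}r)t=tr$ in $kG$ for $t\in H$, $r\in N$, one has
\[ r\,\widehat a=\sum_{t}c_t\,t^{-1}({}^{t}r)\otimes t=\sum_{t}c_t\,t^{-1}\otimes({}^{t}r)t=\sum_{t}c_t\,t^{-1}\otimes tr=\widehat a\,r \]
in $kG\otimes_{kN}kG$. Consequently each $\delta_i$ descends to an $A^e$-homomorphism $d_i\colon(A\otimes_R A)^{m_i}\to(A\otimes_R A)^{m_{i-1}}$ sending the $j$-th free generator to the image of $\delta_i$ of the $j$-th free generator, and $\delta_0$ descends to the multiplication map $d_0\colon A\otimes_R A\to A$; these sit in the square required by $(e)$ with the obvious vertical maps. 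Since the vertical maps are $B^e$-linear and $\delta_{i-1}\delta_i=0$, comparing on the $B^e$-generators gives $d_{i-1}d_i=0$, so $(A\otimes_R A)^{m_{q-1}}\to\cdots\to A\to 0$ is a complex, establishing $(e)$; and $(d)$ is exactly the displayed $R$-centrality applied to $\delta_q(1)$, whose entries are again of the form $\widehat a$.

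With Assumption~2 verified, Theorem~\ref{auto-equivalence 2} applies and gives the stable auto-equivalence $-\otimes_A M_q$ of $A$. I expect the step through $(d)$ and $(e)$ to be the main obstacle: one has to recognize the special shape $\sum_{t}c_t\,t^{-1}\otimes t$ of the generators produced by the $\Phi$-construction and exploit the twisted commutation $hr=({}^{h}r)h$ in $kG$. This is precisely what can fail for a general subalgebra $B$ — the differentials of a $B^e$-resolution need not descend along $B\otimes_k B\to A\otimes_R A$ — and is the reason the semidirect-product situation must be treated through the weaker Assumption~2 of Section~4 rather than Assumption~1.
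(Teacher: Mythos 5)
Your proposal is correct and follows essentially the same route as the paper: verify $(a')$ via conjugation preserving $\mathrm{rad}(kN)$, verify $(b)$ via the double coset decomposition of $kG$, obtain $(c)$ from Lemma \ref{periodic resolution}, and establish $(d)$ and $(e)$ by unwinding the functor $\Phi(-)=(-)\otimes_k kH$ to see that the differentials have entries of the form $\sum_t c_t\,t^{-1}\otimes t$, whose images in $kG\otimes_{kN}kG$ centralize $kN$ by the twisted commutation $hn=\eta(h)(n)h$. This is precisely the paper's argument, so no further comparison is needed.
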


\begin{proof}
Since $N$ is a subgroup of $G$, $A_R$ is projective. We need to check that the triple $(A,R,B)$ satisfies the assumptions $(a')$ to $(e)$ at the beginning of Section 4.

Suppose the semidirect product $N\rtimes H$ is defined by the group homomorphism $\eta:H\rightarrow \mathrm{Aut}(N)$. For any $\sum_{n\in N}\lambda_n n\in radR$ and $h\in H$, the group automorphism $\eta(h):N\rightarrow N$ induces an automorphism $\eta_h$ of $R$, and $h(\sum_{n\in N}\lambda_n n)=\sum_{n\in N}\lambda_n \eta(h)(n)h=\eta_{h}(\sum_{n\in N}\lambda_n n)h$. Since $\eta_{h}(radR)=radR$, $\eta_{h}(\sum_{n\in N}\lambda_n)\in radR$. Therefore $B(radR)\subseteq (radR)B$. Similarly, it can be shown that $(radR)B\subseteq B(radR)$. So the assumption $(a')$ holds.

The $R$-homomorphism $\phi$ is given by $kH\otimes_{k}(kN/radkN)\rightarrow (kN/radkN)\otimes_{kN}kG$, $h\otimes\overline{n}\mapsto \overline{1}\otimes hn$. We have $(kN/radkN)\otimes_{kN}kG\cong (kN/radkN)\otimes_{kN}kN_{G}(N)\oplus(\oplus_{i=1}^{t}(kN/radkN)\otimes_{kN}kNg_i N)$ as $R$-modules, where each $g_i$ belongs to $G-N_{G}(N)$ such that $G-N_{G}(N)$ is a disjoint union of all $Ng_i N$s. Since $|Ng_i N|=|N|^2$, $kNg_i N\cong R\otimes_{k}R$ as $R^e$-modules, so each $(kN/radkN)\otimes_{kN}kNg_i N$ is a projective $R$-module. Moreover, the image of $\phi$ is $(kN/radkN)\otimes_{kN}kN_{G}(N)$. Since $(kN/radkN)\otimes_{kN}kN_{G}(N)\cong\oplus_{h\in H}(kN/radkN)\otimes_{kN}kNh$, $\mathrm{dim}_{k}((kN/radkN)\otimes_{kN}kN_{G}(N))=|H|\mathrm{dim}_{k}(kN/radkN)=\mathrm{dim}_{k}(kH\otimes_{k}(kN/radkN))$, so $\phi$ induces an $R$-isomorphism from $kH\otimes_{k}(kN/radkN)$ to $(kN/radkN)\otimes_{kN}kN_{G}(N)$. Therefore $\phi$ is an isomorphism in $\underline{\mathrm{mod}}$-$R$ and the assumption $(b)$ holds.

Since the trivial $kH$-module $k$ has a periodic free resolution, by Lemma \ref{periodic resolution} the $kH$-$kH$-bimodule $kH$ also has a periodic free resolution. Then the assumption $(c)$ holds. Assume the periodic free resolution of the trivial $kH$-module $k$ is given by the exact sequence $0\rightarrow k\rightarrow F_{n-1}\rightarrow\cdots\rightarrow F_1\rightarrow F_0\rightarrow k\rightarrow 0$, where $F_0$, $\cdots$, $F_{n-1}$ are free $kG$-modules. Then the exact sequence $0\rightarrow \Phi(k)\rightarrow \Phi(F_{n-1})\rightarrow\cdots\rightarrow \Phi(F_1)\rightarrow \Phi(F_0)\rightarrow \Phi(k)\rightarrow 0$ gives a periodic free resolution of the $kH$-$kH$-bimodule $kH$, where $\Phi=-\otimes_{k}kH$ is the functor defined in the proof of Lemma \ref{periodic resolution}.

Let $f:kH\rightarrow kH$, $1\mapsto\sum_{h\in H}\lambda_h h$ be a morphism in $\mathrm{mod}$-$kH$, then $\Phi(f)$ is isomorphic to the $kH$-$kH$-homomorphism $\widetilde{f}:kH\otimes_{k}kH\rightarrow kH\otimes_{k}kH$, $1\otimes 1\mapsto\sum_{h\in H}\lambda_h h^{-1}\otimes h$, by the isomorphism $\Phi(kH)\rightarrow kH\otimes_{k}kH$, $g\otimes h\mapsto hg^{-1}\otimes g$. Since for any $n\in N$, $(\sum_{h\in H}\lambda_h h^{-1}\otimes h)n=\sum_{h\in H}\lambda_h h^{-1}\otimes \eta(h)(n)h=\sum_{h\in H}\lambda_h h^{-1}\eta(h)(n)\otimes h=\sum_{h\in H}\lambda_h \eta(h^{-1})(\eta(h)(n))h^{-1}\otimes h=n(\sum_{h\in H}\lambda_h h^{-1}\otimes h)$ in $kG\otimes_{kN}kG$, there is a $kG$-$kG$-homomorphism $\alpha: kG\otimes_{kN}kG\rightarrow kG\otimes_{kN}kG$ such the diagram
$$\xymatrix{
kH\otimes_{k}kH\ar[r]^{\widetilde{f}}\ar[d] & kH\otimes_{k}kH\ar[d] \\
kG\otimes_{kN}kG\ar[r]^{\alpha} & kG\otimes_{kN}kG \\
 }$$
commutes, where the vertical morphisms are the obvious one. Moreover, for any $kH$-homomorphism $g:kH\rightarrow k$, $1\mapsto \lambda$, $\Phi(f)$ is isomorphic to the $kH$-$kH$-homomorphism $\widetilde{g}:kH\otimes_{k}kH\rightarrow kH$, $1\otimes 1\mapsto\lambda$. Therefore there is a $kG$-$kG$-homomorphism $\beta: kG\otimes_{kN}kG\rightarrow kG$ such the diagram
$$\xymatrix{
kH\otimes_{k}kH\ar[r]^(0.6){\widetilde{g}}\ar[d] & kH\ar@{=}[d] \\
kG\otimes_{kN}kG\ar[r]^(0.6){\beta} & kG \\
 }$$
commutes. Since each $F_i$ is a free $kH$-module, the assumption $(e)$ holds.

Each $kH$-homomorphism $u:k\rightarrow kH$ maps $1$ to some $\lambda(\sum_{h\in H}h)$, where $\lambda\in k$. Then $\Phi(u)$ is isomorphic to the $kH$-$kH$-homomorphism $\widetilde{u}:kH\rightarrow kH\otimes_{k}kH$, $1\mapsto\lambda(\sum_{h\in H}h^{-1}\otimes h)$. Since for every $n\in N$, $(h^{-1}\otimes h)n=h^{-1}\otimes\eta(h)(n)h= h^{-1}\eta(h)(n)\otimes h=\eta(h^{-1})(\eta(h)(n))h^{-1}\otimes h=n(h^{-1}\otimes h)$ in $kG\otimes_{kN}kG$, the image $x$ of $\widetilde{u}(1)$ in $kG\otimes_{kN}kG$ satisfies $rx=xr$ for every $r\in R=kN$. Therefore the assumption $(d)$ holds.
\end{proof}

Suppose the trivial $kH$-module $k$ has a periodic free resolution of periodic $n$, then by Lemma \ref{periodic resolution}, $B=kH$ also has a periodic free resolution of periodic $n$. Let $\rho$ be the stable auto-equivalence of $A=kG$ in Theorem \ref{auto-equivalence 2} with respect to this periodic free resolution of $B$. Similar to Proposition \ref{relative syzygy}, we have following proposition.

\begin{Prop}\label{relative syzygy 2}
For the trivial $kG$-module $k$, $\rho(k)\cong \Omega_{G/N}^{n}(k)$, where $\Omega_{G/N}(M)$ denotes the kernel of some relatively $kN$-projective cover of $M$.
\end{Prop}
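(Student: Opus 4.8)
The plan is to adapt the proof of Proposition \ref{relative syzygy}, with the trivial modules over $kG$, $kN$, $kH$ playing the role of the simple modules over the elementary local algebras there. Write $q$ for the period of the free $B^e=kH^e$-resolution (\ref{eq: B-B-complex}) produced from the hypothesis by Lemma \ref{periodic resolution}, so $q=n$ and $\rho=-\otimes_A M_q$; let $k$ denote the trivial $kG$-module, whose restrictions to $R=kN$ and $B=kH$ are the corresponding trivial modules.

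First I would apply $k\otimes_A-$ to the triangles $M_{l+1}\to(A\otimes_R A)^{m_l}\xrightarrow{f_l} M_l\to$ ($0\le l\le q-1$, $M_0=A$, $f_0=d_0$) of Section 4. Since each defining short exact sequence of $A^e$-modules splits on the left, applying $k\otimes_A-$ gives short exact sequences of $kG$-modules, hence triangles
$$k\otimes_A M_{l+1}\to (k\otimes_{kN}kG)^{m_l}\xrightarrow{\underline{1\otimes f_l}} k\otimes_A M_l\to$$
in $\underline{\mathrm{mod}}$-$kG$ with $k\otimes_A M_0=k$ and $k\otimes_A M_q=\rho(k)$. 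Here $k\otimes_{kN}kG$ is the permutation module $k[N\backslash G]$; it is relatively $kN$-projective, and by Mackey's formula together with the hypothesis $|NgN|=|N|^2$ for $g\notin N_G(N)$ its restriction to $kN$ is $k^{[N_G(N):N]}$ plus a free $kN$-module, hence a multiple of the trivial module in $\underline{\mathrm{mod}}$-$kN$.

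The core step is to show that each $\underline{1\otimes f_l}$ is a split epimorphism in $\underline{\mathrm{mod}}$-$kN$. I would do this as in Step 1.1 of the proof of Theorem \ref{auto-equivalence}: applying $k\otimes_{kH}-$ to the $kH^e$-resolution and $k\otimes_A-$ to the $A^e$-complex $(A\otimes_R A)^{m_{q-1}}\to\cdots\to(A\otimes_R A)^{m_0}\to A$ gives a commutative ladder whose lower row is $(k\otimes_{kN}kG)^{m_{q-1}}\to\cdots\to(k\otimes_{kN}kG)^{m_0}\to k$, whose upper row is (isomorphic to) the periodic free $kH$-resolution $0\to k\to(kH)^{m_{q-1}}\to\cdots\to(kH)^{m_0}\to k\to 0$ of the trivial module, and whose vertical maps are induced by the restriction $\phi'\colon kH\to k\otimes_{kN}kG$, $h\mapsto\overline{1}\otimes h$, of $\phi$ to the trivial summand. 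By the description of $\phi$ in the proof of Proposition \ref{auto-equivalence given by semidirect product}, $\phi'$ is a split monomorphism of $kN$-modules onto $k\otimes_{kN}kN_G(N)$ with free cokernel, hence an isomorphism in $\underline{\mathrm{mod}}$-$kN$; and transporting the $kN$-structure through $\phi'$ makes the upper row a complex of $kN$-modules on which $\mathrm{rad}(kN)$ acts trivially, i.e.\ a complex over the semisimple algebra $kN/\mathrm{rad}(kN)$, which is therefore split exact. Decomposing it into split short exact sequences and transporting along the stable isomorphisms $(\phi')^{m_l}$, exactly as with the $\alpha_l$ of Step 1.1, shows that each triangle $k\otimes_A M_{l+1}\to(k\otimes_{kN}kG)^{m_l}\to k\otimes_A M_l\to$ is split in $\underline{\mathrm{mod}}$-$kN$, i.e.\ $\underline{1\otimes f_l}$ is a split epimorphism there.

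Granting this, the rest copies the end of the proof of Proposition \ref{relative syzygy}: for every $kN$-module $X$ the induction--restriction adjunction identifies $\underline{\mathrm{Hom}}_{kG}(X\otimes_{kN}kG,\underline{1\otimes f_l})$ with $\underline{\mathrm{Hom}}_{kN}(X,\underline{1\otimes f_l})$, so $\underline{1\otimes f_l}$ is a right approximation by relatively $kN$-projective modules; adjoining the projective cover $\pi_l\colon P_l\to k\otimes_A M_l$ makes $(1\otimes f_l,\pi_l)\colon(k\otimes_{kN}kG)^{m_l}\oplus P_l\to k\otimes_A M_l$ a relatively $kN$-projective cover, whence $k\otimes_A M_{l+1}\cong\Omega_{G/N}(k\otimes_A M_l)$. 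Iterating from $l=q-1$ down to $l=0$ gives $\rho(k)=k\otimes_A M_q\cong\Omega_{G/N}^{\,q}(k)=\Omega_{G/N}^{\,n}(k)$. The one genuine obstacle is the split-epimorphism claim; concretely it comes down to choosing the $kN$-module structures that simultaneously make $\phi'$ a stable isomorphism and the upper ladder row split exact, after which everything follows from arguments already carried out for Theorem \ref{auto-equivalence} and Proposition \ref{relative syzygy}.
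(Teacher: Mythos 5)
Your proposal is correct and follows essentially the same route as the paper: the paper's proof consists precisely of verifying that the map $\psi\colon kH\to k\otimes_{kN}kG$, $h\mapsto 1\otimes h$ (your $\phi'$), is a $kN$-homomorphism for the trivial $N$-action and a stable isomorphism, and then invoking the argument of Proposition \ref{relative syzygy} verbatim, which is exactly the transport-of-split-triangles and relatively-projective-cover argument you spell out.
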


\begin{proof}
Consider $B=kH$ as a module over $R=kN$, where each $n\in N$ acts trivially on $B$. Let $\psi:B\rightarrow k\otimes_{R}A$, $h\mapsto 1\otimes h$ be a $k$-linear homomorphism, where $k$ denotes the trivial $R$-module. Since for any $h\in H$ and $n\in N$, $(1\otimes h)n=1\otimes hn=1\otimes \eta(h)(n)h=1\otimes h$ in $k\otimes_{R}A$, $\psi$ is also an $R$-homomorphism. Since $k\otimes_{R}A\cong k\otimes_{kN}kN_{G}(N)\oplus(\oplus_{i=1}^{t}k\otimes_{kN}kNg_i N)$ as $R$-modules, where each $g_i$ belongs to $G-N_{G}(N)$ such that $G-N_{G}(N)$ is a disjoint union of all $Ng_i N$s, $\psi$ is an isomorphism in $\underline{\mathrm{mod}}$-$R$. The rest of the proof is similar to that of Proposition \ref{relative syzygy}.
\end{proof}

\begin{Ex1}
Let $k$ be a field of characteristic $2$ which contains cubic roots of unity, $G=S_4$ be the symmetric group on $4$ letters, and $A=kG$. Let $e_1=1+(123)+(132)$, $e_2=1+\omega(123)+\omega^{2}(132)$, $e_3=1+\omega^{2}(123)+\omega(132)$ be three idempotents of $A$, where $\omega\in k$ is a cubic root of unity. Then $1=e_1+e_2+e_3$ is a decomposition of $1$ into primitive orthogonal idempotents. The basic algebra of $A$ is $\Lambda=fAf$, where $f=e_1+e_2$. It can be shown that $\Lambda$ is given by the quiver
$$\xymatrix{
1\ar@/^/[r]^{\alpha}\ar@(dl,ul)^{\delta} & 2\ar@/^/[l]^{\beta}\ar@(ur,dr)^{\gamma} \\
 }$$
with relations $\alpha\beta=\delta^2=\gamma\alpha=\gamma\beta=0$ and $\alpha\delta\beta=\gamma^2$.

$(i)$ Let $S=\langle (12)\rangle$ be a subgroup of $G$, then $N_{G}(S)=C_{G}(S)=S\times L$, where $L=\langle (34)\rangle$. By Lemma \ref{double coset}, $|SgS|=|S|^2$ for any $g\in G-N_{G}(S)$. Let $R=kS$, $B=kL$. Since the trivial $B$-module $k$ satisfies $\Omega_{B}(k)\cong k$, by Proposition \ref{auto-equivalence given by semidirect product}, the triple $(A,R,B)$ defines a stable auto-equivalence $\rho$ of $A$. Moreover, $\rho$ is induced by the functor $-\otimes_{A}K$, where $K$ is the kernel of the $A^e$-homomorphism $A\otimes_{R}A\rightarrow A$, which is given by multiplication. Since $\Lambda$ is Morita equivalent to $A$, the stable auto-equivalence $\rho$ induces a stable auto-equivalence $\mu$ of $\Lambda$. It can be shown that $\mu(1)=$ \xymatrix@R=0.0pc@C=0.0pc {
	2  \\
	1   \\
}
and $\mu(2)=\Omega_{\Lambda}(2)=$ \xymatrix@R=0.0pc@C=0.0pc {
	1&&& \\
	&1&&2   \\
    &&2& \\
}.

$(ii)$ Let $N=\{(1),(12),(34),(12)(34)\}$ be a subgroup of $G$, then \\ $N_{G}(N)=\{(1),(12),(34),(12)(34),(13)(24),(1324),(14)(23),(1423)\}=N\rtimes H$, where \\ $H=\langle (13)(24)\rangle$. A calculation shows that $G=N_{G}(N)\cup N(13)N$, where $|N(13)N|=|N|^2$. Let $R'=kN$, $B'=kH$. Since the trivial $B'$-module $k$ satisfies $\Omega_{B'}(k)\cong k$, by Proposition \ref{auto-equivalence given by semidirect product}, the triple $(A,R',B')$ defines a stable auto-equivalence $\rho'$ of $A$. Moreover, $\rho'$ is induced by the functor $-\otimes_{A}K'$, where $K'$ is the kernel of the $A^e$-homomorphism $A\otimes_{R'}A\rightarrow A$, which is given by multiplication. Let $\mu'$ be the stable auto-equivalence of $\Lambda$ induced by $\rho'$. It can be shown that $\mu'(1)=$ \xymatrix@R=0.0pc@C=0.0pc {
	&2&  \\
	1&&2  \\
}
and $\mu'(2)=\Omega_{\Lambda}^{-2}(2)=$ \xymatrix@R=0.0pc@C=0.0pc {
	&&1&&2&& \\
	&1&&2&&1&   \\
    2&&&&&&1 \\
}.

$(iii)$ Let $P=\langle (1324)\rangle$ be a subgroup of $G$, then \\ $N_{G}(P)=\{(1),(12),(34),(12)(34),(13)(24),(1324),(14)(23),(1423)\}=P\rtimes Q$, where \\ $Q=\langle (12)\rangle$. We have $G=N_{G}(P)\cup P(13)P$, where $|P(13)P|=|P|^2$. Let $R''=kP$, $B''=kQ$. Similar to case (2) above, the triple $(A,R'',B'')$ defines a stable auto-equivalence $\rho''$ of $A$, which is induced by the functor $-\otimes_{A}K''$, where $K''$ is the kernel of the $A^e$-homomorphism $A\otimes_{R''}A\rightarrow A$. Let $\mu''$ be the stable auto-equivalence of $\Lambda$ induced by $\rho''$, then $\mu''(1)=$ \xymatrix@R=0.0pc@C=0.0pc {
	&2&  \\
	1&&2  \\
}
and $\mu''(2)=\Omega_{\Lambda}^{-2}(2)=$ \xymatrix@R=0.0pc@C=0.0pc {
	&&1&&2&& \\
	&1&&2&&1&   \\
    2&&&&&&1 \\
}, which is same as Case $(ii)$.
\end{Ex1}

\subsection{}

In this subsection, we consider a class of non-local Brauer graph algebras and construct stable auto-equivalences over them. In general, such stable auto-equivalences are not induced by derived auto-equivalences.
\begin{Ex1}
Let $A$ be the Brauer graph algebra given by the Brauer graph
\begin{tikzpicture}
\draw (0,0) circle (0.5);
\draw (1,0) circle (0.5);
\fill (0.5,0) circle (0.5ex);
\node at(0.75,0) {n};
\draw (0.6,-0.15) rectangle (0.9,0.15);
\end{tikzpicture}, where $n\geq 1$.
Then $A$ is given by the quiver
$$\xymatrix{
1\ar@/^/[r]^{\gamma}\ar@(dl,ul)^{\alpha} & 2\ar@/^/[l]^{\delta}\ar@(ur,dr)^{\beta} \\
 }$$
with relations $(\alpha\delta\beta\gamma)^{n}=(\delta\beta\gamma\alpha)^{n}$, $(\beta\gamma\alpha\delta)^{n}=(\gamma\alpha\delta\beta)^{n}$, $\alpha^2=\delta\gamma=\beta^2=\gamma\delta=0$. Let $R=k[\alpha]\times k[\beta]$, $B=k[x]$ be two subalgebras of $A$, where $x=(\delta\beta\gamma\alpha)^{n-1}\delta\beta\gamma+(\gamma\alpha\delta\beta)^{n-1}\gamma\alpha\delta$. The triple $(A,R,B)$ satisfies Assumption 1 in Section 3.

$(1)$ If $char(k)=2$, then $B$ has a periodic free $B^e$-resolution $0\rightarrow B\rightarrow B\otimes_{k}B\xrightarrow{\mu} B\rightarrow 0$ of period $1$, where $\mu$ is the map given by multiplication. According to Theorem \ref{auto-equivalence}, the functor $-\otimes_{A}K$ induces a stable auto-equivalence of $A$, where $K$ is the kernel of the $A^e$-homomorphism $A\otimes_{R}A\rightarrow A$ given by multiplication. Let $S_i$ be the simple $A$-module which corresponds to the vertex $i$. A calculation shows that $S_1\otimes_{A}K\cong rad(e_1 A/\alpha A)$ and $S_2\otimes_{A}K\cong rad(e_2 A/\beta A)$. Note that neither $S_1\otimes_{A}K$ nor $S_2\otimes_{A}K$ belongs to the $\Omega_{A}$-orbit of any simple $A$-module.

When $n=2$, we have $e_1 A=$ \xymatrix@R=0.0pc@C=0.0pc {
	&1& \\
	1&&2 \\
    2&&2 \\
    2&&1 \\
    1&&1 \\
    1&&2 \\
    2&&2 \\
    2&&1 \\
    &1& \\
} and $e_2 A=$ \xymatrix@R=0.0pc@C=0.0pc {
	&2& \\
	2&&1 \\
    1&&1 \\
    1&&2 \\
    2&&2 \\
    2&&1 \\
    1&&1 \\
    1&&2 \\
    &2& \\
}. Let $X=S_1\otimes_{A}K\cong rad(e_1 A/\alpha A)$, then $X$ is the uniserial $A$-module \xymatrix@R=0.0pc@C=0.0pc {
	2 \\
	2 \\
    1 \\
    1 \\
    2 \\
    2 \\
    1 \\
}. Let $\Lambda=\mathrm{End}_{A}(A\oplus S_1)$ and $\Gamma=\mathrm{End}_{A}(A\oplus X)$. By the construction in \cite[Corollary 1.2]{LX2007}, there is a stable equivalence of Morita type between $\Lambda$ and $\Gamma$. The Cartan matrix $C_{\Lambda}$ of $\Lambda$ is given by \[ C_{\Lambda}= \begin{pmatrix}
8 & 8 & 1 \\
8 & 8 & 0 \\
1 & 0 & 1
\end{pmatrix}, \]
and the Cartan matrix $C_{\Gamma}$ of $\Gamma$ is given by \[ C_{\Gamma}= \begin{pmatrix}
8 & 8 & 3 \\
8 & 8 & 4 \\
3 & 4 & 2
\end{pmatrix}. \]
A calculation shows that $C_{\Lambda}$ is congruent to \[ M=\begin{pmatrix}
-1 & 0 & 0 \\
0 & 8 & 0 \\
0 & 0 & 1
\end{pmatrix} \]
 over integers and $C_{\Gamma}$ is congruent to \[ N=\begin{pmatrix}
0 & 0 & -1 \\
0 & 8 & 0 \\
-1 & 0 & 0
\end{pmatrix} \] over integers. If a matrix \[ \begin{pmatrix}
a_{11} & a_{12} & a_{13} \\
a_{21} & a_{22} & a_{23} \\
a_{31} & a_{32} & a_{33}
\end{pmatrix} \] is congruent to $N$
over integers, then it can be shown that $a_{11}$ is even. Therefore the matrices $M$ and $N$
are not congruent over integers. So the matrices $C_{\Lambda}$ and $C_{\Gamma}$ are also not congruent over integers, which implies that $\Lambda$ and $\Gamma$ are not derived equivalent. According to \cite[Proposition 6.1]{HX2010}, the stable auto-equivalence of $A$ induced by the functor $-\otimes_{A}K$ cannot be lifted to a derived auto-equivalence.

$(2)$ If $k$ is a field of arbitrary characteristic, then $B$ has a periodic free $B^e$-resolution $0\rightarrow B\rightarrow B\otimes_{k}B\xrightarrow{f} B\otimes_{k}B\xrightarrow{\mu} B\rightarrow 0$ of period $2$, where $f(1\otimes 1)=1\otimes x-x\otimes 1$ and $\mu$ is the map given by multiplication. According to Theorem \ref{auto-equivalence}, the functor $-\otimes_{A}K'$ induces a stable auto-equivalence of $A$, where $K'$ is given by the short exact sequence $0\rightarrow K'\rightarrow (A\otimes_{R}A)\oplus P\xrightarrow{(h_1,h_2)} K\rightarrow 0$ of $A^e$-modules. Here $K$ is the kernel of the $A^e$-homomorphism $A\otimes_{R}A\rightarrow A$ given by multiplication, $h_1(1\otimes 1)=1\otimes x-x\otimes 1$, and $h_2:P\rightarrow K$ is the projective cover of $K$ as an $A^e$-module. A calculation shows that $S_1\otimes_{A}K'$ (resp. $S_2\otimes_{A}K'$) is isomorphic to the $A$-module $X_1$ (resp. $X_2$) in $\underline{\mathrm{mod}}$-$A$, where $X_1$ (resp. $X_2$) is given by the short exact sequence $0\rightarrow X_1\rightarrow (e_1 A/\alpha A)\oplus e_2 A\xrightarrow{(u_1,u_2)} rad(e_1 A/\alpha A)\rightarrow 0$ (resp. the short exact sequence $0\rightarrow X_2\rightarrow (e_2 A/\beta A)\oplus e_1 A\xrightarrow{(v_1,v_2)} rad(e_2 A/\beta A)\rightarrow 0$), where $u_1(\overline{e_1})=\overline{(\delta\beta\gamma\alpha)^{n-1}\delta\beta\gamma}$ (resp. $v_1(\overline{e_2})=\overline{(\gamma\alpha\delta\beta)^{n-1}\gamma\alpha\delta}$) and $u_2:e_2 A\rightarrow rad(e_1 A/\alpha A)$ (resp. $v_2:e_1 A\rightarrow rad(e_2 A/\beta A)$) is the projective cover of $rad(e_1 A/\alpha A)$ (resp. $rad(e_2 A/\beta A)$). Note that neither $X_1$ nor $X_2$ belongs to the $\Omega_{A}$-orbit of any simple $A$-module.
\end{Ex1}

\end{document}